\font \mymathbb = bbold10 at 11pt
\newcommand{\one}{\mbox{\mymathbb{1}}}
\newcommand{\beq}{\begin{equation}}
\newcommand{\eeq}{\end{equation}}
\definecolor{my-blue}{rgb}{0.0,0.0,0.6}
\definecolor{my-red}{rgb}{0.5,0.0,0.0}
\definecolor{my-green}{rgb}{0.0,0.5,0.0}
\newcommand\pell{\hat p_\ell} 
\newcommand\ratell{H_{\ell}}  
\newcommand\bigom{\mathbf \Omega} 
\newcommand\range{{\mathcal R}} 
\newcommand\wz{\eta} 
\newcommand\MC{\cQ} 
\newcommand\measures{\cM_1} 
\newcommand\Sopr{S^+}
\newcommand\Ll{\cL} 
\newcommand\Uset{\cU} 
\newcommand\unif{\kU_b} 
\newcommand\pres{\Lambda}  
\newcommand\oneell{\ell} 
\newcommand\gr{{\mathcal G}}
\newcommand*{\Z}{{\mathbb Z}}
\newcommand*{\kS}{{\mathfrak S}}
\newcommand*{\cP}{{\mathcal P}}
\newcommand*{\w}{\omega}
\newcommand*{\E}{{\mathbb E}}
\newcommand*{\cM}{{\mathcal M}}
\newcommand*{\N}{{\mathbb N}}
\newcommand*{\R}{{\mathbb R}}
\newcommand*{\cQ}{{\mathcal Q}}
\providecommand{\abs}[1]{\left\vert#1\right\vert}
\newcommand*{\cU}{{\mathcal U}}
\newcommand*{\kU}{{\mathfrak U}}
\newcommand*{\Q}{{\mathbb Q}}
\renewcommand*{\P}{{\mathbb P}} 
\newcommand*{\xhat}{{\hat x}}
 \newcommand*{\gbar}{{\bar g}}
\newcommand*{\cL}{{\mathcal L}}
\newcommand*{\e}{\varepsilon}
\newcommand*{\zhat}{{\hat z}}
\newcommand*{\bfu}{{\mathbf u}}
\newcommand*{\fl}[1]{\left\lfloor{#1}\right\rfloor}
\newcommand*{\ce}[1]{\left\lceil{#1}\right\rceil}
\newcommand*{\xtil}{{\tilde x}}
\newcommand*{\uhat}{{\hat u}}
\newcommand*{\nn}{\nonumber}
\newcommand*\cA{{\mathcal A}}
\newcommand*\bfv{{\mathbf v}}
\newcommand\ri{\mathrm{ri}\,\,}
\newcommand\conv{\mathrm{co}\,} 
\newcommand\kernel{\mathrm{ker}\,} 
\newcommand\aff{\mathrm{aff}\,} 
\newcommand\ex{\mathrm{ex}\,}
 \newcommand\cH{\mathcal H}
\def\note#1{\textup{\textsf{\color{blue}{\bf(((} #1 {\bf)))}}}}
\begin{document}

\title{Quenched Point-to-Point Free Energy\\ for Random Walks in Random Potentials
}

\titlerunning{Quenched Point-to-Point Free Energy}        

\author{Firas Rassoul-Agha         \and
        Timo Sepp\"al\"ainen 
}


\institute{F.\ Rassoul-Agha \at
               Mathematics Department, University of Utah, 155 South 1400 East, Salt Lake City, UT 84109, USA\\
              \email{firas@math.utah.edu}           
           \and
           T.\ Sepp\"al\"ainen \at
              Mathematics Department, University of Wisconsin-Madison, 419 Van Vleck Hall, Madison, WI 53706, USA\\
              \email{seppalai@math.wisc.edu}
}

\date{Received: February 2012 / Accepted: date}

\maketitle

\begin{abstract}
We consider a  random walk in a random potential on a square
lattice of arbitrary dimension.    The potential is a function of an ergodic
environment and   steps of the walk. The potential   is  subject to a 
moment assumption whose strictness is tied to the mixing of the environment, the
  best case   being the 
i.i.d.\ environment.   
 We prove that the infinite volume quenched point-to-point free energy exists and has a variational formula in terms of  entropy.
We  establish regularity properties of the point-to-point  free energy,  
 and link it  to the infinite volume point-to-line free energy and quenched  large deviations of the  walk.  One corollary is a  quenched  large deviation principle 
for random walk in an ergodic random environment,  with a continuous rate function.  
  
 \keywords{point-to-point \and quenched \and free energy \and large deviation \and random walk \and random environment \and polymer
\and random potential \and RWRE \and RWRP \and directed polymer \and stretched polymer \and entropy \and variational formula}
\subclass{\color{blue}60F10 \and 60K35 \and 60K37  \and 82D60 \and 82B41}
\end{abstract}


\section{Introduction}

This paper studies the limiting free energy of a class of models 
 with  Boltzmann-Gibbs-type distributions on random walk paths. 
The 
energy of a   path  is defined through a coupling of the walk
 with a random environment. 
 Our main interest is   the {\sl directed polymer   in an i.i.d.\ random
environment}, also called the {\sl polymer with  bulk disorder}.  
This model was  
introduced in the statistical 
physics literature by Huse and Henley in 1985 \cite{Hus-Hen-85}.  
For recent surveys see
\cite{Com-Shi-Yos-04, Hol-09}.  
The free energy of these models is a central object of study. 
Its dependence on model parameters gives information about
phase transitions.  In quenched settings the fluctuations  
of the quenched free energy are closely related to the fluctuations of the
path.

Some   properties we develop
can   be proved with  little or no extra cost  more generally.
   The  formulation then consists of a general walk  in a 
potential that can  depend both on an ergodic  environment and on the steps
of the walk.  We call the model random walk in a random potential (RWRP).  

This paper concentrates mainly on   the point-to-point version of   RWRP 
 where the
walk is fixed at two points and allowed to fluctuate in between. 
The point-to-line model was studied in the companion paper
\cite{Ras-Sep-Yil-12-}.   The  motivation for both papers  was that the free energy  
was  known  only as a subadditive limit, with no explicit
formulas.  
We provide two 
variational formulas for the point-to-point free energy. One comes in terms of entropy and 
 we develop it in detail after   preliminary work on 
the    regularity of the free energy.   The other involves
correctors (gradients of sorts) and can be deduced by  
combining a convex duality given in \eqref{pres19} below with
 Theorem 2.3 from \cite{Ras-Sep-Yil-12-}.   
 
Significant  recent progress  has taken place in the realm of 1+1 dimensional
exactly solvable directed polymers (see review \cite{Cor-11-}).  
Work on general models is far behind.   
Here are three future directions    opened up by our  
  results in the  present   work and \cite{Ras-Sep-Yil-12-}.  
  
 (i)   One    goal is to use this theory to access
properties of the limiting  free energy, especially in regimes of strong
disorder where the quenched model and annealed model deviate from each
other.

 (ii) The variational formulas
identify  certain natural corrector functions 
and Markov processes whose investigation should shed light on the
polymer models themselves. Understanding this picture 
for   the exactly solvable log-gamma polymer  \cite{Sep-12-} will be the 
first step.  

(iii)  The zero-temperature limits of polymer models are last-passage
percolation models.  In this limit the free energy turns into the limit shape.
Obtaining  information about limit shapes of percolation models
 has been notoriously 
difficult.  A future direction is to extend the variational formulas 
to the zero-temperature case. 

In the remainder of the introduction we 
describe the model and some examples, 
 give   an overview of the paper, and describe some past literature. 

\medskip

{\bf The RWRP model and examples.}   
 Fix a dimension $d\in\N$. 
Let  $\range\subset\Z^d$ be a finite subset of the square lattice and
let $P$ denote the distribution of the {\sl random walk} on $\Z^d$ started at $0$ and whose transition probability is 
$\hat p_z=1/|\range|$ for $z\in\range$ and $\hat p_z=0$ otherwise.
In other words, the random walk picks its steps uniformly at random from $\range$. 
 $E$   denotes expectation under $P$.   
 $\range$ generates the  
additive  group $\gr= \{\sum_{z\in\range}a_z z:a_z\in\Z\}$. 

An {\sl environment} $\w$ is a sample point from a probability space $(\Omega, \kS, \P)$.
$\Omega$ comes  equipped with a group  $\{T_z:{z\in\gr}\}$   of 
measurable commuting transformations that satisfy 
$T_{x+y}=T_xT_y$ and $T_0$ is the identity.
  $\P$ is a $\{T_z:z\in\gr\}$-invariant probability measure on $(\Omega,\kS)$. 
  This is summarized by the statement that $(\Omega,\kS,\P,\{T_z:z\in\gr\})$Ê
is a measurable dynamical system.   
  As usual 
  $\P$ is {\sl ergodic}  
 if  $T_z^{-1}A=A$ for all $z\in\range$ implies   $\P(A)=0$ or $1$, 
for events $A\in\kS$.
A stronger assumption of {\sl total ergodicity} 
  says that   $\P(A)=0$ or $1$  whenever 
$T_z^{-1}A=A$ for some extreme point $z$ of the convex hull of $\range$.
   $\E$ will denote expectation relative to $\P$.

A  {\sl potential}  is  
  a measurable function $g:\Omega\times\range^\ell\to\R$ for
some integer $\ell\ge0$. The case $\ell=0$ means that 
$g=g(\w)$, a function of $\w$ alone. 

 Given an environment $\w$ and an integer $n\ge1$
define the {\sl quenched polymer measure} 
\beq Q^{g,\w}_{n}(A) 
	=\frac1{Z_{n}^{g,\w}}E\bigl[e^{\sum_{k=0}^{n-1}g(T_{X_k}\w, \,Z_{k+1,k+\ell})}\one_A(\w, X_{0,\infty})\bigr],
	\label{Q^g}\eeq 
where $A$ is an event on environments and paths and 
 	\[Z_{n}^{g,\w}=E\big[e^{\sum_{k=0}^{n-1}g(T_{X_k}\w,\, Z_{k+1,k+\ell})}\big]\]
is the  normalizing constant called the {\sl quenched partition function}.
This model we call {\sl random walk in a random potential} (RWRP).   Above
 $Z_k=X_k-X_{k-1}$ is a random walk step and $Z_{i,j}=(Z_i,\dotsc,Z_j)$ a vector of steps.  
 Similar notation  will be used for all finite and infinite vectors and path segments, including 
 $X_{k,\infty}=(X_k, X_{k+1}, \dotsc)$ and $z_{1,\ell}=(z_1,\dotsc, z_\ell)$ used above. 
 Note that in general the measures $Q^{g,\w}_{n}$
defined in \eqref{Q^g} are not consistent as $n$ varies.  
Here are some key examples of the setting.  

\begin{example}[I.I.D.\ environment.]\label{ex:product}
A natural   setting  is the one where $\Omega=\Gamma^{\Z^d}$
is  a product space with generic points $\w=(\w_x)_{x\in\Z^d}$ and  translations $(T_x\w)_y=\w_{x+y}$,
the coordinates $\w_x$ are i.i.d.\ under $\P$,   
  and $g(\w,z_{1,\ell})$ a local function of $\w$, which means that 
  $g$ depends on only finitely many coordinates $\w_x$.  This is a totally ergodic case. 
 In this setting $g$ has the {\sl $r_0$-separated i.i.d.\ property}
 for some positive integer $r_0$. 
By this we mean that  if $x_1,\dotsc, x_m\in\gr$   satisfy
$\abs{x_i-x_j}\ge r_0$ for $i\ne j$, then 
  the  $\R^{\range^\ell}$-valued random vectors    
$\{ \bigl(g(T_{x_i}\w, z_{1,\ell})\bigr)_{z_{1,\ell}\in\range^\ell}: 1\le i\le m\}$ 
are i.i.d.\ under $\P$.  
     \end{example}

\begin{example}[Strictly directed walk and local potential in  i.i.d.\ environment.] 
A specialization of Example \ref{ex:product} where  
 $0$ lies  outside the convex hull of $\range$.
This is equivalent to the existence of  $\uhat\in\Z^d$ such that $\uhat\cdot z>0$ for all $z\in\range$.
\label{ex:dir-iid} \end{example}

\begin{example}[Stretched polymer.]  A stretched polymer has an external field $h\in\R^d$
that biases the walk, so the potential is 
$g(\w,z)=\Psi(\w)+h\cdot z$.  See the survey paper \cite{Iof-Vel-12-} and its references   for the state of the art on stretched polymers
in a product potential.
\label{ex:stretch}\end{example}

\begin{example}[Random walk in random environment.]\label{ex:rwre}
To cover RWRE   take $\ell=1$ and $g(\w,z)=\log p_z(\w)$ where
$(p_z)_{z\in\range}$ is a measurable mapping from $\Omega$ into $\cP=\{(\rho_z)_{z\in\range} \in[0,1]^\range:\sum_z \rho_z=1\}$, the space of
probability distributions on $\range$.  The 
{\sl quenched path measure}  $Q^\w_0$ of RWRE
started at $0$ is the probability measure on the path space $(\Z^d)^{\Z_+}$ 
  defined by the initial condition $Q^\w_0(X_0=0)=1$ and 
the transition probability   $Q^\w_0(X_{n+1}=y\vert X_n=x)
=p_{y-x}(T_x\w)$.    The 
$(X_0,\dotsc,X_n)$-marginal of the polymer measure   $Q^{g,\w}_n$ 
in \eqref{Q^g} 
  is the marginal of the quenched path measure   $Q^\w_0$.
\end{example}

{\bf Overview of the paper.} 
 Under some assumptions article  \cite{Ras-Sep-Yil-12-}  proved   the 
$\P$-almost sure existence of the limit
\beq   \pres_\ell(g) = \lim_{n\to\infty} n^{-1} \log E\big[e^{\sum_{k=0}^{n-1}g(T_{X_k}\w,Z_{k+1,k+\ell})}\big] . \label{pres-8.4} \eeq
In different contexts this is called 
  the  {\sl limiting logarithmic moment generating function},  the {\sl pressure},
or  the {\sl free energy}.   One of the main results of \cite{Ras-Sep-Yil-12-}
was the variational characterization 
\beq  \pres_\ell(g)=\sup_{\substack{\mu\in\measures(\bigom_\ell), c>0}}
\bigl\{E^\mu[\min(g,c)]-\ratell(\mu)\bigr\}.   
\label{pr-ent-4}
\eeq
$\measures(\bigom_\ell)$ is the space of probability measures on $\bigom_\ell=\Omega\times\range^\ell$ 
and $\ratell(\mu)$ is an entropy, defined in \eqref{Helldef} below.  

 In the present 
paper  we study  the 
quenched {\sl point-to-point free energy}   
\beq \pres_\ell(g,\zeta)=\lim_{n\to\infty} n^{-1} \log E\big[e^{\sum_{k=0}^{n-1}g(T_{X_k}\w,Z_{k+1,k+\ell})}\one\{X_n= \xhat_n(\zeta)\}\big]\label{pres-8.6}\eeq
where $\zeta\in\R^d$ and  $\xhat_n(\zeta)$ is a lattice point that approximates $n\zeta$.
 Our main result is a variational characterization of $\pres_\ell(g,\zeta)$
 which is identical  
 to \eqref{pr-ent-4}, except that now   the supremum is over distributions $\mu$ on $\bigom_\ell$
 whose mean velocity for the path is $\zeta$.    For
  directed walks in i.i.d. environments this   is   Theorem \ref{th:pr-ent2}
 in Section \ref{iid-sec}.

We begin  in Section \ref{sec:free energy} with the existence of 
$\pres_\ell(g,\zeta)$  and  
  regularity  in  $\zeta$.  
A by-product is   an  independent proof of the limit \eqref{pres-8.4}.
We  relate
$ \pres_\ell(g)$ and $\pres_\ell(g,\zeta)$ to each other in a couple different ways. 
This relationship yields a second  variational formula for $\pres_\ell(g,\zeta)$. 
Combining  convex duality \eqref{pres19}  with
 Theorem 2.3 from \cite{Ras-Sep-Yil-12-} gives 
  a   variational formula for $\pres_\ell(g,\zeta)$
that involves tilts and  corrector functions rather than measures.  
 
  Section \ref{sec:cont}   proves further regularity properties for the 
i.i.d.\ strictly directed case:   continuity of $\pres_\ell(g,\zeta)$   in $\zeta$  and  $L^p$ continuity ($p>d$)     in $g$. 

  Section \ref{ld-sec} is for large deviations.  
Limits \eqref{pres-8.4} and \eqref{pres-8.6} give  a 
 quenched  large deviation principle for the distributions 
  $Q^{g,\w}_{n}\{X_n/n\in \cdot\,\}$, with  rate function  $I^g(\zeta) =  \pres_\ell(g)-\pres^{\mathrm{usc}(\zeta)}_\ell(g,\zeta)$
where  $\pres^{\mathrm{usc}(\zeta)}_\ell(g,\zeta)$ is the
upper semicontinuous regularization.  This rate function is continuous 
on the convex hull of $\range$.  
  We specialize the LDP    to
  RWRE  and give an overview of past work on quenched large deviations for   RWRE.  
 
 Section  \ref{iid-sec} develops the entropy representation   
 of $\pres_\ell(g,\zeta)$  for the i.i.d.\ strictly directed
 case.   The general case can be found in the preprint version \cite{Ras-Sep-12-arxiv}. 
   The LDP is the key, 
  through a contraction principle.  
 
Our  results are valid for   unbounded potentials,  provided we have   
  control of the mixing of the environment.  
   When shifts of the potential are strongly mixing,   $g\in L^p$ 
 for $p$ large enough suffices.  In particular, for an i.i.d.\ environment and stricly directed walks,  the assumption is
 that $g$ is local in its dependence on $\w$ and $g(\cdot\,,z_{1,\ell})\in L^p(\P)$ for
 some $p>d$.  

  Section \ref{L2-sec} illustrates the theory 
for     a directed polymer in an i.i.d.\ environment
in the $L^2$ region (weak disorder, dimension $d\ge 3$). 
The variational formula is solved by an RWRE  in a correlated environment, 
and a   tilt (or   ``stretch'' as in Example 
\ref{ex:stretch}) appears as the dual variable of the velocity $\zeta$.

 \medskip
 
{\bf Literature and past results.}  Standard  references for   RWRE are
\cite{Bol-Szn-02-dmv},  \cite{Szn-04} and  \cite{Zei-04},  
 and for RWRP   \cite{Com-Shi-Yos-04}, 
\cite{Hol-09} and  
\cite{Szn-98}. 
  RWRE large deviations literature is recounted
 in Section \ref{ld-sec}
after Theorem \ref{rwre-ldp|}.   Early forms of our variational formulas 
appeared in  position-level large deviations for RWRE
in \cite{Ros-06}. 

A notion related to the 
 free energy is 
the  {\sl Lyapunov exponent}    defined by 
\[\lim_{n\to\infty} n^{-1}\log E\Big[e^{\sum_{k=0}^{\tau(\xhat_n(\zeta))-1}g(T_{X_k}\w,Z_{k+1,k+\ell})}\one\{\tau(\xhat_n(\zeta))<\infty\}\Big]\]
where $\tau(x)=\inf\{k\ge0:X_k=x\}$.
Results   on   Lyapunov exponents  and the quenched level 1 LDP
for nearest-neighbor polymers in  i.i.d.\ random potentials have been proved by 
Carmona and Hu \cite{Car-Hu-04}, Mourrat \cite{Mou-12} and  Zerner \cite{Zer-98-aap}.
Some of the ideas originate in Sznitman  \cite{Szn-94} and 
Varadhan \cite{Var-03-cpam}.

  Our treatment resolves some regularity issues
of the level 1 rate function raised by Carmona and Hu \cite[Remark 1.3]{Car-Hu-04}.  
We require $g$ to be  finite, so for example walks on percolation
clusters are ruled out.   
 Mourrat \cite{Mou-12} proved a  level 1 LDP for simple random walk in an i.i.d.\ potential
$g(\w_0)\le 0$ that permits $g=-\infty$ as long as $g(\w_x)>-\infty$ percolates.  

The directed i.i.d.\ case of Example \ref{ex:dir-iid} in dimension $d=2$,  with a potential $g(\w_0)$ 
subject to some moment assumptions,  
is expected to be a member of the KPZ universality class (Kardar-Parisi-Zhang). 
The universality conjecture is that the centered and normalized  point-to-point free
energy should converge to the Airy$_2$ process.  
 At present such universality remains unattained. 
Piza \cite{Piz-97} proved in some generality that   fluctuations of the point-to-point
free energy diverge at least logarithmically.   
  Among  the lattice models
studied in this paper one is known to be exactly solvable, namely the log-gamma polymer introduced in  \cite{Sep-12-} and further studied in
 \cite{Cor-etal-11-,Geo-Sep-12-}.  For that model the KPZ conjecture
is partially proved:   correct fluctuation exponents were 
verified in some cases in \cite{Sep-12-}, and the Tracy-Widom GUE limit 
  proved in some cases in \cite{Bor-Cor-Rem-12-}.   
KPZ universality results are further along for zero temperature polymers
(oriented percolation or last-passage percolation type models).  
Article   \cite{Cor-11-} is a recent survey of these developments.

 \medskip
 
{\bf Notation and conventions.}    
On a product space  $\Omega=\Gamma^{\Z^d}$
  with generic points $\w=(\w_x)_{x\in\Z^d}$, a {\sl local} function $g(\w)$ is a function
of only finitely many coordinates $\w_x$. 
$\E$ and $\P$ refer to the background measure on the environments $\w$.  
For the  set $\range\subset\Z^d$ of admissible steps we define $M=\max\{\abs z:z\in\range\}$,
and  denote its convex hull in $\R^d$  by $\Uset=\{ \sum_{z\in\range} a_z z: 0\le a_z\in\R,\,\sum_z a_z=1\}$.  
The steps of an admissible path $(x_k)$ are $z_k=x_k-x_{k-1}\in\range$. 

In general, the convex hull of a set $\mathcal I$ is $\conv\mathcal I$.
A convex set   $\mathcal C$ has its relative interior   $\ri\mathcal C$,
its set of extreme points $\ex\mathcal C$,  and its affine hull 
  $\aff\mathcal C$.  
  The upper semicontinuous regularization of a function $f$ is denoted by
$f^{\text{usc}}(x)=\inf_{\text{open}\,B\ni x} \sup_{y\in B} f(y)$ with an analogous definition
for  $f^{\text{lsc}}$.    $E^\mu[f]=\int f\,d\mu$ denotes expectation under the measure $\mu$.   As usual,  $\N=\{1,2, 3,\dotsc\}$ and  $\Z_+=\{0,1,2,\dotsc\}$. 
  $x\vee y=\max(x,y)$ and $x\wedge y=\min(x,y)$.

\section{Existence and regularity of the quenched point-to-point free energy}\label{sec:free energy}

Standing assumptions for this section are that Ê$(\Omega,\kS,\P,\{T_z:z\in\gr\})$Ê
is a measurable dynamical system and $\range$ is finite.
This will not be repeated in the statements of the theorems.
When ergodicity is assumed it is mentioned.  
For the rest of this section we fix the integer $\ell\ge0$.
Define the   space $\bigom_\ell=\Omega\times\range^\ell$.
If $\ell=0$ then  $\bigom_\ell=\Omega$.   Convex analysis will 
be important throughout the paper.   The convex hull of $\range$ is denoted by 
  $\Uset$,   the set of extreme points of $\Uset$ is $\ex\Uset\subset\range$, 
   and  $\ri\Uset$ is the relative interior of $\Uset$.   

  The following is our key
	assumption.  
 
 	\begin{definition}\label{cL-def}
Let $\ell\in\Z_+$.	A function $g:\bigom_\ell\to\R$ is in 
	class $\Ll$ if for each $\tilde z_{1,\ell}\in\range^\ell$ these properties hold: 
	  $g(\cdot\,,\tilde z_{1,\ell})\in L^1(\P)$ and for any nonzero $z\in\range$
	\[\varlimsup_{\e\searrow0}\varlimsup_{n\to\infty} \max_{x\in\gr:\abs{x}\le n}\frac1n \sum_{0\le k\le\e n} 
	\abs{g(T_{x+kz}\w, \tilde z_{1,\ell})}=0\quad\text{for $\P$-a.e.\ $\w$.}\]
	\end{definition}
	
Membership  $g\in\Ll$ depends on a combination of   mixing   of $\P$
and moments of $g$.   If $\P$ is an arbitrary ergodic measure 
then in general we must assume $g$ bounded to guarantee $g\in\Ll$, except 
that if $d=1$ then $g\in L^1(\P)$ is enough.  
  Strong mixing of the process $\{g\circ T_x:x\in\gr\}$   and $g\in L^p(\P)$
  for some  large enough $p$ also guarantee $g\in\Ll$.   
  For example, with exponential mixing 
  $p>d$ is enough.  This is the case in particular if  $g$ has the 
 $r_0$-separated   i.i.d.\ property mentioned in Example \ref{ex:product}.   Lemma A.4 of \cite{Ras-Sep-Yil-12-} gives  a precise statement.

We now define the lattice points  $\xhat_n(\zeta)$ that appear in the point-to-point free
energy  \eqref{pres-8.6}.  
For each  point $\zeta\in\Uset$   fix weights $\beta_z(\zeta)\in[0,1]$ such that 
$\sum_{z\in\range}\beta_z(\zeta) =1$  and 
$\zeta=\sum_{z\in\range}\beta_z(\zeta) z$.  Then define a path 
\begin{align}\label{xhat-def}\xhat_n(\zeta)=\sum_{z\in\range}\bigl(\lfloor n\beta_z(\zeta)\rfloor +b_z^{(n)}(\zeta)\bigr) z, \quad n\in\Z_+,  \end{align}
where  $b_z^{(n)}(\zeta)\in\{0,1\}$ are  arbitrary but subject to these constraints:  
 if $\beta_z(\zeta)=0$ then  $b_z^{(n)}(\zeta)=0$,  and $\sum_z b_z^{(n)}(\zeta) = n-\sum_{z\in\range}\lfloor n\beta_z(\zeta)\rfloor$.   In other words,  $\xhat_n(\zeta)$ is a lattice point 
that approximates $n\zeta$,  is precisely $n$ $\range$-steps away from the origin,
and uses only those steps that appear in the particular  convex representation 
$\zeta=\sum_z \beta_z z$ that was picked.   
When $\zeta\in\Uset\cap\Q^d$ we require that  $\beta_z(\zeta)$ be rational. This is 
  possible by Lemma A.1  of \cite{Ras-Sep-Yil-12-}.
  If we only cared about $\pres_\ell(g,\zeta)$ for rational $\zeta$ we could allow much
  more general paths, see Theorem \ref{th:pt2pt-rats} below. 


The next  theorem establishes the  existence of  the quenched  point-to-point free energy (a) 
and   { free energy} (b).  Introduce the empirical measure $R_n^{\oneell}$ by 
\beq R_n^{\oneell}(g)=n^{-1}\sum_{k=0}^{n-1}g(T_{X_k}\w,Z_{k+1,k+\ell}).\label{Rnell} \eeq

\begin{theorem}\label{th:pt2pt}
Fix  $g\in\Ll$.   

{\rm (a)} For $\P$-a.e.\ $\w$ and simultaneously  for all $\zeta\in\Uset$ the limit
	\begin{align}\label{xipres}
	\pres_\ell(g,\zeta;\w)=
	\lim_{n\to\infty}n^{-1}\log E\big[e^{n R_n^{\oneell}(g)}\one\{X_n=\xhat_n(\zeta)\}\big]
	\end{align}
 exists in $(-\infty,\infty]$.  For a particular $\zeta$ the  limit is  independent of the choice of  convex representation $\zeta=\sum_z\beta_z z$ and  the numbers $b^{(n)}_z$ that define
 $\xhat_n(\zeta)$ in \eqref{xhat-def}.
When $\zeta\not\in\Uset$ it is natural to set $\pres_\ell(g,\zeta)=-\infty$.

{\rm (b)} The limit 
		\begin{align}\label{pres-limit}
		\pres_\ell(g;\w)=\lim_{n\to\infty}n^{-1}\log E\big[e^{\sum_{k=0}^{n-1}g(T_{X_k}\w,Z_{k+1,k+\ell})}\big]
		\end{align}
exists $\P$-a.s.\ in $(-\infty,\infty]$ and satisfies
	\begin{align}\pres_\ell(g)
	=\sup_{\xi\in\Q^d\cap\Uset}\pres_\ell(g,\xi)
	=\sup_{\zeta\in\Uset}\pres_\ell(g,\zeta).\label{sup pt2pt}
	\end{align}
\end{theorem}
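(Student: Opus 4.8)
The plan is to prove part (a) by a subadditivity argument that must be carried out with some care because the naive concatenation of two point-to-point partition functions is not quite multiplicative: the second walk segment sees a shifted environment, and the lattice points $\xhat_n(\zeta)$ are only approximately $n\zeta$. First I would fix a rational $\zeta \in \Uset\cap\Q^d$ and a rational convex representation $\zeta = \sum_z \beta_z z$, so that for $n$ divisible by the common denominator $N$ the path $\xhat_n(\zeta)$ becomes an exact $\gr$-linear combination, $\xhat_{m+n}(\zeta) = \xhat_m(\zeta) + \xhat_n(\zeta)$ along the subsequence $n\in N\Z_+$. Writing $a_n(\w) = \log E[e^{nR_n^{\oneell}(g)}\one\{X_n = \xhat_n(\zeta)\}]$, the Markov property of $P$ splits the walk at time $m$ and gives $a_{m+n}(\w) \ge a_m(\w) + a_n(T_{\xhat_m(\zeta)}\w) + (\text{boundary terms involving } g \text{ over the } \ell \text{ overlap steps})$. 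The boundary correction is a sum of at most $\ell$ terms of the form $g(T_{x}\w, \cdot)$ with $|x|\le Mm$; this is exactly the kind of term controlled by the defining condition of class $\Ll$ (Definition \ref{cL-def}), so it is $o(m+n)$ along the relevant directions. Hence $a_n$ is superadditive up to a negligible error along $N\Z_+$, and the Kingman subadditive ergodic theorem — applied to $-a_n\circ(\text{the shift by }\xhat_N(\zeta))$, using $\P$-invariance under $T_z$, $z\in\gr$ — yields the a.s.\ limit $\pres_\ell(g,\zeta;\w)$ along $n\in N\Z_+$. The value lies in $(-\infty,\infty]$ because $a_n \ge -n\log|\range| + (\text{something integrable})$ from keeping a single deterministic path to $\xhat_n(\zeta)$.

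Next I would remove the restriction to the subsequence $N\Z_+$ and to rational $\zeta$. For general $n$, interpolate: $\xhat_n(\zeta)$ differs from $\xhat_{n'}(\zeta)$ with $n' = N\lfloor n/N\rfloor$ by a bounded number of steps, and prepending or appending $O(1)$ steps changes $a_n$ by $O(1)$ plus finitely many $g$-terms at locations $|x|\le Mn$, again $o(n)$ by class $\Ll$; so the limit exists along all $n$. Independence of the limit from the choice of representation $\zeta = \sum_z\beta_z z$ and of the rounding bits $b_z^{(n)}$ follows by the same interpolation, since any two admissible endpoints for the same $\zeta$ are $O(1)$ steps apart. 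Finally, for irrational $\zeta\in\Uset$ I would use a sandwiching/continuity argument: the map $n\mapsto \xhat_n(\zeta)$ can be approximated by $\xhat_n$ of nearby rationals, and convexity of $a_n$ in the endpoint (a consequence of Hölder, as in \cite{Ras-Sep-Yil-12-}) together with the already-established rational case gives the limit for all $\zeta\in\Uset$ simultaneously; the "simultaneously for all $\zeta$" clause is handled by first obtaining the a.s.\ limit on a countable dense set of rationals and then using monotonicity/convexity to extend to the full $\Uset$ off a single null set.

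For part (b), the existence of $\pres_\ell(g;\w)$ and the identity \eqref{sup pt2pt} I would obtain by comparing the unrestricted partition function with the point-to-point ones. The lower bound $\pres_\ell(g) \ge \sup_\zeta \pres_\ell(g,\zeta)$ is immediate since $E[e^{nR_n}] \ge E[e^{nR_n}\one\{X_n = \xhat_n(\zeta)\}]$ for each $\zeta$. For the upper bound, partition the event according to the endpoint $X_n$: there are at most $(2Mn+1)^d$ possible values, all lying in $n\Uset\cap\gr$, so $E[e^{nR_n}] \le (2Mn+1)^d \max_{x} E[e^{nR_n}\one\{X_n = x\}]$, and $n^{-1}\log$ of the polynomial factor vanishes. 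One then needs that $n^{-1}\log E[e^{nR_n}\one\{X_n = x\}]$, maximized over admissible $x$, converges to $\sup_{\zeta\in\Uset}\pres_\ell(g,\zeta)$; the uniformity here is where the main work lies, and it follows from (a) plus an equicontinuity estimate in $\zeta$ (the convexity of the prelimit in the endpoint, valid on the rationals, pushes through a uniform modulus of continuity). The reduction of $\sup_{\zeta\in\Uset}$ to $\sup_{\xi\in\Q^d\cap\Uset}$ is then just upper semicontinuity of $\zeta\mapsto\pres_\ell(g,\zeta)$, or directly density of the rationals in the equicontinuous family.

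The step I expect to be the genuine obstacle is the control of the boundary $g$-terms produced by concatenation: making precise that $\max_{|x|\le Mn}\sum_{0\le k\le \ell}|g(T_{x+kz}\w,\cdot)| = o(n)$ $\P$-a.s., uniformly enough to survive both the Kingman application and the interpolation in $n$, is exactly the content of membership in $\Ll$, and threading that hypothesis through every estimate — rather than any single clever trick — is the crux. Everything else is bookkeeping with the subadditive ergodic theorem and elementary convexity.
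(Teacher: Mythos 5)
The overall scaffold you describe---Kingman for rational $\zeta$ along a subsequence, removing the subsequence restriction by $O(1)$ path modifications, comparing the unrestricted partition function with a polynomial number of point-to-point ones for part (b)---matches the paper's strategy through Theorem~\ref{th:pt2pt-rats}. But the two places you flag as ``bookkeeping'' are in fact where the paper does real work, and your proposed shortcuts there do not hold up.

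The central problem is your reliance on ``convexity of $a_n$ in the endpoint (a consequence of H\"older).'' H\"older yields convexity of $g\mapsto \log E[e^{nR_n^\ell(g)}\one\{X_n=x\}]$ for \emph{fixed} $x$, not convexity (or concavity) of the map $x\mapsto \log E[e^{nR_n^\ell(g)}\one\{X_n=x\}]$. There is no prelimit convexity in the endpoint variable, and the paper proves concavity of $\pres_\ell(g,\cdot)$ only in the \emph{limit}, in Theorem~\ref{th:regular}, via a Markov-property splitting that itself needs the deterministicity of Theorem~\ref{th:deterministic} to pass a shifted factor to the limit---none of which is available when proving Theorem~\ref{th:pt2pt}. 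Consequently, both your proposed extension from rational to irrational $\zeta$ (``monotonicity/convexity to extend to the full $\Uset$ off a single null set'') and your proposed equicontinuity estimate behind part (b) rest on a property that does not exist. What the paper actually does for irrational $\zeta$ is construct, for each large $n$ and a nearby rational $\xi$, an explicit admissible path of length $r_n=O(\varepsilon n)$ from $\xhat_n(\zeta)$ forward to a multiple of $\xi$, arrange those $r_n$ steps carefully so that the accumulated $g$-errors are dominated by a term $B(\w,n,\varepsilon,\kappa)$ controlled by $g\in\Ll$, and sandwich (\eqref{zeta-ub}, \eqref{zeta-lb}); for part (b) the analogous explicit construction is Lemma~\ref{lm-pressure2}, which you would need to replace with a genuine argument rather than gesturing at equicontinuity.

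A related gap: you assert that ``any two admissible endpoints for the same $\zeta$ are $O(1)$ steps apart.'' The lattice points are indeed within $O(1)$ Euclidean distance, but when $0\notin\Uset$ (the strictly directed case, central to this paper) every $\range$-step moves strictly forward, so two distinct lattice points with the same time/layer index are not connectable by any admissible path at all. Independence of $\pres_\ell(g,\zeta)$ from the choice of convex representation is genuinely nontrivial; the paper handles it by observing that every representation of $\zeta$ uses only $z\in\bar\range$ (the steps in the face containing $\zeta$ in its relative interior), approximating with rationals $\xi_n$ and invoking Lemma~\ref{lm:co1} to get converging coefficients in both representations, and then using the already-proved representation-independence at rational points (Theorem~\ref{th:pt2pt-rats}). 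You would need something along these lines, not a lateral $O(1)$-step swap.
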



Formula \eqref{pres19} in 
  Section \ref{ld-sec}  shows how to   recover
$\pres_\ell(g,\zeta)$ from knowing $\pres_\ell(h)$ for a broad enough class of functions $h$.

\begin{remark}[Conditions for finiteness.]  In general, we need to assume that
$g$ is bounded from above to prevent the possibility that $\pres_\ell(g,\zeta)$ 
takes the value $+\infty$.  When  $g$ has the $r_0$-separated i.i.d.\ property 
  and $0\notin\Uset$  as in Example \ref{ex:dir-iid},  
 the assumption $\E[\abs{g}^p]<\infty$ for some $p>d$ guarantees that $\pres_\ell(g,\zeta)$
and $\pres_\ell(g)$ are a.s.\ finite (Lemma \ref{lm:animal}).  
In fact $\pres_\ell(g,\cdot\,)$ is either bounded or identically $+\infty$ on $\ri\Uset$  
(Theorem \ref{th:regular}).   
\label{finite-rem}\end{remark}

Let us recall  facts about convex sets.
A {\sl face} of a convex set $\Uset$ is a convex subset $\Uset_0$ such that every (closed) line segment in $\Uset$ with a relative interior point in $\Uset_0$ has both 
endpoints in $\Uset_0$. $\Uset$ itself is a face. By Corollary 18.1.3 of \cite{Roc-70} any other face of $\Uset$ is entirely contained in the relative boundary of $\Uset$.
Extreme points of $\Uset$ are the zero-dimensional faces. 
By Theorem 18.2 of \cite{Roc-70}   each point $\zeta\in\Uset$ has a unique face  $\Uset_0$ such that $\zeta\in\ri\Uset_0$.  (An extreme case of this is   $\zeta\in\ex\Uset$ in which
case $\{\zeta\}=\Uset_0=\ri\Uset_0$. Note that the relative interior of a nonempty convex set
is never empty.) 
By Theorem 18.1 of \cite{Roc-70} if $\zeta\in\Uset$ belongs to a face $\Uset_0$ then any representation of $\zeta$ as a convex combination
of elements of $\Uset$  involves only elements of $\Uset_0$. Lastly, Theorem 18.3 in \cite{Roc-70} says that a face $\Uset_0$ is the convex hull of 
$\range_0=\range\cap\Uset_0$.

We address basic properties of $\pres_\ell(g,\zeta;\w)$.  The first issue
is whether it is random (genuinely a function of $\w$) or deterministic
(there is a value $\pres_\ell(g,\zeta)$ such that $\pres_\ell(g,\zeta;\w)=\pres_\ell(g,\zeta)$
for $\P$-almost every $\w$).  
This will depend on the setting.   
   If $0\in\ex\Uset$ then the condition  $X_n=0$ does not permit the walk to
  move and 
  $\pres_\ell(g,0;\w)=-\log|\range|+g(\w,(0,\dotsc,0))$.      
But even if the origin does not cause problems,  
   $\pres_\ell(g,\zeta;\w)$ is not necessarily deterministic on all of $\Uset$ if 
$\P$ is  not   totally ergodic.   
For example, if $0\ne z\in\ex\Uset$ then 
$X_n=nz$ is possible only by repetition of step $z$ and 
$\pres_\ell(g,z;\w)=-\log|\range| +\E[g(\w, (z,\dotsc,z))\,|\,\mathfrak I_z]$, where  $\mathfrak I_z$ is the
$\sigma$-algebra invariant under $T_z$. 

\begin{theorem}\label{th:deterministic}    Fix  $g\in\Ll$. 
Let $\Uset_0$ be any face of $\Uset$, possibly $\Uset$ itself. 
Suppose  $\P$ is ergodic under $\{T_z:z\in\range\cap\Uset_0\}$.
Then there exist a nonrandom function $\pres_\ell(g,\zeta)$  of $\zeta\in\ri\Uset_0$
and an event $\Omega_0$ such that {\rm (i)}  $\P(\Omega_0)=1$ and 
 {\rm (ii)} 
for all $\w\in\Omega_0$ and $\zeta\in\ri\Uset_0$ the limit in \eqref{xipres} equals
$\pres_\ell(g,\zeta)$. 
%
\end{theorem}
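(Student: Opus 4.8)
The plan is to combine the already-established existence of the limit \eqref{xipres} from Theorem \ref{th:pt2pt}(a) with an ergodicity argument that shows the limit does not depend on $\w$ when $\P$ is ergodic under the subgroup generated by the steps lying in the face $\Uset_0$. First I would fix the face $\Uset_0$ and record that, by Theorem 18.3 of \cite{Roc-70}, $\Uset_0=\conv\range_0$ with $\range_0=\range\cap\Uset_0$, and that by Theorem 18.1 of \cite{Roc-70} any convex representation of a point $\zeta\in\Uset_0$ uses only steps from $\range_0$. Hence for $\zeta\in\ri\Uset_0$ the path $\xhat_n(\zeta)$ in \eqref{xhat-def} can be built entirely from steps in $\range_0$, and the event $\{X_n=\xhat_n(\zeta)\}$ only constrains walks whose increments stay in $\range_0$. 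The key point is that the group generated by $\range_0$ is exactly the group under which $\P$ is assumed ergodic, so shift-invariance along that subgroup is available.

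Next I would establish the $\w$-independence for a fixed rational $\zeta\in\ri\Uset_0\cap\Q^d$. Write $F_n(\w)=n^{-1}\log E[e^{nR_n^{\oneell}(g)}\one\{X_n=\xhat_n(\zeta)\}]$. The standard device is to show that the limiting value is measurable with respect to the invariant $\sigma$-algebra of $\{T_z:z\in\range_0\}$, which is trivial by hypothesis, so the limit is a.s.\ constant. Concretely, using a rational convex representation $\zeta=\sum_{z\in\range_0}\beta_z z$ (available by Lemma A.1 of \cite{Ras-Sep-Yil-12-}), pick $z_0\in\range_0$ with $\beta_{z_0}>0$; then for large $n$ one can compare the partition function started from $\w$ with the one started from $T_{z_0}\w$ by inserting or deleting a single $z_0$-step at the front of the path. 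Since $g\in\Ll$, the cost of such a local surgery is $o(n)$ in the exponent (this is precisely the kind of estimate the class $\Ll$ is designed to kill — a bounded number of translated potential evaluations along a fixed direction, controlled uniformly over starting points $|x|\le n$). This shows $\limsup_n F_n(\w)=\limsup_n F_n(T_{z_0}\w)$ and likewise for $\liminf$, so the limit $\pres_\ell(g,\zeta;\w)$ is $T_{z_0}$-invariant; iterating over a generating set of $\range_0$ and invoking ergodicity gives a nonrandom value $\pres_\ell(g,\zeta)$ on a full-measure event $\Omega_\zeta$.

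Finally I would promote this from a fixed rational $\zeta$ to all of $\ri\Uset_0$ simultaneously on a single event. Intersect the events $\Omega_\zeta$ over $\zeta\in\ri\Uset_0\cap\Q^d$, a countable set, together with the full-measure event from Theorem \ref{th:pt2pt}(a) on which \eqref{xipres} holds for every $\zeta\in\Uset$; call the intersection $\Omega_0$. On $\Omega_0$ the function $\zeta\mapsto\pres_\ell(g,\zeta;\w)$ agrees with the deterministic function $\pres_\ell(g,\cdot)$ on the dense set $\ri\Uset_0\cap\Q^d$. To extend to irrational $\zeta\in\ri\Uset_0$ I would use a concavity/interpolation argument: a path targeting $\xhat_n(\zeta)$ can be routed through intermediate rational targets, which by superadditivity-type bounds (again using $\Ll$ to absorb the $o(n)$ connection costs) sandwiches $\pres_\ell(g,\zeta;\w)$ between combinations of the already-identified rational values; since those are deterministic, so is the limit at $\zeta$. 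The main obstacle I anticipate is exactly this last step — making the rational-to-irrational interpolation uniform in $\w$ on $\ri\Uset_0$ rather than merely for each fixed $\zeta$ — and the definition of class $\Ll$, with its $\max_{|x|\le n}$ over starting points, is clearly tailored to supply the uniform control needed to close it.
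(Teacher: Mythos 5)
Your proposal matches the paper's proof in structure and essential ideas: for rational $\xi\in\ri\Uset_0$ one shows $T_z$-invariance of $\pres_\ell(g,\xi)$ for each $z\in\range_0$ by a conditioning/surgery at the first step and then applies ergodicity, and the extension to irrational $\zeta$ is the rational-approximation limit \eqref{pres-5} already established on a single full-measure event in the proof of Theorem \ref{th:pt2pt}(a). Two small remarks on execution: the ``delete a single $z_0$-step'' surgery needs a short bridge because $\xhat_n(\zeta)-z_0\ne\xhat_{n-1}(\zeta)$ in general, which the paper handles by working along the subsequence $n=mk$ with $k\xi\in\Z^d$ so targets are exact lattice points; and the one-sided inequality $\pres_\ell(g,\xi)\ge\pres_\ell(g,\xi)\circ T_z$ already forces a.s.\ equality (since both sides have the same law under the $T_z$-invariant measure $\P$), so you need not arrange both an insertion and a deletion.
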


\begin{remark} 
(i) For an ergodic $\P$ we get a deterministic function
$\pres_\ell(g,\zeta)$  of $\zeta\in\ri\Uset$.  We write 
$\pres_\ell(g,\zeta;\w)=\pres_\ell(g,\zeta)$ in this case. 

(ii) 
If $\P$ is nondegenerate  
  the  assumption rules out the case $\Uset_0=\{0\}$
because $T_0$ is  the identity mapping.  
$\{0\}$ is   a   face if $0\in\ex\Uset$. 

(iii) 
An important special case is   the  totally ergodic $\P$. Then the
theorem above applies to each face except $\{0\}$.  Since there are only finitely many
faces,  we get a single deterministic function
 $\pres_\ell(g,\zeta)$ and a single event $\Omega_0$ of full $\P$-probability
 such that  $\pres_\ell(g,\zeta)$ is  the limit in \eqref{xipres}  
for all $\w\in\Omega_0$ and $\zeta\in\Uset\smallsetminus\{0\}$. 
The point $\zeta=0$ is included
in this statement  if $0$ is a non-extreme point of $\Uset$. 
\end{remark} 
 
   Convexity of $\pres_\ell(g,\zeta)$ in $g$ follows from H\"older's inequality.
The next theorem establishes some  
  regularity in $\zeta$ for the a.e.\ defined   function
    $\pres_\ell(g,\zeta;\w)$. 
The infinite case needs to be separated.


\begin{theorem} \label{th:regular}
Let  $g\in\Ll$ and assume $\P$ is  ergodic.   
Then $\pres_\ell(g)$ is deterministic.   The following properties
hold for $\P$-a.e.~$\w$. 

{\rm (a)} If $\pres_\ell(g)=\infty$  then $\pres_\ell(g,\zeta)$ is 
 identically $+\infty$ for  $\zeta\in\ri\Uset$.  

{\rm (b)}    Suppose $\pres_\ell(g)<\infty$. Then    $\pres_\ell(g,\cdot\,;\w)$ is  
  lower semicontinuous and bounded on $\Uset$ and  concave and  continuous 
on $\ri\Uset$.  The   
 upper semicontinuous regularization of $\pres_\ell(g,\cdot\,;\w)$ 
 and  its unique continuous extension from $\ri\Uset$ to   $\Uset$ are equal
 and deterministic.  
  \end{theorem}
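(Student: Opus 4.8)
The plan is to derive everything from the point-to-point limit \eqref{xipres} of Theorem \ref{th:pt2pt}, the sup-representation \eqref{sup pt2pt}, and the deterministic statement of Theorem \ref{th:deterministic}. First I would settle the dichotomy: by \eqref{sup pt2pt}, $\pres_\ell(g)=\sup_{\xi\in\Q^d\cap\Uset}\pres_\ell(g,\xi)$, and since $\P$ is ergodic Theorem \ref{th:deterministic} (applied to the top face $\Uset_0=\Uset$) makes $\pres_\ell(g,\xi)$ deterministic for each $\xi\in\ri\Uset$; rational points of $\ri\Uset$ suffice in the sup because any $\xi\in\Uset$ can be approached along a segment into $\ri\Uset$, so $\pres_\ell(g)$ is deterministic. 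For part (a), fix $\w$ in the full-probability event of Theorem \ref{th:deterministic} and suppose $\pres_\ell(g,\zeta_0)<\infty$ at some $\zeta_0\in\ri\Uset$; I would show that then $\pres_\ell(g)<\infty$, which contradicts the hypothesis. The mechanism: any path to $\xhat_n(\zeta)$ for an arbitrary $\zeta\in\Uset$ can be rerouted through $\xhat_{(1-\e)n}(\zeta_0)$ at a cost controlled by the class-$\Ll$ bound in Definition \ref{cL-def} (the same splicing/concatenation estimate that proves \eqref{sup pt2pt}), so $\pres_\ell(g,\zeta)\le\pres_\ell(g,\zeta_0)+o(1)$ uniformly, whence $\pres_\ell(g)$ is finite. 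Contrapositively, $\pres_\ell(g)=\infty$ forces $\pres_\ell(g,\zeta)=+\infty$ for every $\zeta\in\ri\Uset$.

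For part (b), assume $\pres_\ell(g)<\infty$. \emph{Concavity on $\ri\Uset$}: given $\zeta_1,\zeta_2\in\ri\Uset$ and $t\in[0,1]\cap\Q$ with $t\zeta_1+(1-t)\zeta_2=\zeta$, concatenate an optimal path to $\xhat_{tn}(\zeta_1)$ with an optimal path to $\xhat_{(1-t)n}(\zeta_2)$ (translated), using the Markov property of $P$ and the additivity of $R_n^\ell(g)$; the linking cost is $o(n)$ by membership in $\Ll$, giving $\pres_\ell(g,\zeta)\ge t\,\pres_\ell(g,\zeta_1)+(1-t)\,\pres_\ell(g,\zeta_2)$. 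Rational $t$ then suffices by the continuity obtained next. \emph{Boundedness}: finiteness of $\pres_\ell(g)$ gives a uniform upper bound via \eqref{sup pt2pt}; for a lower bound on $\ri\Uset$ one path realizing velocity $\zeta$ already contributes $-n\log|\range|+R_n^\ell(g)\ge -Cn$ by the $L^1$ and $\Ll$ control. \emph{Continuity on $\ri\Uset$}: a concave finite function on the relatively open convex set $\ri\Uset$ is automatically continuous there (Rockafellar, Thm.\ 10.1). \emph{Lower semicontinuity on all of $\Uset$}: this comes from the variational/large-deviation lower bound — for $\zeta$ on the relative boundary, approach from inside along a segment, using the splicing estimate to show $\liminf$ of $\pres_\ell$ along the approach dominates the boundary value; equivalently, $\pres_\ell(g,\cdot\,)$ agrees with the $\liminf$ of a sequence of continuous (concave) functions restricted inward. \emph{The usc regularization equals the continuous extension}: a finite concave function on $\ri\Uset$ has a unique continuous (indeed concave) extension $\overline\pres$ to the compact set $\Uset$; since $\pres_\ell(g,\cdot\,)$ is lsc and bounded above by $\overline\pres$ on the boundary yet equals it on $\ri\Uset$, its usc regularization is exactly $\overline\pres$, and $\overline\pres$ is deterministic because it is determined by its values on $\Q^d\cap\ri\Uset$, which are deterministic by Theorem \ref{th:deterministic}.

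The main obstacle I expect is the behavior on the relative boundary of $\Uset$: there $\pres_\ell(g,\zeta;\w)$ need not be deterministic (the excerpt's examples with $z\in\ex\Uset$ show this), so the lower semicontinuity claim cannot be proved face-by-face using Theorem \ref{th:deterministic} alone. The right tool is the concatenation/rerouting estimate underlying \eqref{sup pt2pt}: one shows that for any boundary point $\zeta$ and any sequence $\zeta_m\to\zeta$, a near-optimal path to $\xhat_n(\zeta_m)$ can be modified at $o(n)$ exponential cost into a path to $\xhat_n(\zeta)$, giving $\pres_\ell(g,\zeta;\w)\ge\varlimsup_m\pres_\ell(g,\zeta_m;\w)$; combined with a similarly cheap deformation in the other direction one gets lsc. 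Making the $o(n)$ error uniform in the direction, simultaneously for all $\zeta$, is precisely where the quantitative hypothesis in Definition \ref{cL-def} — the $\varlimsup_{\e}\varlimsup_n\max_{|x|\le n}\tfrac1n\sum_{k\le\e n}|g(T_{x+kz}\w,\cdot)|=0$ — does the work, and reusing that lemma is the crux of the argument.
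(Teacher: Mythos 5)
The overall architecture you propose — build on Theorems \ref{th:pt2pt} and \ref{th:deterministic}, establish concavity via the Markov property, deduce continuity on $\ri\Uset$ from finiteness and concavity, get lower semicontinuity from a path-splicing argument controlled by membership in $\Ll$, and identify the usc regularization with the continuous extension — is essentially the paper's route. You also correctly locate the main difficulty (non-deterministic boundary values). However, two steps as written contain genuine gaps.

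First, your argument for part (a) does not work. You claim that if $\pres_\ell(g,\zeta_0)<\infty$ for some $\zeta_0\in\ri\Uset$ then one can reroute paths to $\xhat_n(\zeta)$ through $\xhat_{(1-\e)n}(\zeta_0)$ to get $\pres_\ell(g,\zeta)\le\pres_\ell(g,\zeta_0)+o(1)$ \emph{uniformly in $\zeta$}. That inequality is false in general: a path of length $n$ ending at $\xhat_n(\zeta)$ cannot be modified to pass through $\xhat_{(1-\e)n}(\zeta_0)$ unless roughly $\e n \ge n\,|\zeta-\zeta_0|/M$ extra steps are available, so the deformation is only cheap when $\zeta$ is within distance $O(\e)$ of $\zeta_0$, not uniformly on $\Uset$. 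Moreover even if the construction were geometrically feasible, restricting paths gives a \emph{lower}, not an upper, bound on the partition function, so it cannot yield $\pres_\ell(g,\zeta)\le\pres_\ell(g,\zeta_0)+o(1)$. (Taking $g\equiv 0$ shows the inequality fails: the entropy of the walk is maximized at the mean velocity, so for $\zeta_0$ away from the mean, $\pres_\ell(0,\zeta_0)$ is strictly smaller than the sup.) The paper's proof of (a) instead runs through concavity: if $\pres_\ell(g,\zeta_n)\to\infty$, write $\zeta'=t\zeta_n+(1-t)\zeta''_n$ with $\zeta''_n\in\ri\Uset$, use $\pres_\ell(g,\zeta')\ge t\,\pres_\ell(g,\zeta_n)+(1-t)\,\pres_\ell(g,\zeta''_n)$, and feed in the uniform lower bound on $\ri\Uset$ that lower semicontinuity and compactness supply.

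Second, your concavity argument glosses over a real technical point. When you concatenate a path to $\xhat_{tn}(\zeta_1)$ with a path to $\xhat_{(1-t)n}(\zeta_2)$, the second log-partition function appears \emph{shifted} by $T_{\xhat_{tn}(\zeta_1)}$, and for the drifting shifts $T_{mkt\xi'}$ Kingman's theorem alone does not give a.s.\ convergence of the shifted quantity to the deterministic limit. The paper handles this by first invoking Theorem \ref{th:deterministic} to know $\pres_\ell(g,\cdot)$ is deterministic on $\ri\Uset$, which upgrades the convergence of the shifted term to convergence in probability and hence a.s.\ along a subsequence. Without this step the concavity inequality \eqref{concavity} is not justified. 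A related point: in claiming deterministic $\pres_\ell(g)$ from \eqref{sup pt2pt} you restrict the supremum to rational points of $\ri\Uset$; that restriction is legitimate only \emph{after} lower semicontinuity is in hand, so the logical order must be: concavity, then lsc, then deterministic $\pres_\ell(g)$, then the dichotomy (a).
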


\begin{remark} 
  Suppose $\P$ is  totally ergodic and we are in the finite case
of Theorem \ref{th:regular}(b). Then concavity in $\zeta$ extends to all of $\Uset$
(see Remark \ref{toterg-rem} below for the argument).  
This is true despite the possibility of a random value  $\pres_\ell(g,0;\w)$ 
at $\zeta=0$ (this happens in the case $0\in\ex\Uset$).  In other words, 
 concavity and  lower semicontinuity are both valid even with the
 random value at $\zeta=0$.  However, continuity must fail because
 on  $\Uset\smallsetminus\{0\}$ the function $\pres_\ell(g, \zeta)$
 is deterministic.  This issue of extending continuity from $\ri\Uset$
 to the boundary is tricky.  We address this issue  in  
  the i.i.d.\ case in Theorem \ref{th:pt2pt cont}.  
 \end{remark}

\medskip

We   turn to the proofs of the theorems in this section. Recall $M=\max\{\abs z:z\in\range\}$.   Let
\beq   D_n=\{z_1+\cdots+z_n:z_{1,n}\in\range^n\}  \label{D_n}\eeq
denote the set of endpoints of admissible paths of length $n$.  
 To prove Theorem \ref{th:pt2pt} we first treat rational points  $\xi\in\Uset$. In this case
we can be more liberal with the function $g$ and with the paths.  

\begin{theorem}\label{th:pt2pt-rats}
Let  $g(\cdot\,,z_{1,\ell})\in L^1(\P)$ for each $z_{1,\ell}\in\range^\ell$.   
Then for $\P$-a.e.\ $\w$ and simultaneously  for all $\xi\in\Uset\cap\Q^d$
the following holds:  for any path   $\{y_n(\xi)\}_{n\in\Z_+}$ such that 
$y_n(\xi)-y_{n-1}(\xi)\in\range$ and for some $k\in\N$,  $y_{mk}(\xi)=mk\xi$ for all $m\in\Z_+$, 
the limit 
	\begin{align}\label{xipresQ}
	\pres_\ell(g,\xi;\w)=
	\lim_{n\to\infty}n^{-1}\log E\big[e^{n R_n^{\oneell}(g)}\one\{X_n=y_n(\xi)\}\big]
	\end{align}
exists in $(-\infty,\infty]$.  For a given $\xi\in\Uset\cap\Q^d$ the limit is independent
of the choice of the path $\{y_n(\xi)\}$ subject to the condition above.  
%
\end{theorem}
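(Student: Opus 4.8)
The plan is to prove existence of the limit \eqref{xipresQ} along the subsequence $n=mk$ by a subadditivity argument, then upgrade to the full limit, and finally verify independence of the choice of path. Fix a rational $\xi\in\Uset\cap\Q^d$ and an integer $k$ with $k\xi\in D_k$ (such $k$ exists since $\xi$ is a rational convex combination of elements of $\range$). Write $G_m(\w)=\log E\big[e^{mkR_{mk}^{\oneell}(g)}\one\{X_{mk}=mk\xi\}\big]$. The first step is to establish superadditivity up to an error: by splitting a path from $0$ to $(m+p)k\xi$ at the intermediate point $mk\xi$, using the Markov property of $P$ and the cocycle property $T_{x+y}=T_xT_y$, one gets
\begin{align}
G_{m+p}(\w)\ge G_m(\w)+G_p(T_{mk\xi}\w) - C_{m,p}(\w),\nonumber
\end{align}
where the error term $C_{m,p}$ comes from the at most $\ell-1$ potential terms straddling the splitting point $X_{mk}=mk\xi$; these involve $g(T_{mk\xi+\cdot}\w,\cdot)$ and are controlled because $g(\cdot,\tilde z_{1,\ell})\in L^1(\P)$. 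The second step is to apply Kingman's subadditive ergodic theorem (in its form allowing a nonergodic $\P$, so the limit is an $\mathfrak I$-measurable random variable, hence the value may a priori depend on $\w$) to $-G_m$, after checking the integrability of the positive part; here one uses an a priori upper bound $G_m\le \sum$ of the (finitely many per step) $|g|$ terms along admissible paths, which is $L^1$. This yields $\lim_m (mk)^{-1}G_m(\w)=\pres_\ell(g,\xi;\w)$ in $(-\infty,\infty]$, $\P$-a.s.

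The third step is to fill in the non-multiples of $k$: for general $n$, write $n=mk+r$ with $0\le r<k$ and compare $\log E[e^{nR_n^{\oneell}(g)}\one\{X_n=y_n(\xi)\}]$ with $G_m$. One does this by forcing the walk to follow a fixed length-$r$ detour joining $y_{mk}(\xi)=mk\xi$ to $y_{mk+r}(\xi)$ (or the reverse), paying a factor $|\range|^{-r}$ from $P$-probability and an additive error of at most $r$-many shifted $|g|$-terms evaluated near $y_n(\xi)$. The $L^1$ hypothesis plus a Borel--Cantelli / ergodic-averaging argument shows $n^{-1}$ times such errors $\to 0$ a.s.\ simultaneously over the finitely many residues $r$ and — since there are only countably many rationals $\xi$ and, for each, one needs this uniformly — one intersects countably many full-probability events. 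The same detour-insertion comparison, run between two competing admissible paths $\{y_n(\xi)\}$ and $\{y'_n(\xi)\}$ that agree at multiples of a common period, shows the limit does not depend on the path: each can be routed into the other at scale $mk$ at a cost that is $o(n)$.

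The main obstacle I expect is the uniform control of the straddling and detour error terms: one needs $n^{-1}\sum$ of $|g|$ evaluated along short path segments near a moving basepoint $y_n(\xi)$ to vanish, uniformly in the residue and simultaneously for all rational $\xi$, using only $g(\cdot,\tilde z_{1,\ell})\in L^1(\P)$ (not the stronger class $\Ll$ condition). This is softer than the $\Ll$ estimate of Definition \ref{cL-def} because here the number of summed terms is bounded (at most $k-1$ or $\ell-1$), not of order $\e n$, so a direct application of Birkhoff's ergodic theorem along the arithmetic progression $\{y_{mk}(\xi)+j z\}$, combined with a maximal-type bound to handle the maximum over the few relevant displacements, should suffice; the care needed is in arranging a single a.s.\ event that works for all countably many $\xi$ and all admissible periodic paths at once. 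Everything else — the superadditivity splitting, the invocation of Kingman, and the passage to arbitrary $n$ — is routine once this error control is in place.
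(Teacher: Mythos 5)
Your proposal follows essentially the same route as the paper's proof: superadditivity at multiples of $k$ via the Markov property with $L^1$-controlled straddling terms, Kingman's subadditive ergodic theorem (correctly noting the limit is a priori $\omega$-dependent), interpolation to general $n$ by routing through deterministic detours between $mk\xi$ and $y_n(\xi)$, and intersection of countably many full-probability events over the rationals. The only cosmetic difference is in your error control for the detour terms — the paper simply observes that $m^{-1}Y_m\to 0$ a.s.\ for identically distributed integrable $\{Y_m\}$ (a Borel--Cantelli consequence), which is lighter than the Birkhoff-along-arithmetic-progressions argument you sketch, but both work.
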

 

\begin{theopargself}
\begin{proof}[of Theorem {\ref{th:pt2pt-rats}}.]
Fix $\xi\in\Q^d\cap\Uset$, the path $y_n(\xi)$, and $k$  so that 
$y_{mk}(\xi)=mk\xi$ for all $m\in\Z_+$.
By the Markov property 
\beq\begin{aligned}
 &\log E\big[e^{(m+n)k R^{\oneell}_{(m+n)k}(g)}, X_{(m+n)k}=(m+n)k\xi\big]-2A_\ell(\w)\\
&\qquad\qquad\ge \log E\big[e^{mk R_{mk}^{\oneell}(g)}, X_{mk}=mk\xi\big]-2A_\ell(\w)\\
&\qquad\qquad\qquad+\log E\big[e^{nk R_{nk}^{\oneell}(g\circ T_{mk\xi})}, X_{nk}=nk\xi\big]-2A_\ell(T_{mk\xi}\w),
 \end{aligned}\label{subadd}\eeq
where   $T_x$ acts  by  $g\circ T_x (\w,z_{1,\ell})=g(T_x\w,z_{1,\ell})$
and  the errors are covered by defining 
\[A_\ell(\w)=\ell\max_{y\in\gr: \abs{y}\le M\ell} 
\max_{z_{1,\ell}\in\range^\ell}\max_{1\le i\le\ell} \abs{g(T_{-\xtil_i}\w,z_{1,\ell})} \in L^1(\P).\] 


Since $g\in L^1(\P)$ 
the random variable  $-\log E[e^{nk R_{nk}^{\oneell}(g)}, X_{nk}=nk\xi]+2A_\ell(\w)$ is $\P$-integrable for each $n$.
By Kingman's subadditive ergodic theorem (for example in the form in \cite[Theorem~2.6, page~277]{Lig-05}) 
\begin{align} \pres_\ell(g,\xi;\w)
&=\lim_{m\to\infty}\frac1{mk}\log
 E\big[e^{mk R_{mk}^{\oneell}(g)}, X_{mk}=mk\xi\big] \label{temp5} 
 \end{align}
 exists in $(-\infty,\infty]$ $\P$-almost surely. 
This limit is independent of $k$  because if $k_1$ and $k_2$ both work 
  and give  distinct limits, then  the limit 
along  the subsequence of  multiples
of $k_1k_2$ would not be defined.  Let $\Omega_0$ be  the full probability event 
on which limit \eqref{temp5} holds for all $\xi\in\Q^d\cap\Uset$
and   $k\in \N$  such that 
$k\xi\in\Z^d$.  
 
Next we extend  
  limit  \eqref{temp5} to  the full sequence.  
Given $n$ choose $m$ so that 
$   mk\le n< (m+1)k $.  
By assumption we have admissible paths from 
$mk\xi$ to $y_n(\xi)$ and from $y_n(\xi)$ to $(m+1)k\xi$, 
 so we can create inequalities by restricting the 
expectations to follow these path segments.  
  For convenience let us take $k>\ell$ so that $R^{\oneell}_{(m-1)k}(g)$ 
does not depend on the walk beyond time $mk$.  
Then, for all $\w$
\begin{align}
&\log E\big[e^{n R_{n}^{\oneell}(g)}, X_{n}=y_n(\xi)\big]\nn \\
&\quad\ge 
\log E\big[e^{(m-1)k R^{\oneell}_{(m-1)k}(g)}, X_{mk}=mk\xi,\, X_{n}=y_n(\xi)\big]-A_{2k}(T_{mk\xi}\w)\nn\\
&\quad\ge  \log E\big[e^{(m-1)k R^{\oneell}_{(m-1)k}(g)}, X_{mk}=mk\xi\big]-(n-mk)  \log |\range| -A_{2k}(T_{mk\xi}\w)\nn\\
&\quad\ge  \log E\big[e^{mk R^{\oneell}_{mk}(g)}, X_{mk}=mk\xi\big]
- k\log |\range| -2A_{2k}(T_{mk\xi}\w)\label{temp n+1}  
\end{align}
and similarly 
\begin{align*} 
&\log E\big[e^{(m+1)k R^{\oneell}_{(m+1)k}(g)}, X_{(m+1)k}=(m+1)k\xi\big]\\
&\quad\ge \log E\big[e^{n R^{\oneell}_{n}(g)}, X_{n}=y_n(\xi)\big]
-k\log |\range| -2A_{2k}(T_{mk\xi}\w).  
\end{align*}
Divide by $n$ and take $n\to\infty$ in the bounds developed above.  
Since in general $m^{-1}Y_m\to 0$ a.s.\ for
identically distributed integrable $\{Y_m\}$,  the  error terms vanish in the limit.
The limit holds on the full probability  subset of $\Omega_0$ where the errors 
$n^{-1}A_{2k}(T_{mk\xi}\w)\to 0$ for all $\xi$ and $k$.   We also conclude 
  that  the limit is independent of the choice of the path $y_n(\xi)$.
Theorem \ref{th:pt2pt-rats} is proved.
\qed
\end{proof}
\end{theopargself}

The next lemma will help in the proof of Theorem \ref{th:pt2pt} and the LDP in
Theorem \ref{th:ldp}  

\begin{lemma}\label{lm-pressure2}  
Let  $g\in\Ll$.  Define the paths $\{y_n(\xi)\}$ for $\xi\in \Q^d\cap\Uset$ as in
Theorem \ref{th:pt2pt-rats}.    Then  for $\P$-a.e.\ $\w$, we have the following 
bound  for all 
 compact $K\subset\R^d$ and $\delta>0$:  
 	\begin{align}\label{pres-a}
	&\varlimsup_{n\to\infty}n^{-1}\log E\big[e^{n R_n^{\oneell}(g)}\one\{X_n/n\in K\}\big]\\
	&\qquad\qquad\le\sup_{\xi\in \Q^d\cap K_\delta\cap\Uset}\,\varlimsup_{n\to\infty}n^{-1}\log E\big[e^{n R_n^{\oneell}(g)}\one\{X_n=y_n(\xi)\}\big]
	\end{align}
where $K_\delta=\{\zeta\in\R^d:\exists\zeta'\in K\text{ with }|\zeta-\zeta'|<\delta\}$.
\end{lemma}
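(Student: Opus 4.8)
The plan is to cover the compact set $K$ by finitely many small pieces, reduce the expectation over $\{X_n/n\in K\}$ to a sum of expectations over endpoints near rational points, and then push the exponential growth rate onto a single best rational point. First I would note that the walk after $n$ steps lives in the finite set $D_n\subset\Z^d$, and that $X_n/n\in K$ forces $X_n/n\in K\cap\Uset$ (since $D_n\subset n\Uset$), so we may intersect with $\Uset$ throughout. Because $K\cap\Uset$ is compact and $\Q^d$ is dense, for the given $\delta>0$ there is a finite set $\xi^{(1)},\dots,\xi^{(N)}\in\Q^d\cap K_\delta\cap\Uset$ (with $N$ depending on $\delta$ but not on $n$) such that the balls $B(\xi^{(i)},\delta/2)$ cover $K$; moreover any lattice point $x$ with $x/n\in K$ satisfies $|x/n-\xi^{(i)}|<\delta$ for some $i$, i.e. $|x-n\xi^{(i)}|<\delta n$.

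Next I would bound
\[
E\big[e^{n R_n^{\oneell}(g)}\one\{X_n/n\in K\}\big]
\le \sum_{i=1}^N E\big[e^{n R_n^{\oneell}(g)}\one\{\,|X_n-n\xi^{(i)}|<\delta n\,\}\big],
\]
so that $\varlimsup_n n^{-1}\log(\cdot)\le \max_i \varlimsup_n n^{-1}\log E[e^{nR_n^{\oneell}(g)}\one\{|X_n-n\xi^{(i)}|<\delta n\}]$, using $\varlimsup$ of a finite sum equals the max of the $\varlimsup$'s. It then remains to show, for a fixed $\xi\in\Q^d\cap\Uset$, that the contribution from endpoints within $\delta n$ of $n\xi$ has the same growth rate (up to the $\sup$ over a slightly larger rational set, which we already have room for) as the single endpoint $y_n(\xi)$. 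The mechanism is the Markov property together with $g\in\Ll$: if $X_n=x$ with $|x-y_n(\xi)|$ of order $\delta n$, we can append a short deterministic admissible path from $x$ to $y_{n'}(\xi)$ for an appropriate $n'$ close to $n$ (the number of appended steps is $O(\delta n)$ since both points are comparable), at the cost of at most $O(\delta n)\log|\range|$ in the exponent and an additive potential error along that appended path. The class-$\Ll$ hypothesis is exactly what controls that additive error: the $\varlimsup_{\e}\varlimsup_n \max_{|x|\le n} n^{-1}\sum_{0\le k\le \e n}|g(T_{x+kz}\w,\tilde z_{1,\ell})|=0$ bound makes the potential accumulated along any $O(\delta n)$-length segment negligible after dividing by $n$ and (eventually) letting $\delta\to0$. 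Since there are only finitely many endpoints $x$ in the ball and only finitely many step directions $z\in\range$ used to build the connecting segment, one combines these with a union bound and the $\Ll$-estimate applied along each coordinate direction.

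The main obstacle I anticipate is the bookkeeping in the connecting-path step: one must (a) realize an arbitrary displacement $y_n(\xi)-x$ of size $O(\delta n)$ by an admissible $\range$-path whose length $n'-n$ is controlled and whose intermediate points stay in a region where the $\Ll$-estimate is uniform (this is where writing the connecting path as a concatenation of straight runs $x+kz$, $0\le k\le O(\delta n)$, in finitely many directions $z\in\range$ is essential, so that Definition~\ref{cL-def} applies directly), and (b) absorb the discrepancy between $n$ and $n'$ and between $y_{n}(\xi)$ and the reference point $mk\xi$ using the already-established fact (from the proof of Theorem~\ref{th:pt2pt-rats}) that $n^{-1}\log E[e^{nR_n^{\oneell}(g)}\one\{X_n=y_n(\xi)\}]$ has a limit independent of the path $\{y_n(\xi)\}$. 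Once the connecting-segment error is shown to be $o(n)$ on a full-probability event (intersecting over the finitely many rational $\xi$'s arising from a countable family of $\delta$'s and the finitely many directions $z$), taking $\varlimsup_n$ and then recognizing that every $\xi^{(i)}$ lies in $\Q^d\cap K_\delta\cap\Uset$ yields \eqref{pres-a}. I would organize the write-up so that the deterministic geometric lemma (displacements of size $\le c\delta n$ between points of $D_n$ are realizable by admissible paths of length $\le n+C\delta n$ lying within $O(\delta n)$ of the straight segment) is stated and proved first, then the probabilistic estimate is assembled on top of it.
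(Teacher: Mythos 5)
Your covering-and-reconnect strategy is the right high-level idea (and some version of "connect the endpoint to a nearby rational one" is indeed what the paper does), but as written there is a genuine gap in the reconnecting step, and that gap is precisely where the paper's construction does real work.

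You fix a finite net $\xi^{(1)},\dots,\xi^{(N)}\in\Q^d\cap K_\delta\cap\Uset$ in advance and claim that for $x\in D_n\cap nK$ one can "append a short deterministic admissible path from $x$ to $y_{n'}(\xi^{(i)})$." In general no admissible $\range$-path from $x$ to $y_{n'}(\xi^{(i)})$ exists, for any $n'$. Take $\range=\{e_1,e_2\}$: every admissible displacement has both coordinates nonnegative, so from $x=(x_1,x_2)$ with $x_2>0$ one cannot reach any $y_{n'}(\xi)$ with $\xi=(1,0)$. More generally, the displacement $y_{n'}(\xi^{(i)})-x$ is realizable iff the barycentric coordinates $s_z$ of $n'\xi^{(i)}$ dominate those $a_z$ of $x$ coordinatewise, and a pre-fixed $\xi^{(i)}$ selected only by being within $\delta$ of $x/n$ gives no such domination. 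This is why the paper does \emph{not} use a pre-fixed net: for each individual $x\in D_n\cap nK$ it writes $x=\sum_z a_z z$, rounds up to $s^{(n)}_z=\lceil ka_z/((1+2\e)n)\rceil$, and defines a tailored rational target $\xi(n,x)=k^{-1}\sum_z s^{(n)}_z z+(1-k^{-1}\sum_z s^{(n)}_z)\zhat$, which (a) still lies in the finite set $k^{-1}D_k\cap K_\delta$, and (b) satisfies $m_n s^{(n)}_z\ge a_z$ by construction, so the connecting path exists with at most $3\e n$ steps. Without this (or an equivalent domination device) the argument does not close.

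A second, smaller, point: you correctly observe that the appended path must be organized into straight runs $x+kz$ so Definition~\ref{cL-def} applies, but Definition~\ref{cL-def} only controls runs in \emph{nonzero} directions $z$. If $0\in\range$ the naive "group steps by direction" leaves a long run of zero steps, all at the same site, to which the $\Ll$-estimate does not apply. The paper handles this by interleaving the zero steps with $\zhat$-steps in bounded blocks (which is why it first arranges that the path contains at least $n\e$ many $\zhat$-steps, cf.\ \eqref{zhat-nr}), and only then groups the remaining steps by direction; the multiplicative constant in \eqref{showed} accounts for the bounded repetition. Your write-up would need the same device.
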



\begin{proof}
 Fix a nonzero $\zhat\in\range$.
Fix $\e\in(0,\delta/(4M))$ and  an integer $k\ge|\range|(1+2\e)/\e$. 
There are finitely 
many  points in $k^{-1}D_k$ so we can fix a single 
  integer $b$ such that   $y_{mb}(\xi)=mb\xi$ for all $m\in\Z_+$
and   $\xi\in k^{-1}D_k$. 

We construct a path from   each $x\in D_n\cap nK$ 
  to a  multiple of a  point $\xi(n,x)\in K_\delta\cap k^{-1}D_k$.  
Begin by writing  $x=\sum_{z\in\range} a_z z$ with $a_z\in\Z_+$ and $\sum_{z\in\range}a_z=n$. 
Let $m_n=\ce{(1+2\e)n/k}$ and $s_z^{(n)}=\ce{k a_z/((1+2\e)n)}$. 
Then 
	\[(1-\tfrac1{1+2\e})n^{-1}a_z-\tfrac1k\le n^{-1}a_z - k^{-1}s_z^{(n)}\le(1-\tfrac1{1+2\e})n^{-1}a_z.\]
This implies that \[\tfrac\e{1+2\e}\le 1-k^{-1}\sum_z s^{(n)}_z\le1-\tfrac1{1+2\e}<\tfrac\delta{2M}\]
 and \[\Big| k^{-1}\sum_{z\in\range} s^{(n)}_z z -n^{-1}x\Big|\le M \sum_{z\in\range} |k^{-1}s^{(n)}_z-n^{-1}a_z|\le M(1-\tfrac1{1+2\e})<\tfrac\delta2.\]
Define a point   $\xi(n,x)\in K_\delta\cap k^{-1}D_k$ by  
  \begin{align}\label{xi-steps}
  \xi(n,x)=k^{-1}\sum_{z\in\range} s^{(n)}_z z+\Big(1-k^{-1}\sum_{z\in\range} s^{(n)}_z\Big)\zhat.
  \end{align} 
Since $m_n s^{(n)}_z\ge a_z$ for each $z\in\range$, 
the sum above describes  
an admissible path  of $m_n k-n$ steps   
from $x$ to $m_n k\xi(n,x)$.    For each  $x\in D_n$ and each  $n$,  the number
of  $\zhat$ steps in 
this path is   at least 
\beq  m_n(k-\sum_{z\in\range} s^{(n)}_z)\ge m_n k\e/(1+2\e)\ge n\e.\label{zhat-nr}\eeq

Next, let $\ell_n$ be an integer such that $(\ell_n-1)b<m_n\le \ell_n b$. 
Repeat the steps of $k\xi(n,x)$ in \eqref{xi-steps}  $\ell_n b-m_n\le b$ times to go from $m_n k\xi(n,x)$ to $\ell_nkb\xi(n,x)=y_{\ell_n kb}(\xi(n,x))$.  Thus, the total number of 
steps to go from $x$ to $\ell_n k b\xi(n,x)$ is $r_n=\ell_n k b-n$.   
Recall that $b$ is a function of $k$ alone. So $r_n \le 3\e n$ 
for $n$ large enough, depending on $k, \e$. Denote this sequence of  steps by $\bfu(n,x)=(u_1,\dotsc,u_{r_n})$.

We develop an estimate.   
Abbreviate $\gbar(\w)=$ $\max_{z_{1,\ell}\in\range^\ell} |g(\w,z_{1,\ell})|$.
\beq\begin{aligned}
&\frac1n\log E\big[e^{n R^{\oneell}_n(g)}\one\{X_n/n\in K\}\big]\\ 
&= \frac1n\log \sum_{x\in D_n\cap nK} E\big[e^{n R^{\oneell}_n(g)}, X_n=x\big]\\
&\le \max_{x\in D_n\cap nK}
\frac1n\log   E\big[e^{(n-\ell) R^{\oneell}_{n-\ell}(g)}, X_n=x\big]\\ 
&\qquad\qquad + \max_{x\in D_n\cap nK} \max_{y\in \cup_{s=0}^\ell D_s}\frac {\ell} n  \gbar(T_{x-y}\w)+ \frac{C\log n}{n}\\
&\le \max_{x\in D_n\cap nK}
\frac1n\log   E\big[e^{\ell_n k b R^{\oneell}_{\ell_n kb}(g)}, X_{\ell_n kb}=\ell_nkb\xi(n,x)\big]\\
&\qquad\qquad
+\max_{x\in D_n\cap nK} \frac1n \sum_{i=1}^{r_n} \gbar(T_{x+u_1+\cdots+u_i}\w)
+\frac{r_n}n\log|\range|\\
&\qquad\qquad
+\max_{x\in D_n\cap nK}\max_{y\in \cup_{s=0}^\ell D_s} \frac {2\ell} n  \gbar(T_{x-y}\w)+ \frac{C\log n}{n}.
\end{aligned}\label{est-1}\eeq
As $n\to\infty$ the limsup of the term in the third-to-last line of the above display is 
 bounded above, for all $\w$,  by
\[(1+3\e)\sup_{\xi\in \Q^d\cap K_\delta\cap\Uset}\,\varlimsup_{n\to\infty}n^{-1}\log E\big[e^{n R_n^{\oneell}(g)}\one\{X_n=y_n(\xi)\}\big].\]
The proof of \eqref{pres-a} is complete once we show that a.s.  
\beq\label{to-show}\begin{aligned}
 &\varlimsup_{\e\to0}\varlimsup_{n\to\infty} \max_{x\in D_n} \frac1n\sum_{i=1}^{r_n} \gbar(T_{x+u_1+\cdots+u_i}\w)=0\\
\text{and }\quad&\varlimsup_{\e\to0}\varlimsup_{n\to\infty} \max_{x\in D_n}\max_{y\in \cup_{s=0}^\ell D_s} \frac 1 n  \gbar(T_{x-y}\w)=0.
 \end{aligned} \eeq

To this end, observe that the order in which the steps in $\bfu(n,x)$ are arranged
was so far immaterial. From \eqref{zhat-nr}   the ratio of zero steps to $\zhat$ steps 
is at most  $r_n/(n\e)\le 3$. 
Start path $\bfu(n,x)$ by alternating $\zhat$ steps with
blocks of at most 3 
zero steps, until $\zhat$ steps and  zero steps are exhausted.
After that fix an ordering  $\range\setminus\{0, \zhat\}=\{z_1,z_2,\dotsc\}$ and arrange the rest of the path $\bfu(n,x)$ to take first  all its $z_1$ steps, then all its $z_2$ steps,
and so on. This leads to  the bound
	\beq \sum_{i=1}^{r_n} \gbar(T_{x+u_1+\cdots+u_i}\w)\le 4\abs\range \max_{y\in x+\bfu(n,x)}\max_{z\in\range\setminus\{0\}}\sum_{i=0}^{r_n} \gbar(T_{y+iz}\w).\label{showed}\eeq
The factor 4 is for repetitions of the same $\gbar$-value due to zero steps.
By $y\in x+\bfu(n,x)$ we mean that  $y$ is on the path starting from $x$
and taking steps in $\bfu(n,x)$.   A similar bound develops for the second line of 
\eqref{to-show}.  Then the limits in \eqref{to-show} follow from membership in $\Ll$.
The lemma is proved.
\qed
\end{proof}

\begin{theopargself}
\begin{proof}[of Theorem \ref{th:pt2pt}.]  Part (a).  
Having proved Theorem \ref{th:pt2pt-rats}, the next step is to deduce the existence of $\pres_\ell(g,\zeta)$ as the limit \eqref{xipres}
 for irrational velocities $\zeta$, on the event of full $\P$-probability 
 where $\pres_\ell(g,\xi)$  exists for all rational $\xi\in\Uset$.

Let $\zeta\in\Uset$.  It comes with 
a convex representation $\zeta=\sum_{z\in\range_0}\beta_z z$ with $\beta_z>0$ for $z\in\range_0\subset\range$, and its path $\xhat_\centerdot(\zeta)$ is defined as in \eqref{xhat-def}.
Let  $\delta=\delta(\zeta) =\min_{z\in\range_0}\beta_z>0$. 

We approximate $\zeta$ with rational points from $\conv\range_0$.  
Let $\e>0$ and choose $\xi=\sum_{z\in\range_0}\alpha_z z$ with $\alpha_z\in[\delta/2,1]\cap\Q$, 
$\sum_z\alpha_z=1$, and $|\alpha_z-\beta_z|<\e$ for all $z\in\range_0$. 
%
Let $k\in\N$ be such that $k\alpha_z\in\N$ for all $z\in\range_0$. Let $m_n=\fl{k^{-1}(1+4\e/\delta)n}$ and 
$s_z^{(n)}=km_n\alpha_z-\lfloor n\beta_z\rfloor-b_z^{(n)}$.  Then, 
\beq s_z^{(n)}/n\to(1+4\e/\delta)\alpha_z-\beta_z\ge \e>0.   \label{szn}\eeq 
Thus $s_z^{(n)}\ge0$  for large enough $n$. 

Now, starting at $\xhat_n(\zeta)$ and taking each step $z\in\range_0$ exactly $s_z^{(n)}$ times arrives at 
$km_n\xi$. Denote this sequence  of steps by $\{u_i\}_{i=1}^{r_n}$,
with   $r_n=km_n-n\le (4\e/\delta) n$.   We wish to develop an 
estimate similar to those  in \eqref{temp n+1} and \eqref{est-1},  using again 
$\gbar(\w)=$ $\max_{z_{1,\ell}\in\range^\ell} |g(\w,z_{1,\ell})|$.  
Define 
 \begin{align*}
B(\w,n,\e, \kappa)&=\kappa\abs{\range}\cdot\max_{\abs{x}\le \kappa n} \max_{z\in\range\smallsetminus\{0\}}
 \sum_{i=0}^{\kappa \e n} \gbar(T_{x+iz}\w)  \\
&\qquad\qquad\qquad
 +\max_{x\in D_n}\max_{y\in \cup_{s=0}^\ell D_s}  {2\ell}  \gbar(T_{x-y}\w). 
\end{align*} 
Then develop
an  upper bound:  
\beq\begin{aligned}
&\log E\big[e^{km_n R_{k m_n}^{\oneell}(g)}\one\{X_{km_n}=km_n\xi\}\big]\\
&\qquad \ge \log E\big[e^{n R_n^{\oneell}(g)}\one\{X_n=\xhat_n(\zeta)\}\big] 
 -  \sum_{i=0}^{r_n-1} \gbar(T_{\xhat_n(\zeta)+u_1+\dotsm+u_i}\w)\\
&\qquad\qquad - \max_{y\in \cup_{s=0}^\ell D_s}  {2\ell}  \gbar(T_{\xhat_n(\zeta)-y}\w)
- (4\e/\delta) n\log|\range| \\[3pt]
&  \ge \log E\big[e^{n R_n^{\oneell}(g)}\one\{X_n=\xhat_n(\zeta)\}\big] 
 -  B(\w,n,\e, \kappa)  
- (4\e/\delta) n\log|\range|. \end{aligned}\label{zeta-ub}\eeq
To get  the last inequality above first  order the steps
of the  $\{u_i\}$ path as was done above to go from \eqref{to-show} to \eqref{showed}. 
In particular, the number of zero steps needs to be controlled.   If 
$0\in\range_0$, pick a step $  \zhat\in\range_0\smallsetminus\{0\}$, and from \eqref{szn} obtain that,
for large enough $n$, 
\[  \frac{s^{(n)}_0}{s^{(n)}_\zhat}\le \frac{2n\bigl((1+4\e/\delta)\alpha_0-\beta_0\bigr)}{n\e/2}
 \le 4\Bigl(1+\frac4\delta\Bigr). 
  \]
Thus we  can exhaust the zero steps by alternating blocks of $\ce{4(1+4/\delta)}$ 
zero steps with individual $\zhat$ steps.  Consequently in the sum on the second line 
of \eqref{zeta-ub}    we have a bound $c(\delta)$ on the number
of repetitions of individual $\gbar$-values.    To realize the domination by 
$B(\w,n,\e, \kappa)$ on the last line of \eqref{zeta-ub},  
pick $\kappa>c(\delta)$ and large enough so that $\kappa\e n\ge r_n$ and so that  $\{\abs{x}\le \kappa n\}$
covers   $\{\xhat_n(\zeta)+u_1+\dotsm+u_i: 0\le i\le r_n\}$. 

The point of formulating the error $B(\w,n,\e, \kappa)$ with the parameter $\kappa$
 is to control all  the  errors in \eqref{zeta-ub}  on a single event of $\P$-measure 1,
 simultaneously for all $\zeta\in\Uset$ and 
countably many $\e\searrow0$, with a choice of rational $\xi$ for each pair $(\zeta, \e)$. 
From $g\in\Ll$    follows that  $\P$-a.s. 
\[ \varlimsup_{\e\searrow0}\varlimsup_{n\to\infty}  n^{-1}B(\w,n,\e, \kappa)= 0 
\quad\text{ simultaneously for all $\kappa\in\N$.} \]


A similar argument, with $\bar m_n=\lfloor k^{-1}(1-4\e/\delta)n\rfloor$ and $\bar s_z^{(n)}=\lfloor n\beta_z\rfloor+b_z^{(n)}(\zeta)-k\bar m_n\alpha_z$,   gives
\beq\begin{aligned}
&\log E\big[e^{k\bar m_n R_{k \bar m_n}^{\oneell}(g)}\one\{X_{k\bar m_n}=k\bar m_n\xi\}\big]\\
& \le \log E\big[e^{n R_n^{\oneell}(g)}\one\{X_n=\xhat_n(\zeta)\}\big] + C\e n\log|\range| +B(\w,n,\e,\kappa). 
\end{aligned}\label{zeta-lb}\eeq

Now in \eqref{zeta-ub} and \eqref{zeta-lb} divide by $n$, let $n\to\infty$ and use the 
existence of the limit $\pres_\ell(g,\xi)$.  Since $\e>0$ can be taken
to zero,  we have obtained the following.     $\pres_\ell(g,\zeta)$ exists as the limit
\eqref{xipres}  for all 
$\zeta\in\Uset$ on an event of $\P$-probability $1$, and   
  \beq	\pres_\ell(g,\zeta)=\lim_{\xi_j\to\zeta}\pres_\ell(g,\xi_j),\label{pres-5}\eeq
whenever $\xi_j$ is a sequence of rational  convex combinations of $\range_0$ whose coefficients   converge to the coefficients $\beta_z$  of $\zeta$.

At this point the value $\pres_\ell(g,\zeta)$ appears to depend on the choice of the
convex representation  $\zeta=\sum_{z\in\range_0}\beta_z z$.  
We show that each choice   gives the same
value $\pres_\ell(g,\zeta)$ as a particular fixed representation.  
Let $\bar\Uset$ be the unique face containing
$\zeta$ in its relative interior and  $\bar\range=\range\cap\bar\Uset$.
Then we can fix a convex representation
$\zeta=\sum_{z\in\bar\range}\bar\beta_z z$ with $\bar\beta_z>0$ for all $z\in\bar\range$. 
As above, let $\xi_n$ be rational points from $\conv\range_0$ such that 
  $\xi_n\to\zeta$.  The fact that $\zeta$ can be expressed as a convex combination
  of $\range_0$  forces $\range_0\subset\bar\Uset$, and consequently 
$\xi_n\in\bar\Uset$. By Lemma \ref{lm:co1}, there are two   rational 
convex representations
$\xi_n=\sum_{z\in\range_0}\alpha^n_z z=\sum_{z\in\bar\range}\bar\alpha^n_z z$ with $\alpha_z^n\to\beta_z$ and $\bar\alpha_z^n\to\bar\beta_z$.
By Theorem \ref{th:pt2pt-rats} the value  $\pres_\ell(g,\xi_n)$ is independent of the convex representation of $\xi_n$.  Hence the limit in \eqref{pres-5} shows 
that  representations in terms of $\range_0$ and  in terms of $\bar\range$ 
lead to the same value $\pres_\ell(g,\zeta)$.

Part (b).   With the limit \eqref{xipres} in hand, 
  limit \eqref{pres-limit} and the variational formula  
  \eqref{sup pt2pt} follow from Lemma \ref{lm-pressure2} with $K=\Uset$. Theorem \ref{th:pt2pt} is proved.
\qed  
\end{proof}
\end{theopargself}




Proofs of the remaining theorems of the section follow. 

\begin{theopargself}
\begin{proof}[of Theorem \ref{th:deterministic}]
Fix a face $\Uset_0$ and  $\range_0=\range\cap\Uset_0$.
If $\xi$ is a rational point in $\ri\Uset_0$,  then write $\xi=\sum_{z\in\range_0}\alpha_z z$ with rational $\alpha_z>0$ (consequence of Lemma A.1 of \cite{Ras-Sep-Yil-12-}). 
Let $k\in\N$   such that $k\alpha_z\in\Z$ for
each $z$. Let  $z\in\range_0$. There is a path of $k-1$ steps
from $(m-1)k\xi+z$ to $mk\xi$. 
Proceed as   in \eqref{temp n+1} to reach 
\begin{align*}
&\pres_\ell(g,\xi)
\ge\varliminf_{m\to\infty}\frac1{mk}\log
 E\Big[e^{mk R_{mk}^{\oneell}(g)}, X_{mk}=mk\xi\,\Big|\,X_1=z\Big]\\
&\ge\varliminf_{m\to\infty}\frac1{mk}\log
 E\Big[e^{((m-1)k+1) R_{(m-1)k+1}^{\oneell}(g)},\\
 &\qquad\qquad\qquad\qquad\qquad\qquad\qquad X_{(m-1)k+1}=(m-1)k\xi+z\,\Big|\,X_1=z\Big]\\
&=\pres_\ell(g,\xi)\circ T_z.
\end{align*}
Thus  $\pres_\ell(g,\xi)$ is $T_z$-invariant for each  $z\in\range_0$ so by ergodicity 
$\pres_\ell(g,\xi)$ is deterministic. This holds for $\P$-a.e.\ $\w$ simultaneously for all rational $\xi\in\ri\Uset_0$.
Since $\pres_\ell(g,\cdot)$ at irrational points of $\ri\Uset_0$ can be obtained through \eqref{pres-5} from its values at rational points, the claim follows for all
 $\zeta\in\ri\Uset_0$. 
\qed
\end{proof}
\end{theopargself}


\begin{theopargself}
\begin{proof}[of Theorem \ref{th:regular}]
The logical order of the proof is not the same as the ordering of the statements
in the theorem.  
 First we establish concavity for rational points in $\ri\Uset$ via  the Markov property.
For  $t\in\Q\cap[0,1]$ and $\xi',\xi''\in\Q^d\cap\ri\Uset$ 
  choose $k$ so that $kt\in\Z_+$, $kt\xi'\in\Z^d$, and $k(1-t)\xi''\in\Z^d$. Then, as   in \eqref{subadd},
\beq\begin{aligned}
&\log E\Big[e^{mk R_{mk}^{\oneell}(g)}, X_{mk}=mk(t\xi'+(1-t)\xi'')\Big]\\
&\quad\ge \log E\Big[e^{mkt R_{mkt}^{\oneell}(g)}, X_{mkt}=mkt\xi'\Big]\\
&\qquad\quad+ \log E\Big[e^{mk(1-t) R_{mk(1-t)}^{\oneell}(g\circ T_{mkt\xi'})}, X_{mk(1-t)}=mk(1-t)\xi''\Big]\\
&\qquad\quad-2A_\ell(T_{mkt\xi'}\w).
\end{aligned}\label{conc2}\eeq
Divide by $mk$ and let $m\to\infty$.  
 On $\ri\Uset$   $\pres_\ell(g,\cdot)$ is deterministic  
(Theorem \ref{th:deterministic}),  hence  the second (shifted)  logarithmic moment generating 
function on the right of \eqref{conc2} converges to its limit at least in probability, hence a.s. along
a subsequence.   
In the limit we  get  
\begin{align}\label{concavity}
\pres_\ell(g,t\xi'+(1-t)\xi'')\ge t\pres_\ell(g,\xi')+(1-t)\pres_\ell(g,\xi'').
\end{align} 

To get concavity on all of $\ri\Uset$,  approximate 
arbitrary points of $\ri\Uset$ with rational convex combinations
  so that   limit   \eqref{pres-5}
can be used to pass along the concavity.  



\begin{remark}
 In the totally ergodic case    Theorem \ref{th:deterministic}   implies that  $\pres_\ell(g,\zeta)$ is deterministic on all of $\Uset$, except possibly at  $\zeta=0\in\ex\Uset$.  
 If $0$ is among $\{\xi',\xi''\}$  then  take $\xi'=0$  in   \eqref{conc2}, so that, 
as the limit is taken
to go from \eqref{conc2} to \eqref{concavity},   we can take advantage of the deterministic limit 
$\pres_\ell(g,\xi'')$ for 
  the shifted term 
on the right of \eqref{conc2}. 
Thus, \eqref{concavity} holds for all rational  $\xi',\xi''\in\Uset$.  The subsequent limit to 
non-rational points proceeds as above.  
\label{toterg-rem}\end{remark}

\medskip

Next  we address lower semicontinuity of $\pres_\ell(g,\zeta)$ in $\zeta\in\Uset$.
Fix   $\zeta$ and  pick  $\Uset\ni\zeta_j\to\zeta$
that achieves the liminf of $\pres_\ell(g,\cdot)$ at $\zeta$. Since $\range$ is finite, one can find a further subsequence that always stays inside the convex hull
$\Uset_0$ of some set $\range_0\subset\range$ of at most $d+1$ affinely independent vectors. Then, $\zeta\in\Uset_0$ and we can write the convex
combinations $\zeta=\sum_{z\in\range_0}\beta_z z$ and $\zeta_j=\sum_{z\in\range_0}\beta_z^{(j)} z$.  
Furthermore, as before, $\beta_z^{(j)}\to\beta_z$ as $j\to\infty$. Let $\hat\range_0=\{z\in\range_0:\beta_z>0\}$ and define
$\delta=\min_{z\in\hat\range_0}\beta_z>0$.  

Fix $\e\in(0,\delta/2)$ and take $j$ large enough so that $|\beta_z^{(j)}-\beta_z|<\e$ for all $z\in\range_0$. Let $m_n=\lceil (1+4\e/\delta)n\rceil$ and
$s_z^{(n)}=\lfloor m_n\beta_z^{(j)}\rfloor + b_z^{(n)}(\zeta_j)-\lfloor n\beta_z\rfloor-b_z^{(n)}(\zeta)$ for $z\in\range_0$. 
(If $\beta_z=\beta_z^{(j)}=0$, then simply set $s_z^{(n)}=0$.) Then, for $n$ large enough, $s_z^{(n)}\ge0$ for each $z\in\range_0$.   
Now, proceed as in the proof of \eqref{pres-5}, by finding a path from $\xhat_n(\zeta)$ to $\xhat_{m_n}(\zeta_j)$. After taking $n\to\infty$, $j\to\infty$, then $\e\to0$,
we arrive at \[\varliminf_{\Uset\ni\zeta'\to\zeta}\pres_\ell(g,\zeta')\ge\pres_\ell(g,\zeta).\]
Note that here random limit values are perfectly acceptable.

\begin{remark}
We can see here why upper semicontinuity (and hence continuity to the boundary) may in principle not hold: constructing a path from $\zeta_j$ to $\zeta$ is not
necessarily possible since $\zeta_j$ may have non-zero components on $\range_0\smallsetminus\hat\range_0$.
 \end{remark}

By lower semicontinuity  the supremum   in \eqref{sup pt2pt} 
can be restricted to $\zeta\in\ri\Uset$.   By Theorem \ref{th:deterministic} 
$\pres_\ell(g,\zeta)$ is deterministic on $\ri\Uset$   under an ergodic  $\P$, and consequently 
$\pres_\ell(g)$ is deterministic.  

\medskip

Combining Theorems \ref{th:pt2pt} and \ref{th:deterministic} 
 and the paragraphs above, we now know
that under an ergodic $\P$,  we have the function 
$-\infty< \pres_\ell(g,\zeta,\w)\le \infty$,  $\P$-a.e. 
defined, lower semicontinuous for $\zeta\in\Uset$ and concave and
deterministic  for  
$\zeta\in\ri\Uset$. 
Lower semicontinuity and   compactness of   $\Uset$   imply that  
$\pres_\ell(g,\cdot\,,\w)$ is uniformly bounded below with a bound 
that can depend
on $\w$.  

 Assume now that $\pres_\ell(g)<\infty$.   Then upper boundedness 
 of  $\pres_\ell(g,\cdot\,,\w)$ comes from \eqref{sup pt2pt}.  
 As a finite concave function $\pres_\ell(g,\cdot)$ is continuous on the
convex open set $\ri\Uset$.    Since it is bounded below,
by \cite[Theorem~10.3]{Roc-70}   $\pres_\ell(g,\cdot)$
has a unique continuous extension from the relative interior to the whole of $\Uset$. 
This extension is deterministic since it comes from a deterministic function
on $\ri\Uset$. 
To see that this extension agrees with the    upper semicontinuous regularization, 
consider this general situation. 

Let  $f$ be a bounded lower semicontinuous function
 on $\Uset$ that is concave on $\ri\Uset$. Let $g$ be the continuous extension
 of $f\vert_{\ri\Uset}$  and $h$ the upper semicontinuous regularization of $f$ on $\Uset$.
For $x$ on the relative  boundary  find $\ri\Uset\ni x_n\to x$. Then  
$g(x) = \lim g(x_n) = \lim f(x_n) \ge f(x)$ and    
so $f \le  g$ and consequently $h \le g$.
Also
$g(x)= \lim g(x_n) = \lim f(x_n) = \lim h(x_n) \le h(x)$ and so 
$g \le h$.
 
 \medskip

Finally we check part (a) of the theorem.  
If $\pres_\ell(g)=\infty$ then there exists a sequence $\zeta_n\in\ri\Uset$ 
such that $\pres_\ell(g,\zeta_n)\to\infty$. One can assume $\zeta_n\to\zeta\in\Uset$.
Let $\zeta'$ be any point in  $\ri\Uset$. Pick $t\in(0,1)$ small enough for $\zeta''_n=(\zeta'-t\zeta_n)/(1-t)$ to be in $\ri\Uset$ for $n$ large enough. Then, 
\[\pres_\ell(g,\zeta')\ge t\pres_\ell(g,\zeta_n)+(1-t)\pres_\ell(g,\zeta''_n).\]
Since $\pres_\ell(g,\cdot)$ is bounded below on $\ri\Uset$, taking $n\to\infty$ in the above display implies that $\pres_\ell(g,\zeta')=\infty$.
 \qed
\end{proof}
\end{theopargself}

\section{Continuity   in the  i.i.d.\ case} 
\label{sec:cont} 

We begin with   $L^p$ continuity of the free energy in the
potential $g$.

\begin{lemma} \label{lm:animal}
Let $\Uset_0$ be a face of $\Uset$ (the choice $\Uset_0=\Uset$ is allowed),
and let $\range_0=\range\cap\Uset_0$ so that $\Uset_0=\conv\range_0$.  
Assume  
$0\not\in\Uset_0$.   Then  an  admissible $n$-step 
path from $0$ to a point 
in $n\Uset_0$ cannot  visit the same point twice. 

  {\rm (a)} 
Let $h\ge 0$ be a measurable function on $\Omega$ with  
  the   $r_0$-separated i.i.d.\ property.   
Then there is a constant $C=C(r_0,d, M)$   such that,  $\P$-almost surely,  
\beq
\varlimsup_{n\to\infty}   \max_{\substack{x_{0,n-1}: \\ x_k-x_{k-1}\in\range_0}}  n^{-1}
\sum_{k=0}^{n-1}  h(T_{x_k}\w)  
\le  C   \int_0^\infty \P\{ h\ge s\}^{1/d}\,ds .
\label{animal1}\eeq
If   $h\in L^p(\P)$ for some  $p>d$ then 
the right-hand side of \eqref{animal1} is finite by Chebyshev's inequality.

{\rm (b)}  Let $f, g:\bigom_\ell\to\R$ be  measurable functions with the   $r_0$-separated i.i.d.\ property. Then with the
same constant $C$ as in \eqref{animal1} 
\beq\begin{aligned}
&\varlimsup_{n\to\infty}   \sup_{\zeta\in\Uset_0}\, \Bigl\lvert 
n^{-1}\log E\big[e^{nR_{n}^{\oneell}(f)}\one\{X_{n}=\xhat_n(\zeta)\}\big]\\
&\qquad\qquad\qquad \qquad 
\;-\; n^{-1}\log E\big[e^{nR_{n}^{\oneell}(g)}\one\{X_{n}=\xhat_n(\zeta)\}\big]
\Bigr\rvert \\
&\qquad \le   C   \int_0^\infty \P\Bigl\{\w: 
 \max_{z_{1,\ell}\in\range^\ell}\abs{f(\w,z_{1,\ell})-g(\w,z_{1,\ell})}\ge s\Bigr\}^{1/d}\,ds .
\end{aligned}\label{animal2}\eeq
 Assume  additionally that  $f(\cdot\,,z_{1,\ell})$, $g(\cdot\,,z_{1,\ell})\in L^p(\P)$ $\forall z_{1,\ell}\in\range^\ell$
for some  $p>d$.  Then  $f,g\in\Ll$ 
and  for $\zeta\in\Uset_0$  the limits 
 $\pres_\ell(f,\zeta)$ and $\pres_\ell(g,\zeta)$
are finite and deterministic  and satisfy 
\beq
 \sup_{\zeta\in\Uset_0}\, \abs{\pres_\ell(f,\zeta)-\pres_\ell(g,\zeta)} \, \le \,  
C\E \Bigl[\; \max_{z_{1,\ell}\in\range^\ell}\abs{f(\w,z_{1,\ell})-g(\w,z_{1,\ell})}^p\,\Bigr].  
\label{animal3}\eeq
Strengthen the assumptions further with $0\notin\Uset$.  
Then  $\pres_\ell(f)$ and  $\pres_\ell(g)$ are finite and deterministic  and satisfy 
 \beq
 \abs{\pres_\ell(f)-\pres_\ell(g)} \, \le \,  
C\E \Bigl[\; \max_{z_{1,\ell}\in\range^\ell}\abs{f(\w,z_{1,\ell})-g(\w,z_{1,\ell})}^p\,\Bigr]. 
\label{animal4}\eeq
\label{lm:animal1}\end{lemma}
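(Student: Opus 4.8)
The plan is to prove the geometric claim first, then the estimate \eqref{animal1}, and then derive \eqref{animal2}, \eqref{animal3}, \eqref{animal4} in turn, each a short consequence of the previous. For the geometric statement: since $0\notin\Uset_0$ and $\Uset_0$ is compact convex, there is $\uhat\in\R^d$ with $\uhat\cdot\zeta>0$ for all $\zeta\in\Uset_0$, hence $\uhat\cdot z>0$ for all $z\in\range_0$. Along an admissible $n$-step path $x_{0,n}$ with steps in $\range_0$ the quantity $\uhat\cdot x_k$ is strictly increasing in $k$, so the path cannot revisit a point; in particular $x_0,\dots,x_{n-1}$ are $n$ distinct lattice points, all lying in a bounded region of diameter $O(n)$.

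For part (a): fix $\P$-a.e.\ $\w$. The key combinatorial input is that an $n$-step admissible $\range_0$-path visits $n$ distinct sites inside a ball of radius $Mn$; among these, a $1/r_0^d$-fraction (up to constants) can be chosen $r_0$-separated, so on those the values $h(T_{x_k}\w)$ are i.i.d. I would bound the path sum $\sum_k h(T_{x_k}\w)$ by decomposing $\Z^d$ into the $r_0^d$ residue classes mod $r_0$ and, on each class, bounding the contribution by a maximum of sums of i.i.d.\ nonnegative variables over self-avoiding-type configurations; this reduces \eqref{animal1} to a standard ``greedy lattice animal / last-passage'' estimate. The relevant fact — that for i.i.d.\ nonnegative $\{h_x\}$ the maximal average $n^{-1}\sum_{k}h_{x_k}$ over $n$-step lattice paths (equivalently over connected lattice animals of size $n$) is a.s.\ bounded by $C\int_0^\infty\P\{h\ge s\}^{1/d}\,ds$ — is exactly the Cox–Gandolfi–Griffin–Kesten / Martin lattice-animal result, and this is the technical heart. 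I expect this invocation, together with handling the $r_0$-separation bookkeeping uniformly in $\w$ and in the path, to be the main obstacle; everything else is soft. The finiteness remark follows since $\int_0^\infty\P\{h\ge s\}^{1/d}\,ds\le \int_0^\infty (s^{-p}\E h^p)^{1/d}\,ds<\infty$ when $p>d$ (after splitting the integral at $s=1$ and using $\P\{h\ge s\}\le 1$ on $[0,1]$).

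For part (b): write $R_n^{\oneell}(f)-R_n^{\oneell}(g)=n^{-1}\sum_{k=0}^{n-1}\bigl(f-g\bigr)(T_{X_k}\w,Z_{k+1,k+\ell})$ and bound the integrand pointwise by $h(T_{X_k}\w):=\max_{z_{1,\ell}}\abs{f(\w,z_{1,\ell})-g(\w,z_{1,\ell})}$, which is $\ge0$ and has the $r_0$-separated i.i.d.\ property. Inside the expectation defining the point-to-point partition function, $e^{nR_n^{\oneell}(f)}\le e^{nR_n^{\oneell}(g)}e^{\sum_k h(T_{X_k}\w)}$, and since every path contributing to $\one\{X_n=\xhat_n(\zeta)\}$ with $\zeta\in\Uset_0$ uses only $\range_0$-steps, the path sum is dominated by $\max_{x_{0,n-1}:x_k-x_{k-1}\in\range_0}\sum_k h(T_{x_k}\w)$, which is independent of the path and of $\zeta$. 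Dividing by $n$, taking $\varlimsup$, and applying part (a) gives the upper bound in \eqref{animal2} in one direction; the symmetric argument (swap $f,g$) gives the absolute value. For \eqref{animal3}: under the $L^p$ hypothesis, $f,g\in\Ll$ by the mixing discussion after Definition \ref{cL-def} (the $r_0$-separated i.i.d.\ property gives exponential mixing, and $p>d$ suffices), so $\pres_\ell(f,\zeta),\pres_\ell(g,\zeta)$ exist, are finite by the finiteness remark in part (a), and are deterministic by Theorem \ref{th:deterministic} applied to the face $\Uset_0$ (here one uses that restricting to $\range_0$-steps keeps the walk pinned to $\Uset_0$, and that $\P$ is ergodic — indeed i.i.d.). Then \eqref{animal2} together with $\int_0^\infty\P\{h\ge s\}^{1/d}\,ds\le C'\E[h^p]$ (for a constant depending on $p,d$; absorb $C'$ into $C$) yields \eqref{animal3}, after noting the left side is deterministic so the a.s.\ bound is an honest inequality. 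Finally \eqref{animal4}: when $0\notin\Uset$, take $\Uset_0=\Uset$ in everything above; then $\pres_\ell(f)=\sup_{\zeta\in\Uset}\pres_\ell(f,\zeta)$ and likewise for $g$ by Theorem \ref{th:pt2pt}(b), and $\abs{\sup_\zeta\pres_\ell(f,\zeta)-\sup_\zeta\pres_\ell(g,\zeta)}\le\sup_\zeta\abs{\pres_\ell(f,\zeta)-\pres_\ell(g,\zeta)}$, so \eqref{animal4} is immediate from \eqref{animal3}. Finiteness and determinism of $\pres_\ell(f),\pres_\ell(g)$ follow from the same facts.
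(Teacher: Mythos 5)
Your proposal follows essentially the same route as the paper: geometric separation for the no-revisit property, coarse-graining by the residue classes mod $r_0$ to reduce to i.i.d.\ weights, Martin's greedy-lattice-animal bound for \eqref{animal1}, the pointwise estimate $\abs{R_n^{\oneell}(f)-R_n^{\oneell}(g)}\le n^{-1}\sum_k \max_{z_{1,\ell}}\abs{f-g}(T_{x_k}\w,z_{1,\ell})$ for \eqref{animal2}, and then Chebyshev for \eqref{animal3}–\eqref{animal4}. One small gap: for the preliminary geometric claim you assume the path takes $\range_0$-steps, but the claim is about arbitrary admissible paths ending in $n\Uset_0$; one must first argue that such a path can only use $\range_0$-steps (because $n^{-1}x=n^{-1}\sum z_i$ is a convex representation of a point of the face $\Uset_0$, so by the face property all $z_i\in\range_0$), after which your separating-functional argument finishes it. Also be a bit careful with the coarse-grained path: the $y_i$'s are $\ell^\infty$-nearest neighbors but not necessarily nearest neighbors, so you embed them into a genuine lattice animal of size $O(n)$ before applying the lattice-animal theorem; your phrase ``self-avoiding-type configurations'' glosses over this. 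Your derivation of \eqref{animal4} from \eqref{animal3} via $\abs{\sup_\zeta A(\zeta)-\sup_\zeta B(\zeta)}\le\sup_\zeta\abs{A(\zeta)-B(\zeta)}$ and \eqref{sup pt2pt} is a slightly different (and arguably cleaner) shortcut than the paper, which instead repeats the bound directly for the unconstrained partition function; both are fine.
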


\begin{proof}   If $x\in n\Uset_0$ and $x=\sum_{i=1}^n z_i$ gives an admissible
path to $x$, then  $n^{-1}x=n^{-1} \sum_{i=1}^n z_i$ gives a convex 
representation of $n^{-1}x\in\Uset_0$ which then cannot use points $z\in\range\smallsetminus\range_0$.  By the assumption $0\notin\Uset_0$, points from $\range_0$ cannot sum to $0$
and consequently a loop in an $\range_0$-path is impossible.

\medskip

 Part (a).  We can assume that $r_0>M=\max\{\abs z:z\in\range\}$.  
We bound the quantity on the left of \eqref{animal1} with a greedy lattice animal
\cite{Cox-Gan-Gri-93,Gan-Kes-94,Mar-02}
after a suitable coarse graining of the lattice.   Let $B=\{0,1,\dotsc, r_0-1\}^d$ be
the cube whose copies $\{r_0y+B: y\in\Z^d\}$ tile the lattice.  
Let $\cA_n$ denote the set of connected subsets $\xi$ of $\Z^d$ of size $n$ that
contain the origin (lattice animals).   

Since the  $x_k$'s are distinct,   
\begin{align*}
&\sum_{k=0}^{n-1}  h(T_{x_k}\w)  =  \sum_{u\in B}\sum_{y\in \Z^d}
\sum_{k=0}^{n-1}  \one_{\{x_k=r_0y+u\}}  h(T_{r_0y+u}\w) \\
&\qquad \le   \sum_{u\in B}\sum_{y\in \Z^d}
  \one_{\{x_{0,n-1}\cap (r_0y+B)\ne \emptyset\}}  h(T_{u+r_0y}\w) \\
&\qquad \le   \sum_{u\in B} \max_{\xi\in\cA_{n(d-1)}} \sum_{y\in \xi}  h(T_{u+r_0y}\w).  
\end{align*}
The   last step works as follows.  Define first a vector $y_{0,n-1}\in(\Z^{d})^n$ from
the conditions $x_i\in r_0y_i+B$, $0\le i<n$.  
Since $r_0$ is larger than the maximal step size $M$,
$\abs{y_{i+1}-y_i}_\infty\le 1$.     Points $y_i$ and $y_{i+1}$ may fail to be nearest neighbors,
but by filling in at most $d-1$ intermediate points we get a nearest-neighbor sequence.
This sequence can have repetitions and  can have fewer than $n(d-1)$ entries,  but it 
 is contained in some lattice animal $\xi$ of $n(d-1)$ lattice points.  

We can assume that the right-hand side of \eqref{animal1} is finite.  This and
the fact that   
$\{ h(T_{u+r_0y}\w)   : y\in \Z^d\}$ are i.i.d. allows us to apply 
  limit (1.7) of Theorem 1.1 in \cite{Mar-02}:  for a finite constant $c$ and 
$\P$-a.s.
\[   \varlimsup_{n\to\infty}   \max_{\substack{x_{0,n-1}: \\ x_k-x_{k-1}\in\range_0}}  n^{-1}
\sum_{k=0}^{n-1}  h(T_{x_k}\w) 
\le  \abs{B} (d-1) c  \int_0^\infty \P\{ h\ge s\}^{1/d}\,ds .
\]
With the volume $\abs{B}=r_0^d$ this gives  \eqref{animal1}.  

\medskip

Part (b). 
 Write $f=g+(f-g)$ in the exponent to get  an estimate, uniformly in $\zeta\in\Uset_0$: 
\beq\begin{aligned}
&n^{-1}\log E\big[e^{nR_{n}^{\oneell}(f)}\one\{X_{n}=\xhat_n(\zeta)\}\big]\\
&\qquad \le 
  n^{-1}\log E\big[e^{nR_{n}^{\oneell}(g)}\one\{X_{n}=\xhat_n(\zeta)\}\big] \\
&\qquad\quad+  \max_{\substack{x_{0,n+\ell-1}: \\ x_k-x_{k-1}\in\range_0}}   n^{-1}
\sum_{k=0}^{n-1}  \abs{f(T_{x_k}\w, z_{k+1,k+\ell})  - g(T_{x_k}\w, z_{k+1,k+\ell}) }. 
\end{aligned}\label{animal8}\eeq 
Switch the roles of $f$ and $g$ to get a bound on the absolute difference.  Apply part (a) to get \eqref{animal2}.

By Lemma A.4 of \cite{Ras-Sep-Yil-12-}  the $L^p$ assumption
with $p>d$    implies that $f, g\in\cL$.    Finiteness of $\pres_\ell(f,\zeta)$ 
comes from \eqref{animal2} with $g=0$. 
 Chebyshev's inequality bounds the right-hand side of  \eqref{animal2} with
 the right-hand side of  \eqref{animal3}. 

To get \eqref{animal4}  start with \eqref{animal8} without the indicators inside the
expectations and with $\range_0$ replaced by $\range$.   
 \qed
\end{proof}

Next the continuity of $\pres_\ell(g,\zeta)$ as a 
function of $\zeta$ all the way to the relative boundary in the i.i.d.\ case. 
The main result is part (a) below. Parts (b) and (c) come without extra work.  

\begin{theorem}\label{th:pt2pt cont}
Let $\P$ be an i.i.d.\ product measure as described in Example \ref{ex:product}
and $p>d$.   
Let $g:\bigom_\ell\to\R$ be a function such that  for each $z_{1,\ell}\in\range^\ell$, 
  $g(\cdot,z_{1,\ell}) $ is a local function of $\w$ and a member of $L^p(\P)$.  
 
{\rm (a)} If $0\not\in\Uset$, then $\pres_\ell(g,\zeta)$ is continuous on $\Uset$.

{\rm (b)} If $0\in\ri\Uset$ and $g$ is bounded above, then $\pres_\ell(g,\zeta)$ is continuous on $\Uset$.

{\rm (c)} If $0$ is on the relative boundary of $\Uset$  and if $g$ is bounded above, then
$\pres_\ell(g,\zeta)$ is continuous on $\ri\Uset$, at nonzero extreme points of $\Uset$, and at any point $\zeta$ such that the face $\Uset_0$  satisfying $\zeta\in\ri\Uset_0$
does not contain $\{0\}$. 
\end{theorem}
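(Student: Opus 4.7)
The plan is to reduce to bounded potentials via the $L^p$-continuity from Lemma~\ref{lm:animal1} and then prove upper semicontinuity of $\pres_\ell(g,\cdot)$ at the listed boundary points by a path-surgery argument exploiting the i.i.d.\ structure. Theorem~\ref{th:regular} already gives lower semicontinuity on $\Uset$ and continuity on $\ri\Uset$, so only upper semicontinuity at the boundary points in each part is new.

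For the reduction, truncate by $g_K=g\vee(-K)$, and in case (a), additionally cap by $K$ from above. For any face $\Uset_0$ of $\Uset$ with $0\notin\Uset_0$, inequality \eqref{animal3} gives
\[
\sup_{\zeta\in\Uset_0}\bigl|\pres_\ell(g,\zeta)-\pres_\ell(g_K,\zeta)\bigr|
\le C\,\E\bigl[\max_{z_{1,\ell}\in\range^\ell}|g-g_K|^p\bigr],
\]
and the right side tends to $0$ as $K\to\infty$ by $g(\cdot,z_{1,\ell})\in L^p(\P)$ and dominated convergence. In (a) this is applied with $\Uset_0=\Uset$; in (b) it applies to every proper face, since $0\in\ri\Uset$ forces $0\notin\Uset_0$ for proper $\Uset_0$; in (c) it applies to the faces named in the statement. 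Uniform limits of continuous functions are continuous, so we may assume $g$ is bounded.

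For the main step, fix a boundary point $\zeta\in\ri\Uset_0$ with $0\notin\Uset_0$ and a sequence $\zeta_n\to\zeta$ in $\Uset$. Writing $\zeta_n=\sum_{z\in\range}\beta_z^{(n)}z$, Theorem~18.1 of \cite{Roc-70} forces $\beta_z^{(n)}\to 0$ for every $z\notin\range_0:=\range\cap\Uset_0$, so $\e_n:=\sum_{z\notin\range_0}\beta_z^{(n)}\to 0$. Given an admissible $n$-step path to $\xhat_n(\zeta_n)$, reorder its steps so all non-$\range_0$ steps appear last, producing a ``head'' of $n(1-\e_n)+O(1)$ $\range_0$-steps ending near $n(1-\e_n)\zeta$ plus a short tail. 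Since $0\notin\Uset_0$, $\range_0$-paths are loop-free (Lemma~\ref{lm:animal1}), so the head concatenates with an $\range_0$-bridge of length $O(n(\e_n+|\zeta_n-\zeta|))$ that reaches $\xhat_{m_n}(\zeta)$. Boundedness of $g$ absorbs the tail and bridge contributions; shift-invariance of $\P$ together with the $r_0$-separated i.i.d.\ structure identifies the head's partition function with $Z_{m_n}^\zeta$ on a shifted environment up to a factor $e^{o(n)}$. Dividing by $n$ and letting $n\to\infty$ gives $\limsup_n\pres_\ell(g,\zeta_n)\le\pres_\ell(g,\zeta)$, hence continuity at $\zeta$.

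Case (b) follows from the same surgery: with $0\in\ri\Uset$, the head's endpoint can be adjusted by inserting null $\range$-cycles, and the hypothesis that $g$ is bounded above is precisely what prevents these cycles from inflating the partition function from above. A nonzero extreme point $\zeta$ in case (c) is handled separately: the unique admissible $n$-step path to $n\zeta$ repeats the step $\zeta$, so $\pres_\ell(g,\zeta)=-\log|\range|+\E[g(\w,(\zeta,\dotsc,\zeta))]$ by the ergodic theorem, and upper semicontinuity at $\zeta$ follows by restricting $Z_n^{\zeta_n}$ to paths that agree with the all-$\zeta$ path outside $o(n)$ positions. The main technical obstacle is the reordering in the surgery: swapping steps in the middle of a path translates every subsequent site where $g$ is evaluated, altering $nR_n^\ell(g)$; this translation is controlled in the i.i.d.\ setting by shift-invariance of $\P$, locality of $g$ in $\w$, and the $r_0$-separated independence, supplemented by the animal bound of Lemma~\ref{lm:animal1}(a) on the boundary layer where steps are swapped.
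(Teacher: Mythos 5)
Your reduction to bounded potentials via \eqref{animal3} matches the paper's reduction exactly, and your identification of what remains (upper semicontinuity at the stated boundary points) is right. The problem is the surgery step, which you flag yourself as the main technical obstacle but do not resolve.

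Reordering the steps of a path so that all $\range_0$-steps come first changes $x_k$ for almost every $k$, hence changes every summand $g(T_{x_k}\w,z_{k+1,k+\ell})$. For a fixed $\w$ there is no relationship between the energies of the original and reordered paths, and the claim that shift-invariance of $\P$, locality, and the lattice-animal bound ``control'' this does not survive scrutiny. Shift-invariance controls the law of the energy under $\P$, not the $\P$-a.s.\ comparison you need; the animal bound controls the magnitude of energy accumulated along \emph{any} self-avoiding path, but says nothing about the difference between two specific rearrangements. Moreover, since the $\range_0$-steps of a path to $\xhat_n(\zeta_n)$ differ path by path, the ``heads'' do not share a common endpoint or common environment slab, so there is no single $Z_{m_n}^\zeta(\text{shifted }\w)$ they can all be dominated by. The ancillary claims are also off: at a nonzero extreme point, ``restricting $Z_n^{\zeta_n}$ to paths agreeing with the all-$\zeta$ path'' gives a \emph{lower} bound on the partition function, which cannot yield upper semicontinuity.

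The paper avoids reordering entirely. It keeps each path's temporal order, introduces regeneration times in the $\uhat$-direction to split the path into \emph{good} segments (pure $\range_0$, confined to disjoint slabs) and \emph{bad} segments (Lemma~\ref{lm-cont1} bounds their total length by $C\e mk$), and encodes the bad data in a skeleton $\bfv$ of at most $C(mk)^{c_1}e^{mkh(\e)}$ possible values (Lemma~\ref{lm:v-count}). For each fixed $\bfv$ it modifies the \emph{environment} rather than the path: the slabs containing bad segments are cut out and the remaining good slabs are glued together, which preserves the value $\w^\bfv_{x^\bfv_t}=\w_{x_{t+s}}$ on good segments and, by the i.i.d.\ product structure, gives $\w^\bfv\overset{d}{=}\w$. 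Boundedness and $g\le 0$ absorb the $O(\e mk)$ straddling terms. The concentration inequality of Lemma~\ref{lm:conc} then controls each skeleton's contribution, and the subexponential count in $\e$ closes the union bound. That environment-surgery/concentration mechanism is what your proposal is missing, and it is not a detail one can paper over with shift-invariance.
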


In (b) and (c) we assume $g$ bounded above because otherwise   $\pres_\ell(g)=\infty$ is possible. If $g$ is unbounded above and a  function of $\w$ alone and if admissible paths can form loops, then $\pres_\ell(g)=\infty$ because the walk can look for arbitrarily high values of $g(T_x\w)$ and keep returning to  $x$ 
forever.  Then by Theorem \ref{th:regular}(a) also $\pres_\ell(g,\zeta)=\infty$ for all $\zeta\in\ri\Uset$.

In certain situations our proof technique can be pushed   up to faces that
 include $0$. For example, 
for $\range=\{(1,0),(0,1),(0,0)\}$  $\pres_\ell(g,\zeta)$ is continuous in $\zeta\in\Uset\smallsetminus\{0\}$.

\begin{theopargself}
\begin{proof}[of Theorem {\ref{th:pt2pt cont}}]
This continuity argument was inspired by the treatment of the case
$\range=\{e_1,\dotsc, e_d\}$ in \cite{Mor-10-} and \cite{Geo-11}.

By Lemma A.4 of \cite{Ras-Sep-Yil-12-}  the $L^p$ assumption
with $p>d$    implies that $g\in\cL$. 
By Lemma \ref{lm:animal} in case (a),  and by the upper bound assumption in the
other cases,  
  $\pres_\ell(g)<\infty$.  Thereby    $\pres_\ell(g,\cdot)$ is bounded  on $\Uset$ 
  and continuous on $\ri\Uset$  
(Theorem \ref{th:regular}).  
Since $\pres_\ell(g,\cdot)$ is lower semicontinuous, it suffices to prove upper semicontinuity at the relative boundary of $\Uset$.
Let $\zeta$ be a point on the relative boundary of $\Uset$.

We begin by reducing the proof to the case of a bounded $g$.  We can approximate $g$
in $L^p$ with a bounded function.  In part (a) we can apply \eqref{animal3} to 
$\Uset_0=\Uset$.  Then  the uniformity in $\zeta$  of \eqref{animal3} implies that
it suffices to prove  upper semicontinuity in the case of bounded $g$.   In parts (b) and (c) 
$g$ is bounded above to begin with. 
Assume that upper semicontinuity has been
proved for the bounded truncation $g_c=g\vee c$.  Then
\[\varlimsup_{\zeta'\to\zeta}\pres_\ell(g,\zeta')\le
\varlimsup_{\zeta'\to\zeta}\pres_\ell(g_c,\zeta')
\le\pres_\ell(g_c,\zeta).\]
In cases (b) and (c) the unique face $\Uset_0$
that contains $\zeta$ in its relative interior does not contain $0$,
and we can apply \eqref{animal3} to show that $\pres_\ell(g_c,\zeta)$ decreases to $\pres_\ell(g,\zeta)$ which proves upper semicontinuity for $g$.
We can now assume $g$ is bounded, and by subtracting a constant we can assume $g\le0$.

We only prove upper semicontinuity away from the extreme points of $\Uset$. The argument for the extreme points of $\Uset$ is an easier version of the proof.
Assume thus that  the point $\zeta$ on the boundary of $\Uset$  is not an extreme point. Let $\Uset_0$ be the unique face of $\Uset$ such that $\zeta\in\ri\Uset_0$.
Let $\range_0=\range\cap\Uset_0$.  Then  $\Uset_0=\conv\range_0$ and any 
convex representation $\zeta=\sum_{z\in\range}\beta_zz$ of $\zeta$ can only
use $z\in\range_0$ \cite[Theorems~18.1 and 18.3]{Roc-70}.  


The theorem   follows if we show that for any fixed $\delta>0$ and $\xi\in\Q^d\cap\Uset$ close enough to $\zeta$ and for $k\in\N$ such that $k\xi\in\Z^d$,  
\begin{align}\label{continuity-goal}
\lim_{m\to\infty}\P\Big\{\sum_{x_{0,mk+\ell}\in\Pi_{mk,mk\xi}}\!\!\!\!\!\!\! e^{mkR_{mk}^{\oneell}(g)}\ge e^{mk(\pres_\ell(g,\zeta)+\log|\range|)+6mk\delta}\Big\}=0.
\end{align}
Here we used the  approximation by rational points \eqref{pres-5}.
 $\Pi_{mk,mk\xi}$ is the set of admissible paths $x_{0,mk+\ell}$
such that $x_0=0$ and $x_{mk}=mk\xi$.  
It is enough to approach $\zeta$ 
from outside  $\Uset_0$ because continuity on $\ri\Uset_0$ is guaranteed by 
concavity.
Fix $\delta>0$.
 
Since $0\notin\Uset_0$ we can find a vector  $\uhat\in\Z^d$ such that 
$z\cdot\uhat>0$ for  $z\in\range_0$. 
 
\begin{figure}[h]
\begin{center}
\includegraphics[width=0.7\textwidth]{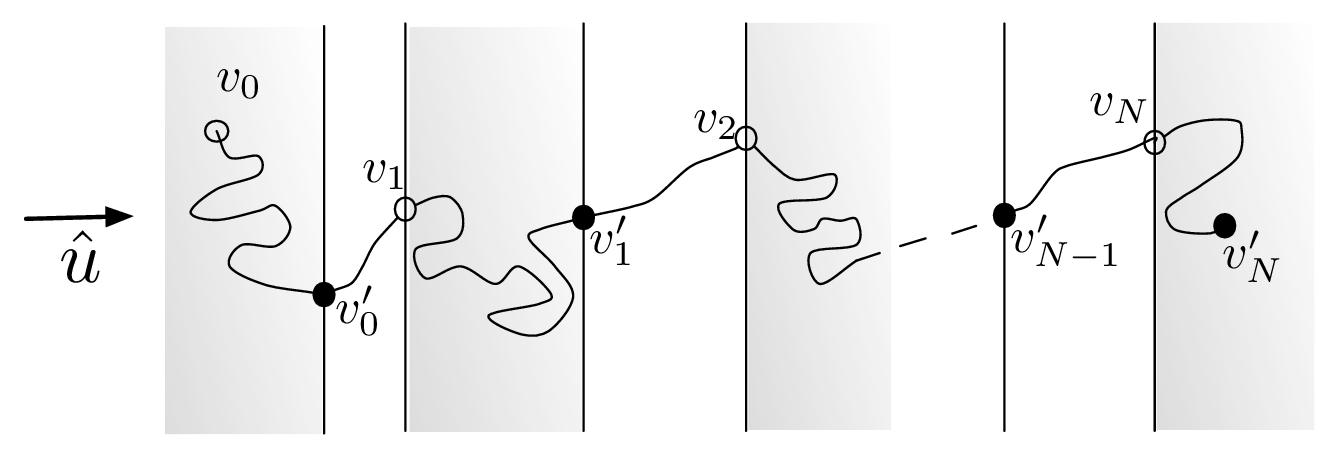}
\end{center}
\caption{Path segments in shaded regions are bad, the other segments are good. $v_i=X_{s_i}$ and $v_i'=X_{s_i'}$. Steps going up and to the right 
represent steps in $\range_0$.}\label{fig1}
\end{figure}
 
Given a path $x_{0,mk+\ell}$  let $s_0=0$ and, if it exists, let $s'_0\ge 0$ be its first {\sl regeneration time}: this is the first time $i\in[0,mk]$ such that 
$x_j\cdot\uhat\le x_i\cdot\uhat$ for $j\le i$, $z_{i+1,i+\ell}\in\range_0^\ell$, and $x_j\cdot\uhat>x_{i+\ell}\cdot\uhat$ for $j\in\{i+\ell+1,\dotsc, mk+\ell\}$.
If $s'_0$ does not exist then we set $s'_0=mk+\ell$ and stop at that. Otherwise, if $s'_0$ exists, then let 
\begin{align*}
s_1=\min\{j\in&(s'_0,mk+\ell):z_{j+1}\not\in\range_0\\
&\text{ or }\exists i\in(j+1,mk+\ell]\text{ such that }x_i\cdot\uhat\le x_{j+1}\cdot\uhat\}.
\end{align*}
If such a time does not exist, then we set $s_1=s'_1=mk+\ell$ and stop. Otherwise,
define $s_1<s'_1<s_2<s'_2<\dotsm$ inductively. Path segments  $x_{s'_i, s_{i+1}}$ are
 {\sl good}  and segments 
 $x_{s_i,s_i'}$ are {\sl bad} (the paths in the gray blocks in Figure \ref{fig1}). 
Good segments  have length at least $\ell$ and consist of only $\range_0$-steps, 
 and distinct  good segments lie in disjoint  slabs (a {\sl slab} is a portion of $\Z^d$ 
 between two hyperplanes perpendicular to $\uhat$).  
  
Time $mk+\ell$ may belong to an incomplete bad segment and then in the above procedure
the last time defined was $s_N<mk+\ell$ for some $N\ge0$ and we set $s'_{N}=mk+\ell$, 
or to a good segment in which case 
the last time defined was $s'_{N-1}\le mk$ for some $N\ge1$ and we set $s_{N}=s'_{N}=mk+\ell$. 
There are $N$    good segments   and $N+1$  bad segments, when we 
admit  possibly  degenerate first and last bad segments $x_{s_0,s'_0}$
 and $x_{s_N,s'_N}$  
(a degenerate segment has no steps). 
  Except possibly for   $x_{s_0,s'_0}$
 and $x_{s_N,s'_N}$, each  bad segment has at least 
 one $(\range\smallsetminus\range_0)$-step.


\begin{lemma} Given $\e>0$, we can choose $\e_0\in(0,\e)$ such that  if 
  $|\xi-\zeta|<\e_0$, then the total number of steps in the bad segments in any path in $\Pi_{mk,mk\xi}$ is at most
$C\e mk$ for a constant $C$.   In particular,  $N\le C\e mk$.  
\label{lm-cont1}\end{lemma}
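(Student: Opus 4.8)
The plan is to show that any path in $\Pi_{mk,mk\xi}$ that reaches the point $mk\xi$ lying near $\zeta\in\ri\Uset_0$ cannot afford to spend many steps outside $\range_0$, and then to bound the total length of the bad segments by a constant multiple of the number of $(\range\smallsetminus\range_0)$-steps. First I would fix the defining vector $\uhat\in\Z^d$ with $z\cdot\uhat>0$ for all $z\in\range_0$, and set $a=\min_{z\in\range_0}z\cdot\uhat>0$ and $b=\max_{z\in\range}\abs{z\cdot\uhat}$. Write a path of length $mk$ as a multiset of steps with $p$ steps from $\range_0$ and $q=mk-p$ steps from $\range\smallsetminus\range_0$. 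Taking the inner product of the endpoint identity $mk\xi=\sum_i z_i$ with $\uhat$ gives $mk(\xi\cdot\uhat)=\sum_{z\in\range_0}(\cdots)+\sum_{z\notin\range_0}(\cdots)$, and since $\zeta\cdot\uhat$ attains its value on the face $\Uset_0$ (so $\zeta\cdot\uhat=\max_{z\in\range}z\cdot\uhat$ restricted appropriately; more to the point, $w\cdot\uhat<\zeta\cdot\uhat$ for any $w$ in a convex combination using steps outside $\range_0$), a point $\xi$ with $\abs{\xi-\zeta}<\e_0$ can have at most $C\e_0 mk$ of its mass "pulled back" in the $\uhat$-direction. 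Quantitatively, each $\range_0$-step contributes at least $a$ to $\uhat$-displacement and the whole sum must equal $mk(\xi\cdot\uhat)\ge mk(\zeta\cdot\uhat)-mk\abs{\uhat}\e_0$; comparing with the maximal possible $\uhat$-displacement $mk\max_z z\cdot\uhat$ and using that every $(\range\smallsetminus\range_0)$-step loses a definite amount $\kappa>0$ in the $\uhat$-direction relative to $\zeta\cdot\uhat$, one gets $\kappa q\le C\abs{\uhat}\,\e_0 mk$, hence $q\le C\e mk$ after choosing $\e_0$ small enough in terms of $\e,\kappa,\uhat$.

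Next I would translate the bound on $q$ into a bound on the total length of bad segments. By construction each bad segment $x_{s_i,s_i'}$ with $0<i<N$ contains at least one $(\range\smallsetminus\range_0)$-step, so $N\le q+2$ (the $+2$ for the possibly degenerate first and last bad segments). For the lengths: a bad segment ends at a regeneration-type time, and between consecutive $(\range\smallsetminus\range_0)$-steps inside a bad segment the path can only wander a bounded amount in the $\uhat$-direction before the regeneration condition forces its termination — more carefully, a maximal run of $\range_0$-steps inside a bad segment strictly increases $x\cdot\uhat$ by at least $a$ per step, and such a run must be bounded because otherwise the path would overshoot past the eventual value and create a genuine regeneration time, contradicting membership in a bad segment. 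Combining, each bad segment has length at most $C$ times (its number of non-$\range_0$ steps $+\,1$), and summing over the at most $q+2$ bad segments gives total bad length $\le C(q+N)\le C'q\le C''\e mk$. In particular $N\le C\e mk$.

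The main obstacle I expect is the second step — rigorously bounding the \emph{length} of each bad segment, not just its count. The regeneration structure is delicate: a bad segment is a portion where the $\uhat$-coordinate is not yet "locked in," and one must argue that the walk cannot linger there indefinitely using only $\range_0$-steps. The cleanest route is probably to observe that whenever a bad segment contains a long run of $\range_0$-steps, the $\uhat$-coordinate at the end of that run exceeds the $\uhat$-coordinate at the start of the run by a large amount; if this excess is larger than $b\cdot(\text{number of remaining non-}\range_0\text{ steps in the path})$ then no future point can return below it, which would have made the run's starting point (plus $\ell$) a regeneration time — contradiction. Since the number of remaining non-$\range_0$ steps is $\le q\le C\e mk$, each such run has length $\le (b/a)q+O(\ell)$, and since a bad segment is an alternation of such runs with individual non-$\range_0$ steps, its length is controlled. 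Making the bookkeeping of "remaining non-$\range_0$ steps" versus "run length" precise, uniformly over all paths and all $m$, is where the care is needed; everything else is linear algebra with $\uhat$.
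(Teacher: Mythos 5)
There are two genuine gaps here, one per part of your argument.

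\textbf{Part 1 (bounding the number of $(\range\smallsetminus\range_0)$-steps).} You write $\kappa q\le C\abs{\uhat}\e_0mk$ using ``every $(\range\smallsetminus\range_0)$-step loses a definite amount $\kappa>0$ in the $\uhat$-direction relative to $\zeta\cdot\uhat$.'' But the $\uhat$ you fixed satisfies only $z\cdot\uhat>0$ for $z\in\range_0$; nothing stops a step $z\in\range\smallsetminus\range_0$ from having $z\cdot\uhat\ge\zeta\cdot\uhat$, in which case such a step gains, not loses. (Example: $\range=\{(1,0),(0,1),(2,1)\}$, $\uhat=(1,1)$, $\range_0=\{(1,0),(0,1)\}$: then $(2,1)\cdot\uhat=3>1=\zeta\cdot\uhat$.) Your calculation implicitly needs a functional that \emph{exposes} the face $\Uset_0$, i.e.\ $v\cdot z=c$ on $\range_0$ and $v\cdot z<c$ on $\range\smallsetminus\range_0$ — such a $v$ exists because $\Uset$ is a polytope, but it is a different vector from $\uhat$ and must be introduced separately. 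The paper avoids this entirely: it shows by a compactness argument that if $\abs{\xi-\zeta}<\e_0$ then \emph{every} convex representation $\xi=\sum\alpha_zz$ has $\sum_{z\notin\range_0}\alpha_z\le\e$ (otherwise a subsequential limit would give $\zeta$ a convex representation with mass outside $\range_0$, impossible since $\zeta$ is in the face). That is cleaner and uses no functional at all.

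\textbf{Part 2 (bounding bad-segment lengths).} Your run-length bound ``each run has length $\le(b/a)q+O(\ell)$'' is both not quite correct and, even if granted, too weak. It is not quite correct because a regeneration time $i$ also requires $x_j\cdot\uhat\le x_i\cdot\uhat$ for all $j\le i$, a running-max condition; a run starting from a level that has already been exceeded earlier in the bad segment is disqualified from starting a regeneration regardless of how far it goes. More seriously, even granting the per-run bound, the accounting does not close: a bad segment with $r$ non-$\range_0$ steps can have up to $r+1$ runs, each of length up to $(b/a)q+O(\ell)$, so the segment length is $\le(r+1)((b/a)q+O(\ell))$; summing over segments with $\sum r=q$ gives a total of order $q^2$, not $q$. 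You would then conclude total bad length $\le C\e^2(mk)^2$, which is useless. What the lemma needs is a \emph{per-segment} bound of the form (number of $\range_0$-steps) $\le C\cdot$(number of $(\range\smallsetminus\range_0)$-steps), which sums linearly. The paper gets this by decomposing each bad segment into alternating pieces via stopping times $\gamma_0\le\alpha_1\le\beta_1\le\gamma_1\le\dots$: in $[\gamma_{j-1},\alpha_j]$ there are no future backtracks, so no $\ell$ consecutive $\range_0$-steps can occur (else a regeneration starts), bounding $\range_0$-steps by $\ell\cdot$(non-$\range_0$-steps)$+\ell$; and in $[\alpha_j,\gamma_j]$, where the path drops below its start and then recovers to the previous maximum, every negative $\uhat$-unit is matched by at most two positive $\uhat$-units, again giving a linear dominance. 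That matching/telescoping structure is the missing idea.
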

\begin{proof}
Given $\e>0$ we can find $\e_0>0$ such that if $|\xi-\zeta|<\e_0$,  then   any convex representation 
$\xi=\sum_{z\in\range}\alpha_z z$ of $\xi$ satisfies 
  $\sum_{z\not\in\range_0}\alpha_z\le \e$.   (Otherwise we can let $\xi\to\zeta$
  and in the limit $\zeta$ would possess a convex representation with 
positive weight on $\range\smallsetminus\range_0$.) 
Consequently, if $x_{0,mk+\ell}\in\Pi_{mk,mk\xi}$ and $|\xi-\zeta|<\e_0$ the number of $(\range\smallsetminus\range_0)$-steps in $x_{0,mk+\ell}$ is bounded by $\e mk+\ell$.

Hence  it is enough to show that in each bad segment,
the number of $\range_0$-steps is at most a constant multiple of 
$(\range\smallsetminus\range_0)$-steps.  So consider a bad segment 
 $x_{s_i,s_i'}$.  If $s'_i=mk+\ell$ it can happen that 
$ 
 x_{s'_i}\cdot\uhat<\max_{s_i\le j\le s_i'}x_j\cdot\uhat.
$ 
In this case  we   add more steps  from $\range_0$ and increase  $s'_i$ so that 
 \begin{align}\label{complete}
 x_{s'_i}\cdot\uhat=\max_{s_i\le j\le s_i'}x_j\cdot\uhat.
  \end{align}
 This only makes things worse by increasing the number of $\range_0$-steps.  
 We proceed now by assuming \eqref{complete}. 
  
\begin{figure}[h]
\begin{center}
\includegraphics[height=5cm]{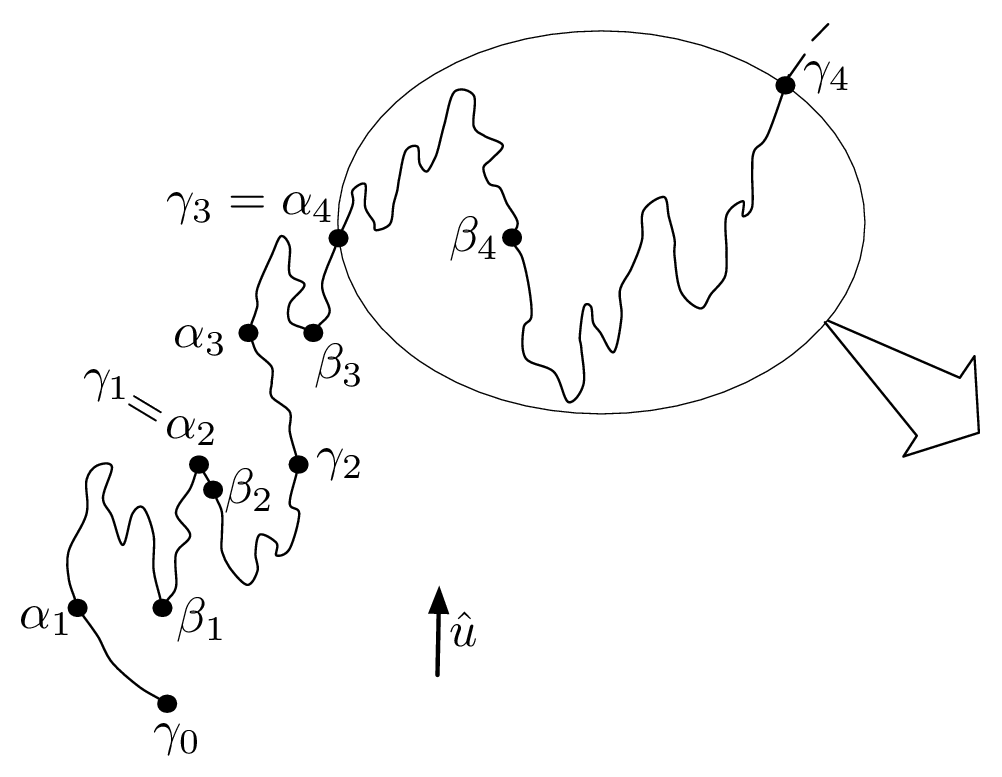}\ 
\includegraphics[height=3cm]{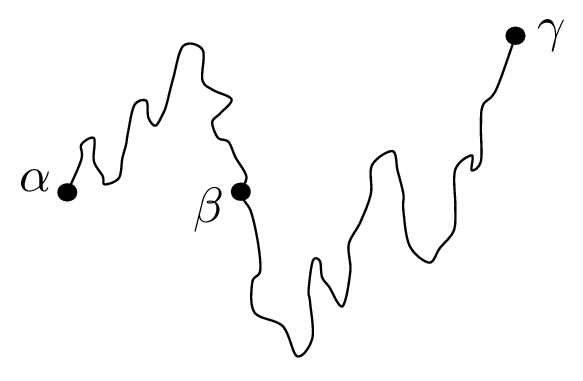}
\end{center}
\caption{Illustration of the   stopping times $\alpha_i$, $\beta_i$, and $\gamma_i$. Note how the immediate backtracking at $\gamma_1$ makes $\alpha_2=\gamma_1$ and $\beta_2=\alpha_2+1$.}\label{fig2}
\end{figure}

  Start with $\gamma_0=s_i$. Let 
\[ \alpha_1=\,  s_i'\,\wedge\, \inf\{ n\ge\gamma_0:   \exists j>n\text{ such that }x_j\cdot\uhat\le x_n\cdot\uhat\}. \]
We first control the number of $\range_0$-steps in the segment $z_{\gamma_0+1,\alpha_1}$.
The segment  $z_{\gamma_0+1,\alpha_1-1}$ cannot  
contain more than $\ell-1$  
$\range_0$-steps in a row because any $\ell$-string of $\range_0$-steps
 would have begun the next
good segment. Thus, the number of $\range_0$-steps in $z_{\gamma_0+1,\alpha_1}$ is bounded by 
 $(\ell-1)$ $\times$ (the number of $(\range\smallsetminus\range_0)$-steps) $+$ $\ell$. 
 Suppose 
  $\alpha_1=s_i'$, in other words, we already exhausted the entire bad segment.  Since a bad segment contains   at least one $(\range\smallsetminus\range_0)$-step 
  we are done:
the number of $\range_0$-steps is bounded by $2\ell$ times the number of $(\range\smallsetminus\range_0)$-steps.  
So let us  suppose $\alpha_1<s_i'$ and continue with the segment $x_{\alpha_1,s_i'}$.

Let 
\[ \beta_1=\inf\{ n>\alpha_1:  x_n\cdot\uhat\le x_{\alpha_1}\cdot\uhat\}\le s_i'  \]
be the time of the first backtrack after $\alpha_1$ and 
\[ \gamma_1=\inf\{ n>\beta_1:  x_n\cdot\uhat\ge  \max_{\alpha_1\le j\le \beta_1}
x_j\cdot\uhat\}  \]
  the time when the path gets at or above the previous maximum.  Due to \eqref{complete}, $\gamma_1\le s_i'$. 
 
 We claim that   
 in the segment $x_{\alpha_1,\gamma_1}$ the number of positive steps 
(in the $\uhat$-direction) is at most a constant times the number of nonpositive steps.
Since $\range_0$-steps are   positive steps while all nonpositive
steps are $(\range\smallsetminus\range_0)$-steps, this claim gives
the dominance
(number of $\range_0$-steps) $\le$    $C$ $\times$ (number of $(\range\smallsetminus\range_0)$-steps). 

 The claim is proved by counting.   
 Project all steps $z$ onto the $\uhat$ direction by considering 
 $z\cdot\uhat$, so that we can think of  
  a path on the 1 dimensional lattice.  Then, instead of the original steps that come in
  various sizes,  
count increments of  $\pm 1$.   Up to constant multiples, counting  unit increments is the same as counting  steps.  
 By the definition of the stopping times, at time $\beta_1$ the segment $x_{\alpha_1,\gamma_1}$  
visits a point at or below its starting level, 
but ends up at a new maximum level at time $\gamma_1$.  Ignore 
the part of the last step $z_{\gamma_1}$ that takes the path above the previous
maximum  $\max_{\alpha_1\le j\le \beta_1}
x_j\cdot\uhat$.    
Then each negative   unit increment in the $\uhat$-direction is matched by at most two positive unit increments.  (Project the right-hand picture in Figure \ref{fig2} onto
the vertical $\uhat$ direction.)  

Since the segment $x_{\alpha_1,\gamma_1}$ must have at least one $(\range\smallsetminus\range_0)$-step, we have shown that 
the number of $\range_0$-steps in the segment $x_{\gamma_0,\gamma_1}$ is bounded above by $2(C\vee\ell)$
$\times$  (number of $(\range\smallsetminus\range_0)$-steps).
Now repeat the previous argument, beginning at $\gamma_1$. Eventually
the bad segment  $x_{s_i,s_i'}$ is exhausted. 
\qed \end{proof}

Let $\bfv$ denote the collection of times $0=s_0\le s'_0<s_1<s'_1<s_2<s'_2<\dotsc<s_{N-1}<s'_{N-1}<s_{N}\le s'_{N}=mk+\ell$, 
positions $v_i=x_{s_i}$,  $v'_i=x_{s'_i}$, and the steps in bad path segments 
$u^{(i)}_{s_i,s'_i}=z_{s_i+1,s'_i}$.  $s_0=s'_0$ means $u^{(0)}$ is empty.

We use the following simple fact below.  Using Stirling's formula one can find a function $h(\e)\searrow 0$ 
such that, for all $\e>0$ and  $n\ge \e^{-1}$,  $\binom{n}{n\e}\le e^{nh(\e)}$. 

\begin{lemma}\label{lm:v-count}
With $\e>0$ fixed in Lemma \ref{lm-cont1}, and with $m$ large enough,
the number of vectors $\bfv$ is at most $C(mk)^{c_1}e^{mk h(\e)}$,
where the function $h$ satisfies   $h(\e)\to0$ as $\e\to0$.   
 \end{lemma}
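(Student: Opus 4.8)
The plan is to count the data $\bfv$ by counting separately (i) the number of choices for the times $\{s_i, s_i'\}$, (ii) the number of choices for the bad-segment steps $u^{(i)}_{s_i,s_i'}$, and then argue that the positions $v_i = x_{s_i}$ and $v_i' = x_{s_i'}$ are determined (up to a polynomial factor) by the rest of the data. The key input is Lemma \ref{lm-cont1}: once $|\xi-\zeta|<\e_0$, the total number of steps appearing in all the bad segments is at most $C\e mk$, and the number $N$ of good segments is at most $C\e mk$.

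First I would count the bad-segment steps. Their total number is $T \le C\e mk$, and each such step is one of $|\range|$ choices, so the number of choices for the concatenated string of bad steps is at most $|\range|^{C\e mk} = e^{mk\cdot C\e\log|\range|}$, which is of the form $e^{mk\,h_1(\e)}$ with $h_1(\e)\to 0$ as $\e\to 0$. Next I would count the times: we must choose the $2N$ cut-points $s_0'<s_1<s_1'<\dots<s_{N-1}'<s_N$ inside $\{0,1,\dots,mk+\ell\}$, with $N \le C\e mk$. The number of ways to choose at most $2C\e mk$ ordered points out of $mk+\ell+1$ is at most $\binom{mk+\ell+1}{2C\e mk}$ (summed over the at-most-$C\e mk$ possible values of $N$, which contributes only a polynomial factor $C(mk)^{c_1}$), and by the stated Stirling bound $\binom{n}{n\e'}\le e^{nh(\e')}$ this is at most $e^{mk\,h_2(\e)}$ with $h_2(\e)\to0$ as $\e\to0$. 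Finally, the good segments carry no extra combinatorial data to record: between $s_i'$ and $s_{i+1}$ the path is an arbitrary $\range_0$-path, whose steps are \emph{not} part of $\bfv$. The positions are then pinned down: $v_0 = x_0 = 0$, and given the bad steps in segment $i$ together with $v_i$ we get $v_i'$ exactly; but $v_{i+1}$ is reached from $v_i'$ by an \emph{unrecorded} good $\range_0$-segment, so $v_{i+1}$ is not literally determined. However, $v_{i+1}-v_i'$ lies in $D_{s_{i+1}-s_i'}$, a set of polynomial size $\le (s_{i+1}-s_i'+1)^d \le (mk+\ell+1)^d$; multiplying these over the $N \le C\e mk$ good segments would be too expensive, so instead I would note that the only thing we actually need $\bfv$ to encode is enough information to later reconstruct the partition-function contribution, and the positions $v_i,v_i'$ are part of $\bfv$ by definition — hence I must absorb them. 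The clean way: observe $v_i$ is determined by $v_{i-1}'$ plus the good-segment displacement, and record the good-segment displacement as one of at most $|D_{mk}| \le (2mk)^d$ values; over $N\le C\e mk$ segments this gives $((2mk)^d)^{C\e mk} = e^{mk\cdot C\e d\log(2mk)}$ — still not $o(1)$ because of the $\log(mk)$.

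To fix this last point, the right observation is that the good segments partition a slab decomposition in the $\uhat$-direction (distinct good segments lie in disjoint slabs, as noted after Figure \ref{fig1}), so the $\uhat$-heights are ordered and the sum of their $\uhat$-extents is at most $M mk$; thus specifying, for each good segment, its net displacement amounts to a composition-type count that is again absorbed into $\binom{\cdot}{\cdot}$-type bounds and the polynomial prefactor. Concretely I would bound the number of choices of $(v_i,v_i')_{i}$ given the times and bad steps by $(mk)^{c_1}\cdot e^{mk h_3(\e)}$ using that consecutive positions differ by a bounded-norm-per-step path of controlled length. Collecting, the number of $\bfv$ is at most $C(mk)^{c_1} e^{mk(h_1+h_2+h_3)(\e)}$, and setting $h(\e)=h_1(\e)+h_2(\e)+h_3(\e)$, which tends to $0$ as $\e\to0$, completes the proof. \qed

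The main obstacle, as the discussion above signals, is handling the positions $v_i,v_i'$ without incurring a $(\text{polynomial})^{N}$ blow-up: one must exploit the slab/disjointness structure of the good segments and the bounded step size so that the positional data is effectively a composition of $mk$ into $\le C\e mk$ parts rather than $N$ independent choices from a polynomially large set. Everything else is bookkeeping with the Stirling estimate $\binom{n}{n\e}\le e^{nh(\e)}$ already supplied in the text.
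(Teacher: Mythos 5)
Your overall scaffolding matches the paper's: count the cut times, count the bad-segment steps (both giving $e^{mk\,h(\e)}$ via the Stirling bound with polynomial prefactors), and then handle the good-segment displacements. You also correctly diagnose the central difficulty — a naive choice of each $v_{i}-v'_{i-1}$ from a polynomially large set yields a $(\text{poly})^{N}$ factor that is not $e^{o(mk)}$ — which is exactly where the lemma needs an idea. However, you do not close this gap; you only gesture at it.

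Your proposed fix (``the $\uhat$-heights are ordered and the sum of their $\uhat$-extents is at most $Mmk$; thus specifying each net displacement amounts to a composition-type count'') does not suffice as stated, because the $\uhat$-extent of a good segment is a single integer and does not determine the $d$-dimensional displacement $v_i - v'_{i-1}$. A composition count on $\uhat$-projections alone leaves unaccounted the transverse components, and naively multiplying by the number of transverse possibilities per segment brings back the $(\text{poly})^N$ problem. The paper's actual argument is finer: for each $\range_0$-reachable $y$ it fixes once and for all a representation $a(y)=(a_z(y))_{z\in\range_0}\in\Z_+^{\range_0}$ with $\sum_z a_z(y)z=y$, so that the map $(v_i-v'_{i-1})_{i\le N}\mapsto (a(v_i-v'_{i-1}))_{i\le N}$ is injective. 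Then (a) the total endpoint $\eta=\sum_i(v_i-v'_{i-1})$ ranges over only $C(\e mk)^d$ values, (b) any such $\eta$ has at most $C(mk)^{\abs{\range_0}}$ representations $(b_z)\in\Z_+^{\range_0}$ (here the $\uhat$-projection bound enters, to give $b_z\le Cmk$), and (c) each such $b$ has at most $\prod_z\binom{b_z+N}{N}\le e^{mkh(\e)}$ decompositions as $\sum_{i=1}^N a^{(i)}$ in $\Z_+^{\range_0}$. It is this composition count in $\Z_+^{\range_0}$ — not in $\Z^d$, and not of $\uhat$-projections — combined with the injectivity of the fixed-representation map that dominates the displacement sequences by $C(mk)^{c_1}e^{mkh(\e)}$. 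Without that device your ``$h_3(\e)$'' remains unjustified, so the proof as written has a genuine hole precisely at the step the lemma exists to solve.
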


\begin{proof}
Recall  $N\le C\e mk$ for a constant $C$ coming from Lemma \ref{lm-cont1}. 
We take $\e>0$ small enough so that $C\e<1/2$. 
 A vector $\bfv$ is determined by the following choices. 
 
 \medskip

(i)   The  times  $\{s_i, s'_i\}_{0\le i\le N}$   can be chosen in at most 
	\[\sum_{N=1}^{C\e mk}\binom{mk}{2N}\le Cmk\binom{mk}{C\e mk}\le C mk e^{mk h(\e)}\qquad\text{ways.}\]


(ii) The steps in the bad segments, in a total of at most
  $\abs{\range}^{C\e mk}\le e^{mk h(\e)}$ ways.

 \medskip

(iii)  The  path increments $\{v_{i}-v'_{i-1}\}_{1\le i\le N}$ across the good segments.
Their    number is also bounded by $C(mk)^{c_1}e^{mk h(\e)}$.  
 
\smallskip
 
The argument for (iii) is as follows. For each   finite $\range_0$-increment 
$y\in\{  z_1+\dotsm+z_k:  k\in\N, \, z_1,\dotsc, z_k\in \range_0\} $,   fix a particular 
representation $y=\sum_{z\in\range_0} a_z(y)z$, identified 
by the vector $a(y)= (a_z(y))\in\Z_+^{\range_0}$.  
The number of possible endpoints 
$\eta=\sum_{i=1}^N(v_{i}-v'_{i-1})$ is at most $C(\e mk)^d$ 
because $|mk\xi-mk\zeta|<mk\e$ and the total number of steps in all bad segments is at most $C\e mk$.
Each possible endpoint $\eta$ 
has at most $C(mk)^{\abs{\range_0}}$ representations  $\eta=\sum_{z\in\range_0} b_zz$
with $(b_z)\in\Z_+^{\range_0}$  because  projecting to $\uhat$  shows 
that each $b_z$ is bounded by $Cmk$.  Thus there are at most $C(mk)^{c_1}$  
vectors $(b_z)\in\Z_+^{\range_0}$ that can represent possible endpoints of the sequence
of increments.   Each such vector $b=(b_z)$ can be decomposed into a sum of 
increments   $b=\sum_{i=1}^{N} a^{(i)}$ in  at most 
\[ \prod_{z\in\range_0}\binom{b_z+N}{N} \le
{\binom{Cmk+C\e mk}{C\e mk}}^{\abs{\range_0}}
 \le e^{mk h(\e)} \] ways.   (Note that $\binom{a+b}{b}$ is increasing in both $a$ and $b$.)
So all in all  there are $C(mk)^{c_1} e^{mk h(\e)} $ possible sequences $\{a^{(i)}\}_{1\le i\le N}$
of increments in the space $\Z_+^{\range_0}$ that satisfy 
\[ \sum_{z\in\range_0} \sum_{i=1}^{N} a_z^{(i)}z =\eta  \qquad 
\text{for a possible endpoint $\eta$.}   \]

Map $\{v_{i}-v'_{i-1}\}_{1\le i\le N}$
  to  $\{a(v_{i}-v'_{i-1})\}_{1\le i\le N}$.   This mapping is 1-1.  The image is one
 of the previously counted sequences $\{a^{(i)}\}_{1\le i\le N}$ because
 \[  \sum_{z\in\range_0} \sum_{i=1}^{N} a_z(v_{i}-v'_{i-1}) z= 
 \sum_{i=1}^{N}  \sum_{z\in\range_0}a_z(v_{i}-v'_{i-1}) z=  
  \sum_{i=1}^{N} (v_{i}-v'_{i-1}) = \eta.  \]
We conclude that there are at most   $C(mk)^{c_1} e^{mk h(\e)} $  
sequences $\{v_{i}-v'_{i-1}\}_{1\le i\le N}$ of increments across the good segments.  Point (iii) has been verified. 
  
Multiplying counts (i)--(iii) proves the lemma.\qed
\end{proof}

Let $\Pi_{mk,mk\xi}^{\bfv}$ denote the paths in $\Pi_{mk,mk\xi}$ that are compatible
with $\bfv$, that is, paths 
that go through space-time points $(x_{s_i}, s_i)$, $(x_{s'_i}, s'_i)$ and take the
specified steps in the bad segments.  
The remaining unspecified   good segments   connect
$(x_{s'_{i-1}}, s'_{i-1})$ to $(x_{s_i}, s_i)$
 with   $\range_0$-steps, for $1\le i\le N$.  
 
Fix $\e>0$ small enough so that for large 
  $m$,    $C(mk)^{c_1} e^{mkh(\e)}\le e^{mk\delta}$.
Then our goal    \eqref{continuity-goal}   follows if we show 
\begin{align}\label{continuity-goal2}
\lim_{m\to\infty}\sum_{\bfv}\P\Big\{\sum_{x_{0,mk}\in\Pi_{mk,mk\xi}^\bfv}\!\!\!\! e^{mkR_{mk}^{\oneell}(g)}\ge e^{mk(\pres_\ell(g,\zeta)+\log|\range|)+5mk\delta}\Big\}=0.
\end{align}

\begin{figure}[h]
\begin{center}
\includegraphics[height=2.5cm]{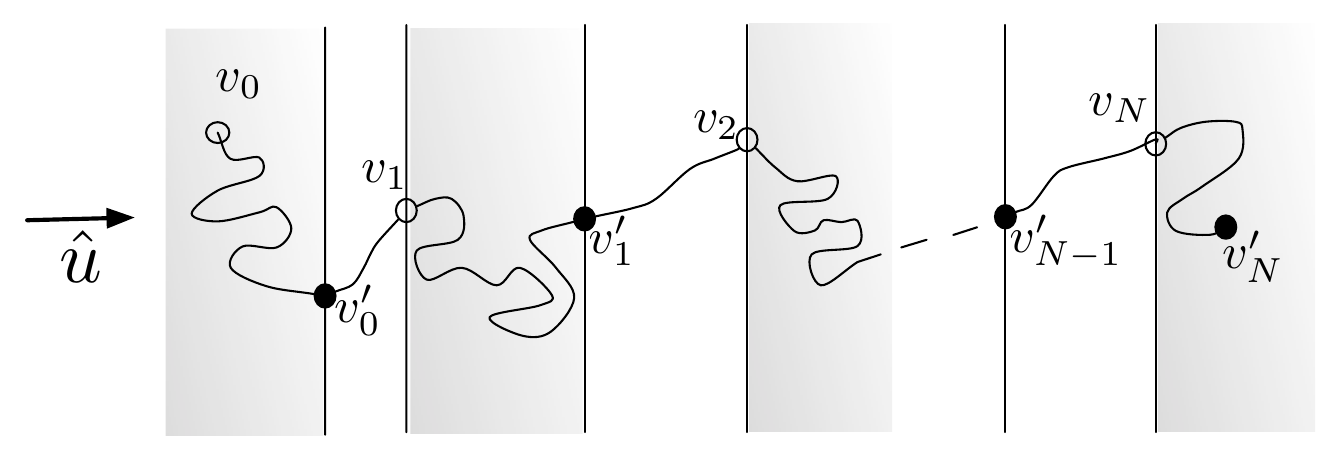}\ 
\includegraphics[height=2.5cm]{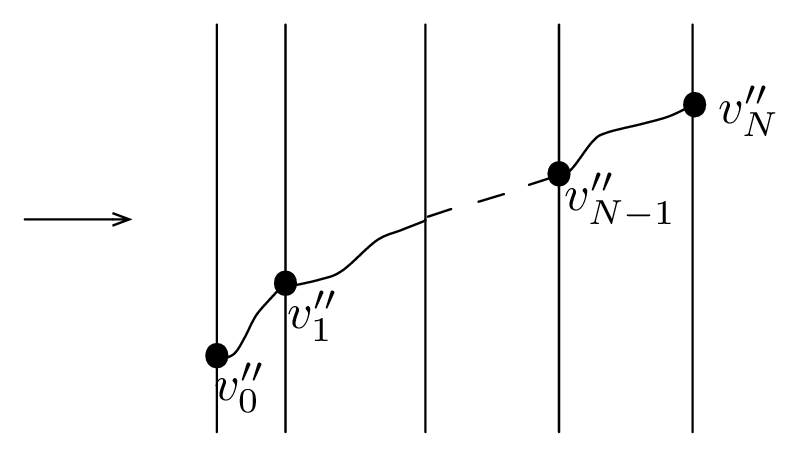}
\end{center}
\caption{Illustration of the construction. The shaded bad slabs of environments 
are deleted.  The white good slabs are joined together and shifted so that the good
path segments connect.  So for example points $v_1$ and $v'_1$ on the left
are identified as $v''_1$ on the right.  }\label{fig3}
\end{figure}

Given a vector $\bfv$ and an environment $\w$ define a new environment $\w^\bfv$
by deleting   the bad slabs and shifting the good slabs so that the good path
increments $\{v_{i}-v'_{i-1}\}_{1\le i\le N}$ become connected.  Here is a precise construction. 
First  for $x\cdot\uhat<0$ and  $x\cdot\uhat\ge\sum_{j=0}^{N-1}(v_{j+1}-v'_j)\cdot\uhat$ sample
 $\w^\bfv_x$  fresh (this part of space is irrelevant). 
  For a point $x$ in between  pick $i\ge 0$ such that 
\[\sum_{j=1}^{i}(v_{j}-v'_{j-1})\cdot\uhat\le x\cdot\uhat<\sum_{j=1}^{i+1}(v_{j}-v'_{j-1})\cdot\uhat\] 
and   put $y=\sum_{j=1}^{i}(v_j-v'_{j-1})$.  
Then set  $\w^\bfv_x=\w_{v'_i+x-y}$.  
	
For a fixed $\bfv$, each path $x_{0,mk+\ell}\in\Pi_{mk,mk\xi}^\bfv$ is mapped
in a 1-1 fashion to  a new path 
$x^{\bfv}_{0,\tau(\bfv)+\ell-1}$ as follows.  Set
\[\tau(\bfv)=\sum_{j=1}^{N}(s_{j}-s'_{j-1})-\ell.\] 
Given time point $t\in\{0,\dotsc, \tau(\bfv)+\ell-1\}$   pick  $i\ge0$ such that 
	\[\sum_{j=1}^{i}(s_{j}-s'_{j-1})\le t<\sum_{j=1}^{i+1}(s_{j}-s'_{j-1}).\]
Then with $s=\sum_{j=0}^{i}(s'_j-s_j)$ and  $u=\sum_{j=0}^{i}(v'_j-v_j)$
set   $x_t^{\bfv}=x_{t+s}-u$. 
This mapping of $\w$ and $x_{0,mk+\ell}$ moves the good slabs of environments
	together with the good path segments so that  $\w^\bfv_{x^{\bfv}_t}=\w_{x_{t+s}}$. 
	(See Figure \ref{fig3}.)  
 The sum of the good increments that appeared in Lemma \ref{lm:v-count} is now 
	\[x_{\tau(\bfv)+\ell}^{\bfv}=x_{s_{N}}-\sum_{j=0}^{N-1}(v'_j-v_j)=v_{N}-\sum_{j=0}^{N-1}(v'_j-v_j)=\sum_{j=1}^N(v_j-v'_{j-1}).\]
Define $\eta(\bfv)\in\Uset_0$ by
	\[x^\bfv_{\tau(\bfv)}=\tau(\bfv)\eta(\bfv).\]

Observe that $|\tau(\bfv)-mk|$ and $|x^{\bfv}_{\tau(\bfv)}-mk\xi|$ are
(essentially)  bounded by the total length of the bad segments and hence by $C\e mk$.
Moreover, due to total ergodicity $\pres_\ell(g,\cdot)$ is concave on $\Uset_0$ and hence continuous in its interior.
Thus, we can choose $\e>0$ small enough so that \[mk\pres_\ell(g,\zeta)+mk\delta>\tau(\bfv)\pres_\ell(g,\eta(\bfv)).\]
\eqref{continuity-goal2} would then follow if we show
\[\lim_{m\to\infty}\sum_{\bfv}\P\Big\{\sum_{x_{0,mk}\in\Pi_{mk,mk\xi}^\bfv} \!\!\!\!\!\!
e^{mkR_{mk}^{\oneell}(g)}\ge e^{\tau(\bfv)(\pres_\ell(g,\eta(\bfv))+\log|\range|)+3mk\delta}\Big\}=0.\]
This, in turn, follows from showing
\beq\begin{aligned}
\lim_{m\to\infty}\sum_{\bfv}\P\Big\{\sum_{x_{0,mk}\in\Pi_{mk,mk\xi}^\bfv} \!\!\!\!\!\!
&e^{\tau(\bfv)R_{\tau(\bfv)}^{\oneell}(g)(\w^{\bfv},x^{\bfv}_{0,\tau(\bfv)+\ell})}\\
&\quad\quad \ge \; e^{\tau(\bfv)(\pres_\ell(g,\eta(\bfv))+\log|\range|)+2mk\delta}\;\Big\}=0.
\end{aligned}\label{cont-17}
\eeq
To justify the step to \eqref{cont-17}, first   delete all terms from \[ mkR_{mk}^{\oneell}(g)=\sum_{i=0}^{mk-1}g(T_{x_i}\w, z_{i+1,i+\ell})\] that depend on $\w$ or $(z_i)$ outside of good slabs.  Since $g\le 0$ 
this  goes in the right direction.   The remaining terms can be written as
$\sum_i g(T_{x^{\bfv}_i}\w^{\bfv}, z^{\bfv}_{i+1,i+\ell})$ for a certain subset of indices
$i\in\{0,\dotsc, \tau(\bfv)-1\}$. 
Then  add in the terms for the remaining indices to capture the entire sum 
\[ \tau(\bfv)R_{\tau(\bfv)}^{\oneell}(g)(\w^{\bfv},x^{\bfv}_{0,\tau(\bfv)+\ell})
=\sum_{i=0}^{\tau(\bfv)-1} g(T_{x^{\bfv}_i}\w^{\bfv}, z^{\bfv}_{i+1,i+\ell}).\]  
The terms added correspond to  terms that originally straddled   good and bad segments.   
Hence since $g$ is local in its dependence on both $\w$ and $z_{1,\infty}$  there are at most $C\e mk$ such terms. 
Since $g$ is bounded, choosing $\e$ small enough allows us to absorb all such terms
into one $mk\delta$ error. 

Observing that $\w^\bfv$ has the same distribution as $\w$,    adding more paths
in the sum inside the probability,  and recalling that $|\tau(\bfv)-mk|\le Cmk\e$,
 we see that  it is enough to prove 
\[\lim_{m\to\infty}\sum_{\bfv}\P\Big\{\sum_{x_{0,\tau(\bfv)}\in\Pi_{\tau(\bfv),\tau(\bfv)\eta(\bfv)}} 
\!\!\!\!\!\!\!\!\!\!\!
e^{\tau(\bfv)R_{\tau(\bfv)}^{\oneell}(g)}\ge e^{\tau(\bfv)(\pres_\ell(g,\eta(\bfv))+\log|\range|)+\tau(\bfv)\delta}\Big\}=0.\]
By Lemma \ref{lm:v-count}, concentration inequality  Lemma \ref{lm:conc}, 
and $\tau(\bfv)\ge mk/2$,  the 
sum of  probabilities above  is bounded by 
$C(mk)^{c_1} e^{mkh(\e)-B\delta^2mk/2}\le C(mk)^{c_1} e^{-(\delta_1-h(\e))km}$
for another small positive constant $\delta_1$.  
Choosing $\e$ small enough shows  convergence to $0$ exponentially fast in $m$.  

We have verified the original goal \eqref{continuity-goal} and thereby 
completed the proof   of Theorem \ref{th:pt2pt cont}.\qed
\end{proof}
\end{theopargself}

\section{Quenched large deviations for the walk }\label{ld-sec}

Standing assumptions for this section are  Ê$\range\subset\Z^d$ is finite 
 and  $(\Omega,\kS,\P,\{T_z:z\in\gr\})$Ê
is a measurable ergodic dynamical system.   
The theorem below assumes
$\pres_\ell(g)$ finite; recall Remark \ref{finite-rem} for conditions that guarantee this.  
 We employ the following notation  for lower semicontinuous
regularization  of a function of several variables:
\[  F^{\text{lsc}(x)}(x,y)= \lim_{r\searrow 0} \inf_{z: \abs{z-x}<r}  F(z,y),  \]
and analogously for upper semicontinuous
regularization.

\begin{theorem}\label{th:ldp}
Let  $\ell\ge0$ and let  $g:\Omega\times\range^\ell\to\R$. 
Assume  $g\in\Ll$ and that $\pres_\ell(g)$ is finite. 
Then for $\P$-a.e.\ $\w$,  the distributions  $Q_n^{g,\w}\{X_n/n\in\cdot\}$ on $\R^d$ 
satisfy an LDP with deterministic rate function
 \beq  I^g(\zeta) =  \pres_\ell(g)-\pres^{\mathrm{usc}(\zeta)}_\ell(g,\zeta).  \label{I^g} \eeq
 This means that the following bounds  hold: 
 \begin{equation}
  \begin{aligned}
	&\varlimsup_{n\to\infty}n^{-1}\log  Q_{n}^{g,\w}\{X_n/n\in A\}
	\le-\inf_{\zeta\in A} I^g(\zeta)\ \text{ for  closed   }A\subset\R^d \\
\text{and}	\ 
 &\varliminf_{n\to\infty}n^{-1}\log Q_{n}^{g,\w}\{X_n/n\in O\}
	\ge-\inf_{\zeta\in O} I^g(\zeta) \ \text{ for   open   }O\subset\R^d.
\end{aligned}\label{qldp}
\end{equation} 
Rate function $I^g:\R^d\to[0,\infty]$  is convex, and  on $\Uset$   finite and  continuous.   
\end{theorem}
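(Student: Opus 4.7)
The strategy is to derive matching upper and lower bounds on $n^{-1}\log E\bigl[e^{nR_n^{\oneell}(g)}\one\{X_n/n\in\cdot\}\bigr]$ and then divide by $Z_n^{g,\w}$, using that $n^{-1}\log Z_n^{g,\w}\to\pres_\ell(g)$ almost surely by Theorem \ref{th:pt2pt}(b). Since $X_n\in D_n\subset n\Uset$, the walk $X_n/n$ always lies in the compact convex set $\Uset$, so any closed set may be intersected with $\Uset$ without changing the relevant expectations.

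For the upper bound on a closed set $A$, the plan is to apply Lemma \ref{lm-pressure2} with $K=A\cap\Uset$ to obtain, for each $\delta>0$,
\[
\varlimsup_n n^{-1}\log E\bigl[e^{nR_n^{\oneell}(g)}\one\{X_n/n\in K\}\bigr]\le\sup_{\xi\in\Q^d\cap K_\delta\cap\Uset}\pres_\ell(g,\xi),
\]
after identifying the inner limsup in the lemma with $\pres_\ell(g,\xi)$ via Theorem \ref{th:pt2pt-rats}. Choose $\delta_m\downarrow 0$ and $\xi_m\in\Q^d\cap K_{\delta_m}\cap\Uset$ attaining the sup to within $1/m$; by compactness of $K\cap\Uset$ extract $\xi_{m_j}\to\zeta\in K\cap\Uset$. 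By the very definition of the usc regularization,
\[
\lim_j\pres_\ell(g,\xi_{m_j})\le\pres_\ell^{\mathrm{usc}(\zeta)}(g,\zeta)\le\sup_{\zeta\in A}\pres_\ell^{\mathrm{usc}(\zeta)}(g,\zeta),
\]
which combined with the asymptotic for $Z_n^{g,\w}$ yields the closed-set bound in \eqref{qldp}.

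For the lower bound on an open set $O$, fix any $\zeta\in O\cap\Uset$. Since $\abs{\xhat_n(\zeta)-n\zeta}$ is bounded uniformly in $n$ by \eqref{xhat-def}, $\xhat_n(\zeta)/n\in O$ for all large $n$, so
\[
Q_n^{g,\w}\{X_n/n\in O\}\;\ge\;(Z_n^{g,\w})^{-1}E\bigl[e^{nR_n^{\oneell}(g)}\one\{X_n=\xhat_n(\zeta)\}\bigr],
\]
and Theorem \ref{th:pt2pt}(a) gives $\varliminf_n n^{-1}\log Q_n^{g,\w}\{X_n/n\in O\}\ge\pres_\ell(g,\zeta)-\pres_\ell(g)$. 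Taking the sup over $\zeta\in O\cap\Uset$, and noting that for each $\zeta\in O$ the ball $B(\zeta,\e)$ lies in $O$ for small $\e>0$ so that $\pres_\ell^{\mathrm{usc}(\zeta)}(g,\zeta)\le\sup_{\zeta'\in O\cap\Uset}\pres_\ell(g,\zeta')$, we conclude $\varliminf\ge-\inf_{\zeta\in O}I^g(\zeta)$.

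The structural properties of $I^g$ follow directly from Theorem \ref{th:regular}(b): on $\Uset$ the usc regularization $\pres_\ell^{\mathrm{usc}(\zeta)}(g,\cdot)$ coincides with the deterministic continuous extension of $\pres_\ell(g,\cdot)|_{\ri\Uset}$, which is concave and bounded. Hence $I^g$ is finite, convex, and continuous on $\Uset$; off $\Uset$ one has $I^g\equiv+\infty$, and the convexity of $\Uset$ propagates convexity of $I^g$ to all of $\R^d$. The main obstacle is the extraction step in the upper bound: passing from the sup of $\pres_\ell(g,\xi)$ over rational $\xi$ in shrinking neighborhoods of $K$ to the usc regularization $\pres_\ell^{\mathrm{usc}(\zeta)}(g,\zeta)$ at a suitable limit point $\zeta\in K$, which requires the compactness-and-subsequence argument above rather than a direct pointwise comparison, because $\pres_\ell(g,\cdot)$ need not be continuous on the relative boundary of $\Uset$.
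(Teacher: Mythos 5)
Your proof is correct and follows essentially the same route as the paper: Lemma \ref{lm-pressure2} (with the limsup inside identified as $\pres_\ell(g,\xi)$ via Theorem \ref{th:pt2pt-rats}) gives the closed-set bound, the direct comparison with $\one\{X_n=\xhat_n(\zeta)\}$ gives the open-set bound, and Theorem \ref{th:regular} supplies the structural facts about $I^g$. The only cosmetic difference is that you pass from $\sup_{\xi\in\Q^d\cap K_\delta\cap\Uset}\pres_\ell(g,\xi)$ to $\sup_{\zeta\in K}\pres_\ell^{\mathrm{usc}}(g,\zeta)$ by an explicit subsequence extraction, whereas the paper appeals in one line to upper semicontinuity and compactness of $\Uset$; these are the same argument, yours just unrolled.
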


\begin{theopargself}
\begin{proof}[of Theorem {\ref{th:ldp}}]
  Let $O\subset\R^d$ be open, and $\zeta\in\Uset\cap O$. Then $\xhat_n(\zeta)\in nO$ 
for large $n$.  
	\begin{align*}
	&\varliminf_{n\to\infty}n^{-1}\log Q_{n}^{g,\w}\{X_n/n\in O\} \\
&\ge \varliminf_{n\to\infty} \biggl\{  n^{-1}\log E\bigl[e^{n R_n^{\oneell}(g)}\one\{X_n=\xhat_n(\zeta)\}\bigr]
-  n^{-1}\log E\bigl[e^{n R_n^{\oneell}(g)} \bigr] \,\biggr\}  \\
&= \pres_\ell(g,\zeta) - \pres_\ell(g).  
\end{align*}	
A supremum over an open set does not feel the difference between a function and its 
  upper semicontinuous regularization, and so we get the lower large deviation bound:
 \[   \varliminf_{n\to\infty}n^{-1}\log Q_{n}^{g,\w}\{X_n/n\in O\} \ge  - \inf_{\zeta\in O} 
 \{  \pres_\ell(g) - \pres^{\text{usc}}_\ell(g,\zeta) \}. \]
 
For a closed set $K\subset\R^d$ and $\delta>0$ Lemma \ref{lm-pressure2} implies
	\begin{align*}
	\varlimsup_{n\to\infty}n^{-1}\log Q_{n}^{g,\w}\{X_n/n\in K\}
	&\le-\lim_{\delta\searrow0}\inf_{\zeta\in K_\delta}\{\pres_\ell(g)-\pres_\ell(g,\zeta)\}\\
	&\le-\lim_{\delta\searrow0}\inf_{\zeta\in K_\delta}\{\pres_\ell(g)-\pres^{\mathrm{usc}}_\ell(g,\zeta)\}\\
	&=-\inf_{\zeta\in K}\{\pres_\ell(g)-\pres_\ell^{\mathrm{usc}}(g,\zeta)\}.
	\end{align*}
The last limit $\delta\searrow0$ follows from the compactness of $\Uset$. 
Properties of $I^g$ follow from Theorem \ref{th:regular}. 
 \qed
\end{proof} 
\end{theopargself}

\begin{remark}
Since the rate function $I^g$ is convex, it is the convex dual of the limiting logarithmic
moment generating function
\[  \sigma(t)= \lim_{n\to\infty} n^{-1}\log E^{Q_n^{g,\w}}(e^{t\cdot X_n}) = 
\pres_\ell(g+t\cdot z_1) - \pres_\ell(g)
\]
 on $\R^d$. This gives the identity 
\beq
-\pres_\ell^{\rm usc}(g,\zeta)=\sup_{t\in\R^d}\{\zeta\cdot t-\pres_\ell(g+t\cdot z_1)\}.
\label{pres19}\eeq
This identity can be combined with a variational representation for 
$\pres_\ell(g+t\cdot z_1)$ from Theorem 2.3 from \cite{Ras-Sep-Yil-12-} to 
produce a representation for $\pres_\ell^{\rm usc}(g,\zeta)$.  
\label{ldp-dual-rmk}\end{remark} 

As a corollary   we state a level 1 LDP for RWRE (see Example \ref{ex:rwre}).   

\begin{theorem} Let $d\ge 1$. Consider RWRE on $\Z^d$  in an ergodic environment 
with a finite set $\range\subset\Z^d$ of admissible steps.  
Assume that $g(\w,z)=\log p_{z}(\w)$ is a member of $\Ll$.
Then there exists
a continuous, convex rate function $I:\Uset\to[0,\infty)$  such that,  
  for $\P$-a.e.\ $\w$,  the distributions  $Q^{\w}\{X_n/n\in\cdot\,\}$ on $\Uset$ 
satisfy an LDP with rate $I$.  For $\zeta\in\ri\Uset$, $I(\zeta)$ is the limit of point probabilities:
\beq\label{level1 rate}
I(\zeta)=-\lim_{n\to\infty} n^{-1}\log Q^\w_0\{ X_n=\xhat_n(\zeta)\} \quad \text{a.s.}   
\eeq
\label{rwre-ldp|}\end{theorem}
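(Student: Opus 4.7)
The plan is to recognize this theorem as essentially a specialization of Theorem \ref{th:ldp} and Theorem \ref{th:regular} to the particular choice of potential $g(\w,z)=\log p_z(\w)$ with $\ell=1$. With this choice,
\[
e^{\sum_{k=0}^{n-1} g(T_{X_k}\w,Z_{k+1})} = \prod_{k=0}^{n-1} p_{Z_{k+1}}(T_{X_k}\w),
\]
and since $P$ is the uniform walk with $\hat p_z = 1/|\range|$, a direct computation (integrating out $Z_n$, then $Z_{n-1}$, etc., using $\sum_z p_z(T_x\w)=1$) gives $Z_n^{g,\w} = |\range|^{-n}$. Consequently $\pres_1(g;\w) = -\log|\range|$ is deterministic and finite, and the $X_{0,n}$-marginal of $Q^{g,\w}_n$ coincides with $Q^\w_0$, as noted already in Example \ref{ex:rwre}. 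Since $g\in\Ll$ is assumed and $\pres_1(g)$ is finite, all the hypotheses of Theorem \ref{th:ldp} are in force.

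I would then directly invoke Theorem \ref{th:ldp} to obtain, for $\P$-a.e.\ $\w$, the large deviation principle for $Q^\w_0\{X_n/n\in\cdot\,\}$ with the deterministic rate function
\[
I(\zeta) = I^g(\zeta) = -\log|\range| - \pres_1^{\mathrm{usc}(\zeta)}(g,\zeta).
\]
Theorem \ref{th:ldp} already supplies convexity on $\R^d$ and finiteness and continuity on $\Uset$. For convexity itself one can also argue from Theorem \ref{th:regular}(b): $\pres_1(g,\cdot)$ is concave and continuous on $\ri\Uset$, so its unique continuous extension to $\Uset$ (which equals $\pres_1^{\mathrm{usc}}(g,\cdot)$) is concave on all of $\Uset$, making $I$ convex there.

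It remains to establish the point-probability formula \eqref{level1 rate} for $\zeta\in\ri\Uset$. Because every admissible $n$-step path carries uniform $P$-mass $|\range|^{-n}$,
\[
E\bigl[e^{n R_n^1(g)}\one\{X_n=\xhat_n(\zeta)\}\bigr] = |\range|^{-n}\, Q^\w_0\{X_n=\xhat_n(\zeta)\}.
\]
Taking $n^{-1}\log$ and applying Theorem \ref{th:pt2pt}(a) yields
\[
\pres_1(g,\zeta;\w) = -\log|\range| + \lim_{n\to\infty} n^{-1}\log Q^\w_0\{X_n=\xhat_n(\zeta)\}
\]
on a single event of full $\P$-measure, simultaneously for all $\zeta\in\Uset$. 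For $\zeta\in\ri\Uset$, concavity and continuity of $\pres_1(g,\cdot)$ on the relative interior (Theorem \ref{th:regular}(b)) make the upper-semicontinuous regularization redundant, so $I(\zeta) = -\log|\range|-\pres_1(g,\zeta)$, which rearranges to \eqref{level1 rate}.

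I do not expect any genuine obstacle: every ingredient has been developed earlier in the paper. The only bookkeeping to keep straight is the normalization factor $|\range|^{-n}$ coming from the reference uniform walk $P$, which shifts $\pres_1(g)$ by $-\log|\range|$ but cancels cleanly inside $I$, and the check that the usc regularization is not needed on $\ri\Uset$, which is immediate from the continuity clause of Theorem \ref{th:regular}(b).
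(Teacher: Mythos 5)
Your proposal is correct and is exactly the specialization the paper intends when it labels this a corollary of Theorem \ref{th:ldp}: the identification $Z_n^{g,\w}=|\range|^{-n}$, the consequent identity of the $X_{0,n}$-marginal of $Q^{g,\w}_n$ with $Q^\w_0$ (already noted in Example \ref{ex:rwre}), and the cancellation of the $-\log|\range|$ normalization inside $I^g$, together with Theorem \ref{th:pt2pt}(a) for the point-probability limit and Theorem \ref{th:regular}(b) to drop the usc regularization on $\ri\Uset$. The bookkeeping you flag is the only real content, and you handle it correctly.
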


%

This theorem complements our level 3 quenched LDPs  in \cite{Ras-Sep-11,Ras-Sep-Yil-12-} with formula \eqref{level1 rate} and the continuity of the rate function, in particular in the case where $0\not\in\Uset$ and $g$ is unbounded (e.g.\ if $\P$ has enough mixing 
and $g$ enough moments). To put the theorem in perspective we 
give a quick tour of the history of  quenched  large deviation theory of RWRE. 

The development began with the quenched level 1 LDP of 
Greven and den Hollander \cite{Gre-Hol-94} for 
  the one-dimensional elliptic nearest-neighbor i.i.d.\ case ($d=1$, $\range=\{-1,+1\}$, and $g$ bounded). Their proof utilized an auxiliary branching process.
The LDP was extended to the ergodic case by 
Comets, Gantert, and Zeitouni \cite{Com-Gan-Zei-00}, using hitting times.  Both results relied on the  possibility of explicit computations  in the one-dimensional nearest-neighbor case (which in particular implies $0\in\Uset$).
When $d\ge2$ Zerner \cite{Zer-98-aop}  used a subadditivity argument for certain passage times to prove the level 1 LDP 
in the nearest-neighbor  i.i.d.\ nestling case with  $g\in L^d$.   The nestling assumption ($0$ belongs to the convex hull of the support of $\sum_z zp_z(\w)$, and thus in particular $0\in\Uset$) was crucial for Zerner's 
argument.  Later, Varadhan \cite{Var-03-cpam}  used subadditivity directly to get the result for a general ergodic 
environment with finite step size, $0\in\Uset$, and bounded $g$. 

Subadditivity methods often fail to  provide   formulas for   rate functions. Rosenbluth \cite{Ros-06} used   the point of view of the particle,
following ideas of Kosygina, Rezakhanlou, and Varadhan \cite{Kos-Rez-Var-06} for 
diffusions with random drift, and gave an alternative proof of the quenched level 1 LDP along with two
variational formulas for the rate function. The assumptions were that the walk is nearest-neighbor,
$\P$ is ergodic, and $g\in L^p$ for  some $p>d$. That the walk is nearest-neighbor in \cite{Ros-06} is certainly not a serious
obstacle and can be replaced with a finite  $\range$  as long as $0\in\Uset$. 
Y\i lmaz \cite{Yil-09-cpam} extended the quenched LDP and rate function formulas to a univariate level 2 quenched LDP and Rassoul-Agha and Sepp\"al\"ainen \cite{Ras-Sep-11} extended further to level 3 results. 
%

All the past results mentioned above are for cases with $0\in\Uset$. This
restriction  eliminates natural  important models such as the space-time
case. 
When $0\not\in\Uset$, a crucial uniform integrability estimate fails  and the method of
 \cite{Kos-Rez-Var-06,Ros-06,Yil-09-cpam,Ras-Sep-11} breaks down.
For diffusions in time-dependent but bounded random potentials 
this issue was resolved by Kosygina and Varadhan \cite{Kos-Var-08}.  
 For  random polymers and  RWRE the way around this problem was found
 by 
  Rassoul-Agha, Sepp\"al\"ainen, and Y\i lmaz \cite{Ras-Sep-Yil-12-} who proved a quenched level 3 LDP with potential $g\in\Ll$ even when $0\not\in\Uset$. 
For the precise location of the difficulty see step 5 on page  833 of \cite{Kos-Var-08} and the proof of Lemma 2.13 of \cite{Ras-Sep-Yil-12-}. 
In a separate work \cite{Cam-etal-12-} we showed that 
the method  of \cite{Var-03-cpam} works 
also  in the space-time case  $\range\subset\{z:z\cdot e_1=1\}$, but with $g$ 
  assumed bounded.  

Limit \eqref{level1 rate} has been previously shown
for  various restricted cases: 
 in \cite{Gre-Hol-94} ($d=1$, $\P$ i.i.d., $\range=\{-1,1\}$, $g$ bounded), \cite{Zer-98-aop} ($\P$ i.i.d.\ , nestling, $g\in L^d$), 
\cite{Var-03-cpam} ($\P$ ergodic, $0\in\Uset$, $g$ bounded), and \cite{Cam-etal-12-} ($\P$ ergodic, $g$ bounded, and $\range\subset\{z:z\cdot e_1=1\}$).
\cite{Gre-Hol-94,Cam-etal-12-} also proved continuity of the rate function. 


Let us finally  point out that   \cite{Arm-Sou-12} obtains homogenization results similar to 
\cite{Kos-Var-08}   for unbounded potentials, but has   to compensate   with a mixing  assumption. This is the same spirit in which
our assumption  $g\in\Ll$ works.  

 

\section{Entropy representation of the point-to-point free energy} 
\label{iid-sec}

With either a compact $\Omega$ or an i.i.d.\ directed setting,  
the LDP of Theorem \ref{th:ldp} can  be obtained by contraction from
the higher level LDPs of \cite{Ras-Sep-Yil-12-}.  This is the route to linking
$\pres_\ell(g,\zeta)$ with entropy.   First we define the entropy.  

 The joint evolution of the environment and the walk give a Markov chain  
 $(T_{X_n}\w, Z_{n+1,n+\ell})$ on the state space $\bigom_\ell=\Omega\times\range^\ell$.
  Elements of $\bigom_\ell$ are 
denoted by  $\wz=(\w,\,z_{1,\ell})$.    The transition kernel is 
 	\begin{align}\label{pell-def}
		&\pell(\wz,\Sopr_z\wz)=\tfrac1{|\range|} \, \text{ for }z\in\range\text{ and }\wz=(\w,z_{1,\ell})\in\bigom_\ell    
	\end{align}
 where the transformations  $\Sopr_z$ are defined by  
 $\Sopr_z(\w,z_{1,\ell})=(T_{z_1}\w, (z_{2,\ell},z))$.   
An  entropy $\ratell$ that is naturally associated to this Markov chain and   reflects the
role of the background measure is defined as follows.  
   Let $\mu_0$ denote the $\Omega$-marginal of a probability measure $\mu\in\measures(\bigom_\ell)$.
  Define  
 	\begin{align}\label{Helldef}
		\ratell(\mu)=
			\begin{cases}
				\inf\{H(\mu\times q\,|\,\mu\times \pell):q\in\MC(\bigom_\ell)\text{ with }\mu q=\mu\}&\text{if }\mu_0\ll\P,\\
				\infty&\text{otherwise.}
			\end{cases}
	\end{align}
The infimum is over Markov kernels $q$ on $\bigom_\ell$ that fix $\mu$.
Inside the braces the familiar relative entropy is  
\beq
	H(\mu\times q\,|\,\mu\times\pell)
	= \int_{\bigom_\ell} \sum_{z\in\range}q(\wz,\Sopr_z\wz)\,\log\frac{q(\wz,\Sopr_z\wz)}{\pell(\wz,\Sopr_z\wz)}\,\mu(d\wz).
\eeq
Obviously  $q(\wz,\Sopr_z\wz)$ is not the most general Markov kernel on $\bigom_\ell$. 
But the entropy cannot be finite unless the kernel is supported on shifts  $\Sopr_z\wz$,
so we might as well restrict to this case.   
$\ratell: \measures(\bigom_\ell)\to [0,\infty]$ is convex. (The  argument for this
can be found at the end of Section 4 in \cite{Ras-Sep-11}.)   

 The quenched free energy has  this  variational characterization for $g\in\mathcal L$
   (Theorem 2.3 in  \cite{Ras-Sep-Yil-12-}):
 \beq  \pres_\ell(g)=\sup_{\substack{\mu\in\measures(\bigom_\ell), c>0}}
\bigl\{E^\mu[\min(g,c)]-\ratell(\mu)\bigr\}.   
\label{pr-ent-4.1}
\eeq
Our goal is to find such characterizations for the point-to-point free energy. 
We develop the formula in the i.i.d.\ directed setting.  Such a formula is also
valid in the more general setting of this paper if  $\Omega$ is a compact metric space. 
Details can be found in the preprint version \cite{Ras-Sep-12-arxiv}.


Let 
$\Omega=\Gamma^{\Z^d}$ be a product space with shifts $\{T_x\}$ 
and    $\P$  an i.i.d.\ product measure as in Example \ref{ex:product}.
Assume   $0\notin\Uset$.  Then 
the free energies $\pres_\ell(g)$ and  $\pres_\ell(g, \zeta)$
are deterministic (that is, the $\P$-a.s.\ limits are independent of the environment
$\w$) and $\pres_\ell(g, \zeta)$ is a continuous, concave function of $\zeta\in\Uset$.  
Assume also that $\Gamma$ is a separable  metric space,  and that   $\kS$  is   the product  
of Borel  $\sigma$-algebras, thereby also the  
Borel $\sigma$-algebra of $\Omega$.  

To utilize convex analysis we put  the space $\mathcal M$ of finite Borel measures
on $\bigom_\ell$ in duality with $C_b(\bigom_\ell)$, the space of bounded continuous
functions on $\bigom_\ell$, via integration:   $\langle f,\mu\rangle=\int f\,d\mu$. 
Give $\mathcal M$ the weak topology generated by $C_b(\bigom_\ell)$.  
Metrize $C_b(\bigom_\ell)$ with the supremum norm.   
The limit definition \eqref{xipres}  shows 
 that  $\pres_\ell(g)$ and $\pres_\ell(g,\zeta)$ are  Lipschitz in $g$,
uniformly in $\zeta$.  
$\ratell$ is extended to $\mathcal M$ by setting $\ratell(\mu)=\infty$ for 
measures $\mu$ that are not probability measures.  

For $g\in C_b(\bigom_\ell)$ \eqref{pr-ent-4.1} says that $\pres_\ell(g)=\ratell^*(g)$,
the convex conjugate of $\ratell$.  The double convex conjugate 
\beq \ratell^{**}(\mu) =  \pres^*_\ell(\mu)
= \sup_{f\in C_b(\bigom_\ell)}\{E^\mu[f]- \pres_\ell(f)\},  \quad\mu\in\measures(\bigom_\ell),
\label{H**}\eeq
is equal to  the  lower semicontinuous regularization $\ratell^{\rm{lsc}}$ of  $\ratell$    
(Propositions~3.3 and 4.1 in \cite{Eke-Tem-99} or Theorem~5.18 in \cite{Ras-Sep-10-ldp-}).
Since relative entropy is lower semicontinuous,  \eqref{Helldef} implies that 
\beq
\ratell^{**}(\mu)=\ratell(\mu)  \quad\text{ for $\mu\in\measures(\bigom_\ell)$
such that  $\mu_0\ll\P$. }
\label{H4}\eeq

There is a  quenched   LDP for
the distributions 
$Q_n^{g,\w}\{R_n^{\oneell}\in\cdot\}$, where $R_n^{\oneell}$
is    the empirical measure defined in \eqref{Rnell}.   
 The rate function of this LDP is    $\ratell^{**}$ 
(Theorems~3.1 and 3.3 of \cite{Ras-Sep-Yil-12-}). 

The  reader may be concerned about considering 
the $\P$-a.s. defined functionals  $\pres_\ell(g)$ or $\pres_\ell(g, \zeta)$
on the possibly non-separable function space  $C_b(\bigom_\ell)$.   However,  
for bounded functions we can integrate over the limits \eqref{xipres} and 
\eqref{pres-limit}  and define the free energies without any ``a.s.\ ambiguity'', so
for example
\[   \pres_\ell(g,\zeta)=
	\lim_{n\to\infty}n^{-1}  \E\Bigl( \log 
E\big[e^{n R_n^{\oneell}(g)}\one\{X_n=\xhat_n(\zeta)\}\big]\Bigr).   \]

We extend the  duality set-up to  involve  point to point free energy. 

\begin{theorem}   Let 
$\Omega=\Gamma^{\Z^d}$ be a product of separable  metric spaces  with 
Borel $\sigma$-algebra $\kS$,  shifts $\{T_x\}$, and an 
   an i.i.d.\ product measure  $\P$.  
 Assume   $0\notin\Uset$. 
With $\ell\ge 1$, let  $\mu\in\measures(\bigom_\ell)$ and 
$\zeta=E^\mu[Z_1]$.   Then 
\beq
\ratell^{**}(\mu)  = \sup_{g\in C_b(\bigom_\ell)}\{E^\mu[g]- \pres_\ell(g, \zeta)\}.
\label{H8}\eeq
On the other hand, for $f\in C_b(\bigom_\ell)$ and $\zeta\in\Uset$, 
\beq
\pres_\ell(f,\zeta)= 
\sup_{\mu\in\measures(\bigom_\ell) :\,E^\mu[Z_1]=\zeta}  \{E^\mu[f]- \ratell^{**}(\mu)   \}.  
\label{H9}\eeq 
Equation \eqref{H9} is valid also when $\ratell^{**}(\mu)$ is 
replaced with $\ratell(\mu)$:
\beq
\pres_\ell(f,\zeta)= 
\sup_{\mu\in\measures(\bigom_\ell) :\,E^\mu[Z_1]=\zeta}  \{E^\mu[f]- \ratell(\mu)   \}.  
\label{H9.1}\eeq 
\label{H-thm1}\end{theorem}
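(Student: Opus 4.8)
The plan is to prove the three identities \eqref{H8}, \eqref{H9}, \eqref{H9.1} in sequence, building on the already-established convex-duality facts, namely that $\pres_\ell(g)=\ratell^*(g)$ on $C_b(\bigom_\ell)$, that $\ratell^{**}=\ratell^{\rm lsc}$, and that $\ratell^{**}(\mu)=\ratell(\mu)$ whenever $\mu_0\ll\P$. The organizing idea is that both sides of \eqref{H8} and \eqref{H9} are convex conjugates of each other, once the linear constraint $E^\mu[Z_1]=\zeta$ is incorporated via a Lagrange-multiplier ($=$ tilt) argument. The connecting bridge between the point-to-point free energy and the point-to-line free energy is the duality already recorded in Remark~\ref{ldp-dual-rmk}, equation \eqref{pres19}: $-\pres_\ell^{\rm usc}(g,\zeta)=\sup_{t\in\R^d}\{\zeta\cdot t-\pres_\ell(g+t\cdot z_1)\}$. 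Since $0\notin\Uset$ the function $\pres_\ell(g,\cdot)$ is continuous on $\Uset$ by Theorem~\ref{th:pt2pt cont}(a), so the $\mathrm{usc}$ regularization is harmless and we may use \eqref{pres19} freely with equality.

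First I would prove \eqref{H9}. Fix $f\in C_b(\bigom_\ell)$ and $\zeta\in\Uset$. For the ``$\ge$'' direction: given any $\mu$ with $E^\mu[Z_1]=\zeta$, apply \eqref{H8} (to be proved independently, see below) — or more directly, for any $g\in C_b(\bigom_\ell)$ one has $E^\mu[g]-\pres_\ell(g,\zeta)\le\ratell^{**}(\mu)$; take $g=f$ and rearrange. For the ``$\le$'' direction I would use \eqref{pres19}: write $\pres_\ell(f,\zeta)=\inf_{t\in\R^d}\{\pres_\ell(f+t\cdot z_1)+t\cdot\zeta\}\cdot(-1)$... more carefully, \eqref{pres19} gives $\pres_\ell(f,\zeta)=\inf_{t}\{\pres_\ell(f+t\cdot z_1)-t\cdot\zeta\}$. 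Then insert the point-to-line variational formula \eqref{pr-ent-4.1}, i.e. $\pres_\ell(f+t\cdot z_1)=\sup_{\mu,c}\{E^\mu[\min(f+t\cdot z_1,c)]-\ratell(\mu)\}$, and use boundedness of $f$ to drop the truncation (take $c\to\infty$; the tilt term $t\cdot z_1$ is bounded on $\range^\ell$). One obtains $\pres_\ell(f,\zeta)=\inf_t\sup_\mu\{E^\mu[f]+t\cdot(E^\mu[Z_1]-\zeta)-\ratell(\mu)\}$, and a minimax exchange — justified by compactness of $\Uset$ in the $t$-direction after restricting attention to $\mu$ supported where $E^\mu[Z_1]\in\Uset$, or by Sion's theorem using convexity/concavity — collapses the inner problem: for fixed $\mu$, $\inf_t\{t\cdot(E^\mu[Z_1]-\zeta)\}$ is $0$ if $E^\mu[Z_1]=\zeta$ and $-\infty$ otherwise. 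This yields \eqref{H9}, and simultaneously \eqref{H9.1}, because the only $\mu$'s that survive the supremum have $\ratell(\mu)<\infty$, hence $\mu_0\ll\P$, hence $\ratell(\mu)=\ratell^{**}(\mu)$ by \eqref{H4}.

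Then \eqref{H8} follows by dualizing \eqref{H9}: for fixed $\mu$ with $\zeta=E^\mu[Z_1]$, the right side of \eqref{H8} is $\sup_{g\in C_b}\{E^\mu[g]-\pres_\ell(g,\zeta)\}$; substituting the just-proved \eqref{H9} for $\pres_\ell(g,\zeta)$ and exchanging the two suprema shows this equals $\sup_{g}\inf_{\nu:E^\nu[Z_1]=\zeta}\{E^\mu[g]-E^\nu[g]+\ratell^{**}(\nu)\}$, and choosing $\nu=\mu$ gives $\le\ratell^{**}(\mu)$, while the reverse inequality $\ge\ratell^{**}(\mu)$ is exactly \eqref{H**} combined with $\pres_\ell(g)\ge\pres_\ell(g,\zeta)$ from \eqref{sup pt2pt}... wait, that gives the wrong direction, so instead I note that $\ratell^{**}(\mu)=\sup_{f\in C_b}\{E^\mu[f]-\pres_\ell(f)\}$ and that by \eqref{sup pt2pt} and the definition of $\zeta$, whenever $E^\mu[Z_1]=\zeta$ one in fact has the sharper bound $E^\mu[f]-\pres_\ell(f,\zeta)\le\ratell^{**}(\mu)$ by taking $f$-tilts to localize the point-to-line sup at velocity $\zeta$; making this precise again uses \eqref{pres19}. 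The main obstacle I anticipate is the minimax exchange and the passage from truncated $\min(g,c)$ to $g$: one must ensure the Lagrangian $\sup_\mu$ is not spuriously $+\infty$ and that the tilt variable $t$ can be confined to a compact set (this is where $0\notin\Uset$, forcing $\zeta$ into the interior directions of $\Uset$ and giving coercivity of $t\mapsto\pres_\ell(f+t\cdot z_1)-t\cdot\zeta$). The remaining steps — lower semicontinuity of relative entropy, $\ratell(\mu)<\infty\Rightarrow\mu_0\ll\P$, and Lipschitz continuity of $\pres_\ell(\cdot,\zeta)$ in $g$ — are all quoted from earlier in the paper or are routine.
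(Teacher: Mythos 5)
Your approach is genuinely different from the paper's, but it has a gap precisely at the step you yourself flag as delicate: the minimax exchange. You want to swap $\inf_{t\in\R^d}\sup_{\mu\in\measures(\bigom_\ell)}$ in the Lagrangian
$L(t,\mu)=E^\mu[f]+t\cdot(E^\mu[Z_1]-\zeta)-\ratell(\mu)$. Standard minimax theorems (K\"onig, Sion) need at least one domain compact, but here the tilt $t$ ranges over all of $\R^d$ and $\measures(\bigom_\ell)$ is not weakly compact ($\bigom_\ell=\Omega\times\range^\ell$ with $\Omega$ a non-compact product space). Your suggested fixes do not close this: the observation $E^\mu[Z_1]\in\Uset$ is true for every $\mu$ and says nothing about the $t$-domain, and $0\notin\Uset$ does not give the coercivity you invoke --- for $\zeta$ on the relative boundary of $\Uset$ the map $t\mapsto\pres_\ell(f+t\cdot z_1)-t\cdot\zeta$ typically has no minimizer and its infimum is attained only in a limiting sense. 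By contrast, the paper takes its minimax on $C_b(\bigom_\ell)\times\Uset$, where $\Uset$ is compact; writing $\pres_\ell^*(\mu)=\sup_g\inf_{\zeta\in\Uset}\{E^\mu[g]-\pres_\ell(g,\zeta)\}$ (using \eqref{sup pt2pt}) and then swapping with K\"onig is clean, because one domain is a compact convex set and the integrand is continuous in $(g,\zeta)$ by Theorem~\ref{th:pt2pt cont}(a). That is the structural reason the paper works through the velocity variable $\zeta\in\Uset$ rather than the tilt variable $t\in\R^d$.

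A second, smaller issue is the logical interplay between \eqref{H9} and \eqref{H9.1}. You claim they come out simultaneously because maximizers $\mu$ must have $\ratell(\mu)<\infty$ hence $\mu_0\ll\P$ hence $\ratell=\ratell^{**}$. But passing between the two supremums requires knowing that dropping to $\{\mu:\mu_0\ll\P\}$ does not lower the supremum of $E^\mu[f]-\ratell^{**}(\mu)$ --- a $\mu$ with $\mu_0\not\ll\P$ can still have $\ratell^{**}(\mu)<\infty$ (it only forces $\ratell(\mu)=\infty$), so a priori such $\mu$ could beat all absolutely continuous competitors. Closing this gap needs an approximation or regularization argument; the paper does it by an upper-semicontinuous regularization in $\zeta$ using compactness of the sublevel sets $\{\ratell^{**}\le c\}$ (exponential tightness) together with continuity of $\pres_\ell(f,\cdot)$ on $\ri\Uset$, and a face-restriction trick for boundary $\zeta$. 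Your route would need an analogous device. On the positive side, your derivation of \eqref{H8} from \eqref{H9} by taking $\nu=\mu$ inside the infimum is correct and a little cleaner than you make it sound --- the earlier hand-wringing about the direction of the inequality is resolved once you notice that $E^\mu[g]-\pres_\ell(g,\zeta)=\inf_{\nu:E^\nu[Z_1]=\zeta}\{E^\mu[g]-E^\nu[g]+\ratell^{**}(\nu)\}\le\ratell^{**}(\mu)$ directly, no minimax needed.
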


\begin{proof}
With fixed $\zeta$, introduce  the convex conjugate of $\pres_\ell(g,\zeta)$  by 
\beq \pres_\ell^*(\mu,\zeta)=\sup_{g\in C_b(\bigom_\ell)}\{E^\mu[g]-\pres_\ell(g,\zeta)\}.
\label{H10}\eeq  
Taking $g(\w,z_{1,\ell})=a\cdot z_1$ gives
$\pres_\ell^*(\mu,\zeta)\ge a\cdot(E^\mu[Z_1]-\zeta)-\log\abs{\range_0}.$
Thus  $\pres_\ell^*(\mu,\zeta)=\infty$ unless   $E^\mu[Z_1]=\zeta$.

From  Theorems \ref{th:regular} and \ref{th:pt2pt cont}, 
$E^\mu[g]-\pres_\ell(g,\zeta)$ is concave in $g$,
convex in $\zeta$, and continuous in both over $C_b(\bigom_\ell)\times\Uset$. 
Since $\Uset$ is compact we can apply 
a minimax theorem such as  K\"onig's theorem  \cite{Kas-94,Ras-Sep-10-ldp-}. 
Utilizing \eqref{sup pt2pt}, 
\begin{align*}
\pres_\ell^*(\mu)&=\sup_{g\in C_b(\bigom_\ell)}\{E^\mu[g]-\pres_\ell(g)\}\\
&=\sup_{g\in C_b(\bigom_\ell)}\inf_{\zeta\in \Uset}\{E^\mu[g]-\pres_\ell(g,\zeta)\}
=\inf_{\zeta\in \Uset}\pres_\ell^*(\mu,\zeta).
\end{align*}
Thus, if $E^\mu[Z_1]=\zeta$, then $\pres_\ell^*(\mu)=\pres_\ell^*(\mu,\zeta)$. 
Since $\ratell^{**}(\mu)=\pres_\ell^*(\mu)$,  \eqref{H8} follows from \eqref{H10}.  

By double convex duality  (Fenchel-Moreau theorem, see e.g.\ \cite{Ras-Sep-10-ldp-}),  
  for $f\in C_b(\bigom_\ell)$,  
\[\pres_\ell(f,\zeta)=\sup_\mu\{E^\mu[f]-\pres_\ell^*(\mu,\zeta)\}=
\sup_{\mu :\,E^\mu[Z_1]=\zeta}  \{E^\mu[f]-\pres_\ell^*(\mu)\}        \]
and \eqref{H9} follows. 

To replace $\ratell^{**}(\mu)$ with $\ratell(\mu)$ in \eqref{H9},  first consider the case 
  $\zeta\in\ri\Uset$.
\begin{align*}
&\sup_{\mu\in\measures(\bigom_\ell) :\,E^\mu[Z_1]=\zeta}  \{E^\mu[f]- \ratell^{**}(\mu)  \}\\
&\qquad = \sup_{\mu\in\measures(\bigom_\ell) :\,E^\mu[Z_1]=\zeta}  \{E^\mu[f]- \ratell(\mu) \}^{\text{usc($\mu$)}}\\
 &\qquad = \Bigl( \; \, \sup_{\mu\in\measures(\bigom_\ell) :\,E^\mu[Z_1]=\zeta}  \{E^\mu[f]- \ratell(\mu) \}\Bigr)^{\text{usc($\zeta$)}}\\
&\qquad  = \sup_{\mu\in\measures(\bigom_\ell) :\,E^\mu[Z_1]=\zeta}  \{E^\mu[f]- \ratell(\mu) \}. 
\end{align*}
The first equality is the continuity of $\mu\mapsto E^\mu[f]$.  
  The second is a consequence of 
the compact sublevel sets of $\{\mu: \ratell^{**}(\mu)\le c\}$.  This
compactness follows from  the exponential
tightness in the LDP controlled by the rate  $\ratell^{**}$, given by Theorem 3.3 in \cite{Ras-Sep-Yil-12-}.
The last equality follows because concavity gives continuity on $\ri\Uset$.  

For $\zeta\in\Uset\smallsetminus\ri\Uset$, let 
    $\Uset_0$ be the unique face such that $\zeta\in\ri\Uset_0$.  Then 
$\Uset_0=\conv\range_0$ where $\range_0=\Uset_0\cap\range$, and any path
to $\xhat_n(\zeta)$ will use only $\range_0$-steps.   This case 
reduces to the one   already proved, because all the quantities in \eqref{H9.1} 
are the same as those in a  new model where $\range$ is replaced by $\range_0$
and then $\Uset$ is replaced by $\Uset_0$. 
(Except for the extra terms coming from renormalizing the restricted
 jump kernel $\{\hat p_z\}_{z\in\range_0}$.) 
In particular,   $E^\mu[Z_1]=\zeta$ forces $\mu$ to be supported on
$\Omega\times\range_0^\ell$, and consequently any kernel $q(\wz, S_z^+\wz)$ 
that fixes $\mu$ is supported on shifts by  $z\in\range_0$.  
 \qed \end{proof}  

Next we extend the duality to certain  $L^p$ functions.  

\begin{corollary}  Same assumptions on $\Omega$, $\P$ and $\range$ as in Theorem
\ref{H-thm1}.
 Let $\mu\in\measures(\bigom_\ell)$ and $\zeta=E^\mu[Z_1]$. 
  Then the inequalities
\beq    E^\mu[g]-\pres_\ell(g) \le  \ratell^{**}(\mu)
\label{dual3}\eeq
and 
\beq    E^\mu[g]-\pres_\ell(g, \zeta) \le  \ratell^{**}(\mu)
\label{dual3.1}\eeq
are valid for all functions $g$ such that $g(\cdot, z_{1,\ell})$ is local
and in $L^p(\P)$ 
for all $z_{1,\ell}$ and some $p>d$, and $g$ is either bounded above or bounded
below.  
\end{corollary}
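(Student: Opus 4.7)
The plan is to extend the variational inequality \eqref{H8} from $C_b(\bigom_\ell)$ to the broader class in the corollary through two successive approximations. First observe that \eqref{dual3} follows immediately from \eqref{dual3.1} together with $\pres_\ell(g,\zeta)\le \pres_\ell(g)$, which comes from \eqref{sup pt2pt}, so only \eqref{dual3.1} needs a proof. We may assume $\ratell^{**}(\mu)<\infty$, the complementary case being trivial since $\ratell^{**}\ge 0$. Fix $\mu$ throughout and set $\zeta=E^\mu[Z_1]\in\Uset$.

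Step 1 treats bounded local measurable $g$. Since $g(\cdot\,,z_{1,\ell})$ is local for each $z_{1,\ell}$ and $\range^\ell$ is finite, there is a single finite set $A\subset\Z^d$ such that $g$ is a bounded measurable function on $\Gamma^A\times\range^\ell$. Let $\nu$ denote the sum of the projections of $\P$ and $\mu_0$ to $\Gamma^A$, a finite Borel measure on the separable metric space $\Gamma^A$. For each fixed $z_{1,\ell}$, Lusin's theorem supplies continuous functions $g_n(\cdot\,,z_{1,\ell})$ on $\Gamma^A$ with $\|g_n(\cdot\,,z_{1,\ell})\|_\infty\le \|g(\cdot\,,z_{1,\ell})\|_\infty$ converging to $g(\cdot\,,z_{1,\ell})$ in $\nu$-measure, hence in $\P$- and $\mu_0$-measure simultaneously. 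Composing with the projection $\Omega\to\Gamma^A$ we obtain $g_n\in C_b(\bigom_\ell)$. Uniform boundedness combined with convergence in measure gives $g_n\to g$ in $L^p(\P)$ and $E^\mu[g_n]\to E^\mu[g]$. By \eqref{animal3} (applicable because $0\notin\Uset$) the free energies $\pres_\ell(g_n,\zeta)$ converge to $\pres_\ell(g,\zeta)$. Passing to the limit in the inequality $E^\mu[g_n]-\pres_\ell(g_n,\zeta)\le \ratell^{**}(\mu)$ supplied by \eqref{H8} yields \eqref{dual3.1} for bounded local measurable $g$.

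Step 2 removes the one-sided boundedness. Suppose $g$ is bounded above, the other case being symmetric, and let $g_c=g\vee c$ for $c<0$, so $g_c$ is bounded, local, measurable, and in $L^p(\P)$. As $c\to-\infty$, $g_c\searrow g$ pointwise, and the estimate $|g_c-g|\le |g|$ together with $g\in L^p(\P)$ gives $g_c\to g$ in $L^p(\P)$ by dominated convergence, hence $\pres_\ell(g_c,\zeta)\to \pres_\ell(g,\zeta)$ via \eqref{animal3}. Monotone convergence yields $E^\mu[g_c]\downarrow E^\mu[g]\in[-\infty,\infty)$, since $g_c$ is monotone and dominated above by an $\mu$-integrable constant. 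If $E^\mu[g]=-\infty$ the inequality is automatic; otherwise letting $c\to-\infty$ in \eqref{dual3.1} applied to $g_c$ completes the proof.

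The main obstacle is producing a single approximating sequence that works for both limits at once: the $\pres_\ell$ limit demands $L^p(\P)$ convergence via Lemma \ref{lm:animal}, while the expectation limit $E^\mu[g_n]\to E^\mu[g]$ demands some mode of $\mu$-convergence, and $\mu_0$ need not be absolutely continuous with respect to $\P$. Applying Lusin's theorem on the finite-dimensional space $\Gamma^A$ against the combined finite measure $\P_A+\mu_{0,A}$ handles this in one stroke.
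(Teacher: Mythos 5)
Your proof is correct, and it takes a genuinely different route from the paper's. The paper works with the class $\mathcal H$ of functions satisfying \eqref{dual3.1}, notes it contains bounded continuous local functions by \eqref{H8}, shows it is closed under bounded pointwise convergence of local functions with common support (because bounded pointwise convergence gives simultaneous $L^p(\P)$ convergence for the $\pres_\ell$ term via Lemma \ref{lm:animal1}(b), and $E^\mu$ convergence by bounded convergence), and then invokes the Baire--Borel coincidence on metric spaces (``general principles'') to conclude $\mathcal H$ contains all bounded local Borel functions; the final truncation step is essentially the same as yours. You instead go straight from a bounded local Borel $g$ to a bounded continuous approximant via Lusin's theorem, and the crucial observation is yours alone: applying Lusin against the single finite Borel measure $\P_A + \mu_{0,A}$ on $\Gamma^A$ gives one sequence that converges both in $\P$-measure and in $\mu_0$-measure, with a uniform bound, which is exactly what is needed to pass both $E^\mu[\cdot]$ and $\pres_\ell(\cdot,\zeta)$ to the limit simultaneously even though $\mu_0$ need not be absolutely continuous with respect to $\P$. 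The paper's monotone-class route handles the same two-measure issue implicitly, since pointwise-everywhere convergence is convergence under every measure; your version makes the issue and its resolution explicit and avoids the transfinite Baire hierarchy at the cost of invoking Lusin (which does hold for finite Borel measures on separable metric spaces via inner regularity by closed sets, Egorov, and Tietze, so no Polishness is needed). Both are complete and of comparable length.
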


\begin{proof}  Since $\pres_\ell(g, \zeta)\le \pres_\ell(g)$, \eqref{dual3} is
a consequence of \eqref{dual3.1}.  
  Let $\mathcal H$ denote the class of functions $g$ that 
satisfy   \eqref{dual3.1}.   $\mathcal H$
contains bounded   continuous local functions by \eqref{H8}.  
 
Bounded pointwise convergence implies $L^p$  convergence.  So by the
$L^p$ continuity of   $\pres_\ell(g, \zeta)$  
 (Lemma  \ref{lm:animal1}(b)),   $\mathcal H$  is closed under bounded 
 pointwise convergence of local functions with common support.   
 General principles now imply that $\mathcal H$ 
contains all bounded local Borel functions.  To reach the last generalization
 to functions bounded from only  one side,   
observe that their truncations converge both monotonically and in 
$L^p$, thereby making  both $E^\mu[g]$   and $\pres_\ell(g, \zeta)$ 
converge.    
\qed
\end{proof}    

Equation~\eqref{H9} gives us a variational representation for $\pres_\ell(g,\zeta)$  but
only for bounded continuous $g$.  We come finally to one of our main results,
the variational representation for general potentials $g$.  
 
\begin{theorem}\label{th:pr-ent2}
 Let 
$\Omega=\Gamma^{\Z^d}$ be a product of separable  metric spaces  with 
Borel $\sigma$-algebra $\kS$,  shifts $\{T_x\}$, and     an i.i.d.\ product measure  $\P$.  
 Assume   $0\notin\Uset$. 
Let $g:\bigom_\ell\to\R$ be a function such that  for each $z_{1,\ell}\in\range^\ell$, 
  $g(\cdot,z_{1,\ell}) $ is a local function of $\w$ and a member of $L^p(\P)$ for some $p>d$.  
Then  
for all  $\zeta\in\Uset$,  
\begin{align}\label{var-pt2pt2}
 \pres_\ell(g,\zeta)=\sup_{\substack{\mu\in\measures(\bigom_\ell): E^\mu[Z_1]=\zeta\\[2pt]c>0}}
\bigl\{E^\mu[g\wedge c]-\ratell^{**}(\mu)\bigr\}.
\end{align}
Equation  \eqref{var-pt2pt2} is valid also when $\ratell^{**}(\mu)$ is 
replaced with $\ratell(\mu)$.
\end{theorem}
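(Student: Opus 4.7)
The plan is to establish the two inequalities of \eqref{var-pt2pt2} separately, proving the $\ratell^{**}$-version first and deducing the $\ratell$-version from it.

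\emph{Upper bound} $\sup\le\pres_\ell(g,\zeta)$. For any $\mu$ with $E^\mu[Z_1]=\zeta$ and any $c>0$, the truncation $g\wedge c$ is local, bounded above, and lies in $L^p(\P)$, so inequality \eqref{dual3.1} applies and gives
\[
E^\mu[g\wedge c] - \ratell^{**}(\mu) \le \pres_\ell(g\wedge c,\zeta) \le \pres_\ell(g,\zeta),
\]
the second step by monotonicity of the defining limit \eqref{xipres} in $g$. Taking the supremum over $\mu$ and $c$ yields this direction.

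\emph{Lower bound} $\pres_\ell(g,\zeta)\le\sup$. Dominated convergence gives $g\wedge c\to g$ in $L^p(\P)$, and the $L^p$ continuity of Lemma \ref{lm:animal1}(b) then yields $\pres_\ell(g\wedge c,\zeta)\to\pres_\ell(g,\zeta)$. It therefore suffices to show, for each fixed $c$, that
\[
\pres_\ell(g\wedge c,\zeta) \le \sup_{\mu:\,E^\mu[Z_1]=\zeta}\bigl\{E^\mu[g\wedge c] - \ratell^{**}(\mu)\bigr\}.
\]
I would approximate $g\wedge c$ in $L^p(\P)$ by local functions $f_k\in C_b(\bigom_\ell)$ with $f_k\le c+1$; this is available by mollifying in the finitely many coordinates of the separable metric space $\Gamma$ and truncating. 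By $L^p$ continuity, $\pres_\ell(f_k,\zeta)\to\pres_\ell(g\wedge c,\zeta)$. Since each $f_k\in C_b(\bigom_\ell)$, equation \eqref{H9} produces $\mu_k$ with $E^{\mu_k}[Z_1]=\zeta$ and $E^{\mu_k}[f_k]-\ratell^{**}(\mu_k)\ge\pres_\ell(f_k,\zeta)-1/k$. The upper bound $f_k\le c+1$ together with the convergent (hence bounded) sequence $\pres_\ell(f_k,\zeta)$ force $\sup_k\ratell^{**}(\mu_k)=:C<\infty$. Rearranging gives
\[
E^{\mu_k}[g\wedge c] - \ratell^{**}(\mu_k) \;\ge\; \pres_\ell(f_k,\zeta) - \tfrac{1}{k} - E^{\mu_k}\bigl[f_k - g\wedge c\bigr].
\]

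\emph{Main obstacle}. The crux is to show $E^{\mu_k}[f_k - g\wedge c]\to 0$ despite the measures $\mu_k$ varying with $k$. The idea is to use \eqref{dual3.1} itself as a Donsker--Varadhan-type entropy inequality. For any fixed $\lambda>0$, apply \eqref{dual3.1} to the local, $L^p$, nonnegative function $\lambda|f_k - g\wedge c|$ under the measure $\mu_k$:
\[
\lambda\, E^{\mu_k}\bigl[|f_k - g\wedge c|\bigr] \;\le\; \pres_\ell\bigl(\lambda|f_k - g\wedge c|,\zeta\bigr) + \ratell^{**}(\mu_k) \;\le\; \pres_\ell\bigl(\lambda|f_k - g\wedge c|,\zeta\bigr) + C.
\]
Since $\lambda|f_k - g\wedge c|\to 0$ in $L^p(\P)$, Lemma \ref{lm:animal1}(b) gives $\pres_\ell(\lambda|f_k - g\wedge c|,\zeta)\to\pres_\ell(0,\zeta)$, a finite constant. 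Thus $\limsup_k E^{\mu_k}[|f_k - g\wedge c|]\le \lambda^{-1}(\pres_\ell(0,\zeta)+C)$, and letting $\lambda\to\infty$ completes the claim. Plugging back into the rearrangement above and letting $c\to\infty$ finishes the lower bound.

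\emph{The $\ratell$-version}. From $\ratell\ge\ratell^{**}$ the supremum with $\ratell$ is dominated by that with $\ratell^{**}$, hence by $\pres_\ell(g,\zeta)$. Conversely, any $\mu$ with finite $\ratell^{**}(\mu)$ satisfies $\mu_0\ll\P$, on which $\ratell^{**}(\mu)=\ratell(\mu)$ by \eqref{H4}; so the two suprema coincide on the relevant domain.
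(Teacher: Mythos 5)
Your main argument for the $\ratell^{**}$ version is correct and follows essentially the same route as the paper: approximate $g\wedge c$ in $L^p$ by local bounded continuous functions, invoke \eqref{H9} to produce approximating measures $\mu_k$ with $E^{\mu_k}[Z_1]=\zeta$, and then use the entropy/duality inequality to transfer the $L^p$ convergence of the approximants into control of $E^{\mu_k}[f_k-g\wedge c]$. The cosmetic differences are harmless: you extract a uniform bound $\ratell^{**}(\mu_k)\le C$ explicitly and then pass $k\to\infty$ at fixed $\lambda$ before sending $\lambda\to\infty$, whereas the paper avoids needing the uniform bound by writing the error as $\beta^{-1}E^{\mu_m}[\beta(g_m-g^c)]\le\beta^{-1}\pres_\ell(\beta(g_m-g^c))+\beta^{-1}\ratell^{**}(\mu_m)$ and sending $\beta\to\infty$ first (the $\beta^{-1}\ratell^{**}(\mu_m)$ term dies for each fixed $m$, and the lattice-animal estimate \eqref{animal1} bounds $\beta^{-1}\pres_\ell(\beta(g_m-g^c))$ by a $\beta$-free quantity). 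Both presentations are valid and hinge on the same idea.

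The gap is in your final paragraph, the $\ratell$ version. You assert that any $\mu$ with $\ratell^{**}(\mu)<\infty$ satisfies $\mu_0\ll\P$, and then invoke \eqref{H4}. That implication is not established and is not generally available: $\ratell^{**}$ is the lower semicontinuous regularization of $\ratell$, and $\ratell$ equals $+\infty$ on $\{\mu:\mu_0\not\ll\P\}$ by fiat; the lsc regularization can perfectly well be finite on that set if there are nearby measures with $\mu_0\ll\P$ and bounded entropy. Indeed, the authors prove $\ratell^{**}(\mu)<\infty\Rightarrow\mu_0\ll\P$ only in a very restricted special case ($\Uset$ a line segment, $\zeta$ an extreme point), and even there it requires a separate argument; they do not claim it in general. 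The correct and shorter route is the one the paper takes: \eqref{H9.1} already gives the variational formula with $\ratell$ (not $\ratell^{**}$) for bounded continuous $f$, so in your choice of $\mu_k$ simply replace $\ratell^{**}(\mu_k)$ by $\ratell(\mu_k)$, invoking \eqref{H9.1} instead of \eqref{H9}; the rest of the lower-bound argument is unchanged since $\ratell\ge\ratell^{**}$ only helps you in the entropy inequality, and the upper bound (your first paragraph) follows trivially from $\ratell\ge\ratell^{**}$ together with \eqref{aux49}.
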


\begin{theopargself}
\begin{proof}
  From \eqref{dual3.1}, 
\[  \pres_\ell(g,\zeta)\ge \pres_\ell(g\wedge c,\zeta)\ge E^\mu[g\wedge c]-\ratell^{**}(\mu).\]
Supremum on the right over $c$ and $\mu$ gives 
\begin{align} \label{aux49}
\pres_\ell(g,\zeta)\ge \sup_{\substack{\mu\in\measures(\bigom_\ell): E^\mu[Z_1]=\zeta\\[2pt]c>0}}
\bigl\{E^\mu[\min(g,c)]-\ratell^{**}(\mu)\bigr\}.
\end{align}

For the other direction, let $c<\infty$ and  abbreviate $g^c=g\wedge c$.   
  Let $g_m\in C_b(\bigom_\ell)$ be a sequence converging to $g^c$ in $L^p(\P)$. 
  Let $\e>0$.  
By \eqref{H9} we can   find $\mu_m$ such that 
$E^{\mu_m}[Z_1]=\zeta$, $\ratell^{**}(\mu_m)<\infty$  and 
\beq \pres_\ell(g_m,\zeta)\le\e+E^{\mu_m}[g_m]-\ratell^{**}(\mu_m). \label{aux49.1}\eeq
 Take $\beta>0$  and  write
	\begin{align*}
	&\pres_\ell(g_m,\zeta)\le \e+E^{\mu_m}[g^c]-\ratell^{**}(\mu_m)+\beta^{-1}E^{\mu_m}[\beta(g_m-g^{c})]\\[4pt] 
	&\le\e+\sup\bigl\{E^\mu[g^c]-\ratell^{**}(\mu):c>0,\ E^\mu[Z_1]=\zeta\bigr\}\\[3pt]
	&\qquad+  \beta^{-1}\pres_\ell\bigl(\beta(g_m-g^{c})\bigr)     +\beta^{-1}\ratell^{**}(\mu_m)
	\\[4pt]
&\le\e+ \text{ [right-hand side of \eqref{var-pt2pt2}] } 	  \\[3pt] 
&\; + \varlimsup_{n\to\infty}\max_{x_k-x_{k-1}\in\range}\,n^{-1}\sum_{k=0}^{n-1}\abs{g_m(T_{x_k}\w, z_{1,\ell})-g^{c}(T_{x_k\w}, z_{1,\ell})}  +\beta^{-1}\ratell^{**}(\mu_m) \\[4pt]
&\le\e+ \text{ [right-hand side of \eqref{var-pt2pt2}] } 	  \\[3pt] 
 &\qquad+C\E \bigl[\; \max_{z_{1,\ell}\in\range^\ell}\abs{g_m-g^{c} }^p\,\bigr]
 +\beta^{-1}\ratell^{**}(\mu_m). 
	\end{align*}  
 The second inequality above used \eqref{dual3},  
 and the last  inequality  used   \eqref{animal1} and Chebyshev's inequality.    
Take first $\beta\to\infty$, then $m\to\infty$,  and last $c\nearrow\infty$ and  $\e\searrow0$.
Combined with \eqref{aux49}, we have 
 arrived at \eqref{var-pt2pt2}.  
 
Dropping $^{**}$   requires no extra work.  Since 
$\ratell\ge\ratell^{**}$, \eqref{aux49} comes for free.  For the complementary
inequality simply replace $\ratell^{**}(\mu_m)$   with $\ratell(\mu_m)$ in \eqref{aux49.1},
as justified by the last line of Theorem \ref{H-thm1}.  
  \qed\end{proof}\end{theopargself}
  


\section{Example: directed polymer in the $L^2$ regime} 
\label{L2-sec} 
We illustrate the variational formula of the previous section with 
  a directed polymer in the $L^2$ regime. 
The maximizing processes are basically the Markov chains constructed 
by Comets and Yoshida \cite{Com-Yos-06}  
and  Yilmaz \cite{Yil-09-aop}. 
   We restrict to  $\zeta\in\ri\Uset$. 
The closer $\zeta$ is to the relative boundary, 
the smaller we need to take   the inverse temperature $\beta$.

The setting is that of Example \ref{ex:dir-iid} with some
 simplifications.   $\Omega=\R^{\Z^{d+1}}$
is a product space indexed by the space-time lattice where $d$ is the
spatial dimension and the last coordinate direction is reserved for time.   
The environment is     $\w=(\w_x)_{x\in\Z^{d+1}}$ and 
 translations are $(T_x\w)_y=\w_{x+y}$.  
The coordinates $\w_x$ are i.i.d.\ under $\P$.    
The set of admissible steps is of the form $\range=\{(z', 1): z'\in\range'\}$ 
for a finite set $\range'\subset\Z^d$.  

To be in the {\sl weak disorder} regime 
we   assume that the difference of two $\range$-walks is 
at least $3$-dimensional.  
Precisely speaking, the additive subgroup
of $\Z^{d+1}$ generated by $\range-\range=\{x-y: x,y\in\range\}$ is
 linearly  isomorphic to some $\Z^m$, and we  
\beq
\text{assume that the dimension $m\ge 3$. } 
\label{irr-ass}\eeq
For example,  $d\ge 3$ and $\range'=\{\pm e_i: 1\le i\le d\}$ given by
simple random walk qualifies.  

The $P$-random walk has a kernel $(p_z)_{z\in\range}$.  Earlier we assumed
$p_z=\abs{\range}^{-1}$, but this is not necessary for the results, any 
fixed kernel will do.  We do assume $p_z>0$ for each $z\in\range$. 
  
The potential is 
$\beta g(\w_0,z)$ where $\beta\in(0,\infty)$ is the inverse temperature parameter.  Assume  
\beq  \E[e^{c\abs{g(\w,z)}}] <\infty \quad\text{ for some $c>0$ and all $z\in\range$.}    \label{st-ass1}\eeq
Now  $\pres_1(\beta g,\cdot\,)$ is well-defined and
continuous on $\Uset$. 

Define an averaged  logarithmic moment generating function
\[ \lambda(\beta,\theta)=\log\sum_{z\in\range} p_z\,\E[e^{\beta g(\w_0,z)+\theta\cdot z}] \quad\text{for $\beta\in[-c,c]$ and $\theta\in\R^{d+1}$.}\]
Under a fixed $\beta$, define the convex dual in the $\theta$-variable  by 
\beq \lambda^*(\beta,\zeta)=\sup_{\theta\in\R^{d+1}}\{\zeta\cdot\theta-\lambda(\beta,\theta)\} ,
\qquad \zeta\in\Uset.  \label{lambda*}\eeq
 For each $\beta\in[-c,c]$ and $\zeta\in\ri\Uset$ there exists $\theta\in\R^{d+1}$ such that 
$\nabla_\theta\lambda(\beta,\theta)=\zeta$ and this $\theta$ maximizes in \eqref{lambda*}.
A point $\eta\in\R^{d+1}$ also maximizes if and only if 
\beq
\text{$(\theta-\eta)\cdot z$ is constant over   $z\in\range$.}  
\label{max7}\eeq
Maximizers cannot be unique now because the last coordinate $\theta_{d+1}$ can 
vary freely without altering the expression in braces in \eqref{lambda*}.  
The spatial part $\theta'=(\theta_1,\dotsc,\theta_d)$ of  a maximizer is unique
 if and only if $\Uset$ has nonempty $d$-dimensional interior.

Extend the random walk distribution $P$ to a two-sided walk $(X_k)_{k\in\Z}$ 
that satisfies $X_0=0$ and $Z_i=X_i-X_{i-1}$ for all $i\in\Z$, where the 
steps $(Z_i)_{i\in\Z}$ are i.i.d.\ $(p_z)$-distributed.  
For $n\in\N$ define forward and backward partition functions 
\[ 
Z_n^+=E\bigl[ e^{\beta\sum_{k=0}^{n-1}g(\w_{X_k},Z_{k+1})+\theta\cdot X_n}]
\ \ \text{and}\ \ 
Z_n^-=E\bigl[ e^{\beta\sum_{k=-n}^{-1}g(\w_{X_k},Z_{k+1})-\theta\cdot X_{-n}}]
\]
and martingales 
$
W_n^\pm=e^{-n\lambda(\beta,\theta)} Z_n^\pm
$
 with
$ \E W_n^\pm = 1 .$ 

Suppose we have   the $L^1$ convergence 
$ 
W_n^\pm\to W_\infty^\pm 
$
 for some $(\beta,\theta)$.  
Then  $\E W_\infty^\pm = 1$, and  by Kolmogorov's 0-1 law 
   $\P(W_\infty^\pm>0)=1$.  Define a probability
measure $\mu^\theta_0$ on $\Omega$ 
by 
\[  \int_\Omega f(\w)\,\mu^\theta_0(d\w) =  \E[  W_\infty^-W_\infty^+ f]. \] 
  Define a stochastic kernel from
$\Omega$ to $\range$ by
\[
q^\theta_0(\w, z)= p_z e^{\beta g(\w_0,z)-\lambda(\beta,\theta)+\theta\cdot z}
\frac{W_\infty^+(T_z\w)}{W_\infty^+(\w)}. 
\]
Property $\sum_{z\in\range} q^\theta_0(\w, z)=1$ comes from (one of) the identities 
\beq
W_\infty^\pm =\sum_{z\in\range} 
p_z e^{\beta g(\w_{a^{(\pm)}},z)-\lambda(\beta,\theta)+\theta\cdot z}
W_\infty^\pm\circ T_{\pm z} \quad \text{$\P$-a.s.} 
\label{W1}\eeq 
where $a^{(+)}=0$ and $a^{(-)}=-z$.  These  
are inherited from the one-step Markov decomposition of $Z_n^\pm$. 
  For  $\ell\ge 0$,  on $\bigom_\ell$  
   define the  probability  
measure $\mu^\theta$   by 
\beq\mu^\theta(d\w, z_{1,\ell})= \mu^\theta_0(d\w) q(\w, z_1)q(T_{x_{1}}\w, z_2)\dotsm q(T_{x_{\ell-1}}\w, z_\ell) \label{mutheta}\eeq
   where $x_j=z_1+\dotsm+z_j$, and the stochastic kernel 
\beq q^\theta((\w, z_{1,\ell}), (T_{z_1}\w, z_{2,\ell}z))= q^\theta_0(T_{x_{\ell}}\w, z) .
\label{qtheta}\eeq   
  We think of $\beta$ fixed and $\theta$ varying and so include only $\theta$ 
in the notation of $\mu^\theta$ and $q^\theta$.   
Identities \eqref{W1}  can be used to  show that   $\mu^\theta$ is invariant 
under the  kernel $q^\theta$, or explicitly, for any bounded measurable test function $f$, 
\beq
\sum_{z_{1,\ell}, z} \int_\Omega \mu^\theta(d\w, z_{1,\ell})  q^\theta_0(T_{x_{\ell}}\w, z)
f(T_{z_1}\w, z_{2,\ell}z)  \;=\;  \int_{\bigom_\ell} f\,d\mu^\theta. 
\label{inv4}\eeq
  By Lemma 4.1 of \cite{Ras-Sep-11} the Markov chain
with transition $q^\theta$ started with $\mu^\theta$ is an ergodic process.
Let us call in general $(\mu,q)$ a measure-kernel pair  if $q$ is a Markov kernel
and $\mu$ is an invariant  probability
measure: $\mu q=\mu$. 

\begin{theorem}  
\label{dir-pol-thm}
Fix a compact subset $\Uset_1$ in the relative interior of $\Uset$.
Then there exists $\beta_0>0$ such that, for $\beta\in(0,\beta_0]$ and 
$\zeta\in\Uset_1$,  we can choose  $\theta\in\R^{d+1}$ such that 
  the following holds.  First $\nabla_\theta\lambda(\beta,\theta)=\zeta$ and $\theta$ is a maximizer
  in \eqref{lambda*}. 
The  martingales $W_n^\pm$ are uniformly integrable and the 
  pair  $(\mu^\theta, q^\theta)$ is well-defined by  \eqref{mutheta}--\eqref{qtheta}. 
  We have 
 \beq   \pres_1(\beta g, \zeta)=-\lambda^*(\beta,\zeta).  
\label{pres7}\eeq 
A measure-kernel pair $(\mu,q)$ on $\bigom_1$ such that $\mu_0\ll\P$  satisfies 
\beq
 \pres_1(\beta g, \zeta)= E^\mu[\beta g] - H(\mu\times q\vert \mu\times \hat p_1)
 \label{var7}\eeq
 if and only if $(\mu,q)=(\mu^\theta, q^\theta)$.  
\end{theorem}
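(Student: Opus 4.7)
The plan has four stages. First, the choice of $\theta$: the map $\theta\mapsto\lambda(\beta,\theta)$ is smooth and convex on $\R^{d+1}$, and $\nabla_\theta\lambda(\beta,\cdot)$ parametrizes $\ri\Uset$ modulo the affine degeneracy in \eqref{max7}. Standard convex analysis produces for each $(\beta,\zeta)\in(0,c]\times\Uset_1$ a maximizer $\theta=\theta(\beta,\zeta)$ in \eqref{lambda*} with $\nabla_\theta\lambda(\beta,\theta)=\zeta$, and these maximizers can be chosen from a common compact set $\Theta_1\subset\R^{d+1}$ uniform in $\beta$ and $\zeta$.

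The hardest step is the $L^2$-bound on $W_n^\pm$, uniform for $\theta\in\Theta_1$. A standard two-copy computation, combined with a change of measure to the $\theta$-tilted law $\tilde p_z\propto p_z\E[e^{\beta g(\w_0,z)}]e^{\theta\cdot z}$, rewrites
\[\E[(W_n^+)^2]=\tilde E^{\otimes 2}\Bigl[\,\prod_{k=0}^{n-1}\Phi(Z_{k+1}^{(1)},Z_{k+1}^{(2)})^{\one\{\Delta_k=0\}}\,\Bigr],\]
where $\Phi(u,v)=\E[e^{\beta g(\w_0,u)+\beta g(\w_0,v)}]/\bigl(\E[e^{\beta g(\w_0,u)}]\E[e^{\beta g(\w_0,v)}]\bigr)\le e^{C\beta^2}$ and $\Delta_k=X_k^{(1)}-X_k^{(2)}$ is a random walk on the group generated by $\range-\range\cong\Z^m$ which, by \eqref{irr-ass}, is transient. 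Taking $\beta_0$ small enough that $C\beta_0^2$ times the Green's function of $\Delta$ under $\tilde p^{\otimes 2}$ keeps the right side bounded in $n$, uniformly in $\theta\in\Theta_1$, yields $\sup_n\E[(W_n^\pm)^2]<\infty$. Hence $W_n^\pm\to W_\infty^\pm$ in $L^2$, $W_\infty^\pm>0$ a.s.\ by Kolmogorov's 0-1 law, and the pair $(\mu^\theta,q^\theta)$ is well-defined by \eqref{mutheta}--\eqref{qtheta}. A secondary technical point is $\log W_\infty^\pm\in L^1(\P,\mu^\theta_0)$, needed for the telescoping below; this follows from the $L^2$-bound and the exponential moment \eqref{st-ass1}.

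For \eqref{pres7}, the cocycle identity
\[\log\frac{q^\theta_0(\w,z)}{p_z}=\beta g(\w_0,z)+\theta\cdot z-\lambda(\beta,\theta)+\log W_\infty^+(T_z\w)-\log W_\infty^+(\w),\]
combined with the $\mu^\theta$-invariance of $q^\theta$ (so that the $\log W_\infty^+$ telescoping vanishes after integration) and the identity $E^{\mu^\theta}[Z_1]=\nabla_\theta\lambda(\beta,\theta)=\zeta$ (obtained by differentiating \eqref{W1} in $\theta$), gives
\[H(\mu^\theta\times q^\theta\mid\mu^\theta\times\hat p_1)-E^{\mu^\theta}[\beta g]=\theta\cdot\zeta-\lambda(\beta,\theta)=\lambda^*(\beta,\zeta).\]
Truncating $g$ to $g\wedge c$, justified by \eqref{st-ass1}, and invoking \eqref{var-pt2pt2} yields $\pres_1(\beta g,\zeta)\ge -\lambda^*(\beta,\zeta)$. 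For the reverse bound, Jensen's inequality gives $\pres_1(\beta g+t\cdot z_1)\le\lambda(\beta,t)$ for every $t$, so \eqref{pres19} together with continuity of $\pres_1(\beta g,\cdot)$ on $\ri\Uset$ (Theorem \ref{th:regular}) yields
\[-\pres_1(\beta g,\zeta)=\sup_t\{\zeta\cdot t-\pres_1(\beta g+t\cdot z_1)\}\ge\sup_t\{\zeta\cdot t-\lambda(\beta,t)\}=\lambda^*(\beta,\zeta).\]

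Finally, for uniqueness: given $(\mu,q)$ with $\mu_0\ll\P$, $\mu q=\mu$, and attaining the supremum in \eqref{var7}, the variational formula \eqref{var-pt2pt2} forces $E^\mu[Z_1]=\zeta$, and the same telescoping as above computes
\[H(\mu\times q\mid\mu\times q^\theta)=H(\mu\times q\mid\mu\times\hat p_1)-E^\mu[\beta g]-\theta\cdot\zeta+\lambda(\beta,\theta)=-\pres_1(\beta g,\zeta)-\lambda^*(\beta,\zeta)=0.\]
Hence $\mu\times q=\mu\times q^\theta$, so $q=q^\theta$ $\mu_0$-a.s. Then $\mu_0$ is an invariant probability of the environment Markov chain driven by $q^\theta$ with density with respect to $\P$, and ergodicity of this chain under $\mu^\theta$ (Lemma 4.1 of \cite{Ras-Sep-11}) together with uniqueness of absolutely continuous invariant measures in the $L^2$-weak-disorder regime (in the spirit of \cite{Com-Yos-06}) forces $\mu=\mu^\theta$. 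The main obstacle, as noted, is the uniform-in-$\theta$ $L^2$ bound: as $\Uset_1$ moves towards the relative boundary of $\Uset$, the tilted walk concentrates in fewer effective directions, the Green's function of $\Delta$ grows, and $\beta_0$ must shrink accordingly.
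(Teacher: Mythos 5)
The paper itself omits the proof (deferring to the preprint version), but your route---the $L^2$ weak-disorder martingales $W^\pm_n$ \`a la Comets--Yoshida/Y\i lmaz, the cocycle identity for $q^\theta_0$, the telescoping relative-entropy computation for one inequality, Jensen and \eqref{pres19} for the other, and the entropy-zero argument for uniqueness---is exactly the intended one; the paper even signals it by saying ``the maximizing processes are basically the Markov chains constructed by Comets and Yoshida and Y\i lmaz.'' The $L^2$ two-replica bound, its reduction to a Green's-function estimate for the difference walk on $\Z^m$ with $m\ge 3$, and the uniform-in-$\theta$ choice of $\beta_0$ are all as they should be.

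Two places deserve more care. First, in the uniqueness step you invoke ``the same telescoping as above,'' but the cancellation
$E^{\mu\times q}\bigl[\log W_\infty^+(\mathrm{second})-\log W_\infty^+(\mathrm{first})\bigr]=0$
requires $\log W_\infty^+\circ T_{z_1}\in L^1(\mu)$, and you only justified $\log W_\infty^\pm\in L^1(\mu^\theta_0)$. For an arbitrary maximizer $\mu$ one only knows $\mu_0\ll\P$, so this integrability is not automatic. It can be patched by truncating $F=\log W_\infty^+\circ T_{z_1}$ at level $N$, noting the telescoped term vanishes at each $N$ by $\mu q=\mu$, and then passing to the limit using $H(\mu\times q\mid\mu\times p)<\infty$ (which controls the tail that would otherwise ruin the interchange), but this needs to be said. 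Second, the passage from $q=q^\theta$ $\mu$-a.e.\ to $\mu=\mu^\theta$ is a bit compressed: the point is that $W_\infty^\pm>0$ a.s.\ makes $\mu^\theta_0$ \emph{equivalent} to $\P$, so $\mu_0\ll\P$ forces $\mu_0\ll\mu^\theta_0$; since distinct ergodic invariant measures of the $q^\theta$-chain are mutually singular and $\mu^\theta$ is ergodic (Lemma~4.1 of \cite{Ras-Sep-11}), $\mu=\mu^\theta$. Spelling out these two points closes the argument.
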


\begin{remark} Note that even though  $\nabla_\theta\lambda(\beta,\theta)=\zeta$ does not
pick a unique $\theta$,  by \eqref{max7}  replacing $\theta$  by another maximizer
does not change the martingales 
$W_n^\pm$ or  the pair  $(\mu^\theta, q^\theta)$.  Thus $\zeta$
determines   $(\mu^\theta, q^\theta)$   uniquely.   
\end{remark}

We omit the proof of Theorem \ref{dir-pol-thm}.  Details appear in the preprint  \cite{Ras-Sep-12-arxiv}.

\appendix
\section{A convex analysis lemma}

\begin{lemma}  Let $\mathcal I$ be a finite subset of $\R^d$ and $\zeta\in\conv\mathcal I$.
Suppose $\zeta=\sum_{z\in\mathcal I}\beta_zz$ with each $\beta_z>0$ and 
$\sum_{z\in\mathcal I}\beta_z=1$.   Let $\xi_n\in\conv\mathcal I$ be a sequence 
such that $\xi_n\to\zeta$.  Then  
 there exist  coefficients $\alpha^{n}_z\ge 0$  such that $\sum_{z\in\mathcal I}\alpha^{n}_z=1$,  
$\xi_{n}=\sum_{z\in\mathcal I}\alpha^n_zz$   and
for each $z\in\mathcal I$, $\alpha^n_z\to \beta_z$ as $n\to\infty$. 

Furthermore,  assume $\mathcal I\subset\Q^d$ and $\xi_n\in\Q^d$.   Then the 
coefficients $\alpha^n_z$ can be taken rational.  
 \label{lm:co1}\end{lemma}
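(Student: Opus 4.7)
The plan is to reformulate the problem through a linear map. Let $L:\R^{\mathcal I}\to\R^d\times\R$ be defined by $L(\alpha)=\bigl(\sum_{z\in\mathcal I}\alpha_z z,\,\sum_{z\in\mathcal I}\alpha_z\bigr)$, so that the nonnegative-coefficient unit-sum representations of a point $\xi\in\R^d$ are encoded by the set $A_\xi:=L^{-1}(\xi,1)\cap\R_{\ge 0}^{\mathcal I}$. The given $\beta$ lies in $A_\zeta$ and, crucially, in the interior of the positive orthant since all $\beta_z>0$; this will supply the room to perturb.

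I would fix a linear right inverse $s:\mathrm{im}\,L\to\R^{\mathcal I}$ of $L$, obtained by restricting $L$ to any chosen linear complement of $\ker L$ and inverting. Both $(\xi_n,1)$ and $(\zeta,1)$ lie in $\mathrm{im}\,L$ because $\xi_n,\zeta\in\conv\mathcal I$, so
\[
\alpha^n:=\beta+s\bigl((\xi_n-\zeta,\,0)\bigr)
\]
is a well-defined vector in $\R^{\mathcal I}$ satisfying $L(\alpha^n)=L(\beta)+(\xi_n-\zeta,0)=(\xi_n,1)$, i.e.\ $\sum_z\alpha_z^n z=\xi_n$ and $\sum_z\alpha_z^n=1$. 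Continuity of $s$ and $\xi_n\to\zeta$ force $\alpha^n\to\beta$ componentwise, and the strict positivity $\beta_z>0$ then guarantees $\alpha_z^n\ge 0$ for every $z$ once $n$ is large; the finitely many initial values can be set to any convex representation of $\xi_n$, which exists since $\xi_n\in\conv\mathcal I$.

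For the rational refinement, assume $\mathcal I\subset\Q^d$ and $\xi_n\in\Q^d$. Then $L$ has rational matrix entries, and the consistent linear system $L(\alpha)=(\xi_n,1)$ admits a rational solution. Since $L^{-1}(\xi_n,1)$ is a translate of $\ker L$ and $\ker L\cap\Q^{\mathcal I}$ is dense in $\ker L$, rational points are dense in $L^{-1}(\xi_n,1)$. For each $n$ large enough that $\alpha_z^n\ge\tfrac12\min_z\beta_z$ for all $z$, I would select a rational $\widetilde\alpha^n\in L^{-1}(\xi_n,1)$ within distance $1/n$ of $\alpha^n$; nonnegativity is then preserved, and $\widetilde\alpha^n\to\beta$ coordinatewise.

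The one nontrivial step is the verification that $\alpha^n\ge 0$, which is exactly where the hypothesis $\beta_z>0$ is used and is the only role it plays. Everything else is formal linear algebra, so I expect no genuine obstacle beyond bookkeeping.
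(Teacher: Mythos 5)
Your argument is correct and takes a mildly different route from the paper. The paper first invokes Carath\'eodory's theorem to pass to a subsequence along which the $\xi_n$ lie in $\conv\mathcal I_0$ for a fixed affinely independent $\mathcal I_0\subset\mathcal I$ spanning the same affine hull, then writes everything in barycentric coordinates relative to $\mathcal I_0$ and shifts by a fixed element of $\ker A$ to recover coefficients indexed by all of $\mathcal I$. Your choice of an arbitrary linear right inverse $s$ of $L$ is, in effect, an abstract version of that barycentric construction, but it bypasses the Carath\'eodory/pigeonhole step and the subsequence bookkeeping entirely; it also makes transparent that the only role of $\beta_z>0$ is to absorb the perturbation $s(\xi_n-\zeta,0)\to 0$. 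The rational refinement is likewise parallel: the paper appeals to two cited lemmas for rationality of barycentric coordinates and density of rational vectors in $\ker A$, while you observe directly that a consistent rational linear system has a rational particular solution and that rational points are then dense in the affine solution set $L^{-1}(\xi_n,1)$, which is cleaner and self-contained. One small omission: in the rational case you should also note that the finitely many small-$n$ exceptions admit rational convex representations of $\xi_n$ (this follows, e.g., by applying your own density observation to any solution in $A_{\xi_n}$ with a coordinate bounded away from the boundary, or by the lemma the paper cites), so that the rationality claim holds for the entire sequence.
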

 
\begin{proof}
First we reduce the proof to the case where there exists a subset $\mathcal I_0\subset
\mathcal I$ such that $\mathcal I_0$ is affinely independent and generates   the same affine hull 
as $\mathcal I$, and $\xi_n\in\conv\mathcal I_0$ for all $n$.   To justify this reduction, note 
that there are finitely many such sets $\mathcal I_0$, and each $\xi_n$ must lie in the
convex hull of some $\mathcal I_0$ (Carath\'eodory's Theorem \cite[Theorem~17.1]{Roc-70}
applied to the affine hull of $\mathcal I$). 
All  but finitely many
of the $\xi_n$'s are contained in   subsequences that lie 
in a particular $\conv\mathcal I_0$. 
The coefficients of the finitely many remaining $\xi_n$'s are irrelevant for  the claim made
in the lemma.  

After the above reduction, the limit $\xi_n\to\zeta$ forces $\zeta\in\conv\mathcal I_0$.  
  The points $\tilde z\in\mathcal I\smallsetminus\mathcal I_0$ lie in the affine hull of 
$\mathcal I_0$ and hence have barycentric coordinates:  
\[  \gamma_{z,\tilde z}\in\R\,, \quad  \tilde z=\sum_{z\in\mathcal I_0}\gamma_{z,\tilde z}z\,, 
\quad  \sum_{z\in\mathcal I_0}\gamma_{z,\tilde z}=1   
\quad \text{ for } \  \tilde z\in\mathcal I\smallsetminus\mathcal I_0. \]
Consequently 
\beq\begin{aligned}
\zeta=\sum_{z\in\mathcal I}\beta_zz 
= \sum_{z\in\mathcal I_0} \Bigl( \beta_z 
+ \sum_{\tilde z\in\mathcal I\smallsetminus\mathcal I_0}\gamma_{z,\tilde z}\beta_{\tilde z}   \Bigr)z
\equiv  \sum_{z\in \mathcal I_0}\bar\beta_{z} z
\end{aligned}\label{aa1}\eeq
where the last identity defines the unique barycentric coordinates $\bar\beta_{z}$
 of $\zeta$ relative to $\mathcal I_0$.  Define the $\mathcal I_0\times \mathcal I$
 matrix $A=
\bigl[ \;  I  \; \big\vert \; \{\gamma_{z,\tilde z} \} \;\bigr]$   where $I$ is the 
$\mathcal I_0\times \mathcal I_0$ identity matrix and $(z, \tilde z)$ ranges over
$\mathcal I_0\times (\mathcal I\smallsetminus\mathcal I_0)$.   Then 
\eqref{aa1} is the identity  $A\beta=\bar\beta$ for the (column)  vectors
$\beta=(\beta_z)_{z\in\mathcal I}$ and $\bar\beta=(\bar\beta_z)_{z\in\mathcal I_0}$.  
Since $\eta=[\bar\beta \; 0]^t$  is also a   solution of $A\eta=\bar\beta$, we can
write $\beta= [\bar\beta \; 0]^t + y$ with $y\in\ker A$. 

Let $\xi_n= \sum_{z\in \mathcal I_0}\bar\alpha^n_{z} z$ define the barycentric
coordinates of  $\xi_n$.  Since the coordinates are unique, 
$\xi_n\to\zeta$ forces $\bar\alpha^n\to\bar\beta$. 
Let $\alpha^n= [ \bar\alpha^n \; 0]^t + y$.   Then
$A\alpha^n=\bar\alpha^n$ which says that $\xi_{n}=\sum_{z\in\mathcal I}\alpha^n_zz$. 
Also $\alpha^n\to\beta$.   
Since $\beta_z>0$, inequality  $\alpha^n_z\ge 0$ fails at most  finitely many times, 
and  for finitely many $\xi_n$ we can replace the $\alpha^n_z$'s with any  
  coefficients  that exist by $\xi_n\in\conv\mathcal I$.  Lastly, for 
$\sum_{z\in\mathcal I} \alpha^n_z=1$ we need $\sum_{z\in\mathcal I}y_z=0$.  
This comes from $Ay=0$ because the column sums of $A$ are all $1$. 
This completes the proof of the first part of the lemma. 

Assume now that  $\mathcal I\subset\Q^d$ and $\xi_n\in\Q^d$. Then by Lemma 
A.1.\ in \cite{Ras-Sep-Yil-12-} the vector  $\bar\alpha^n$ is rational.  By Lemma A.2.\ in 
\cite{Ras-Sep-07}
we can find rational vectors $y^n\in\kernel A$ such that  $y^n\to y$.  This time
take  $\alpha^n= [ \bar\alpha^n \; 0]^t + y^n$. 
\qed\end{proof}

\section{A concentration inequality}

We state a concentration inequality for the case of a bounded potential $g$.  It comes
from the ideas of Liu and Watbled \cite{Liu-Wat-09}, in the form given by
Comets and Yoshida \cite{Com-Yos-11-}.   

\begin{lemma}   Let $\P$ be an i.i.d.\ product measure on a product space $\Omega=\Gamma^{\Z^d}$ with generic elements $\w=(\w_x)_{x\in\Z^d}$. 
Let $g:\bigom_\ell\to\R$ be a bounded measurable  function such that,  for each $z_{1,\ell}\in\range^\ell$, 
  $g(\cdot\,,z_{1,\ell}) $ is a local function of $\w$.   
 Let  
$\zeta\in\Uset$ and 
\beq
F_n(\w)=\log E\big[e^{\sum_{k=0}^{n-1}g(T_{X_k}\w,Z_{k+1,k+\ell})}\one\{X_n= \xhat_n(\zeta)\}\big]. 
\label{Fn}\eeq
 Let $\Uset_0$ be a face of $\Uset$ such that $\zeta\in\Uset_0$,
 and assume that  $0\not\in\Uset_0$. 

Then there exist   constants $B, c\in(0,\infty)$ such that, for all $n\in\N$ and  $\e\in(0, c)$,  
\beq
\P\{ \w:  \abs{F_n(\w)-n\pres_\ell(g,\zeta)} \ge n\e\}  \le 2e^{-B\e^2 n}. 
\label{Fn2}\eeq
\label{lm:conc}\end{lemma}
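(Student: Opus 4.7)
The plan is a Doob-martingale concentration of $F_n$ around $\E F_n$, followed by a replacement of $\E F_n$ by $n\pres_\ell(g,\zeta)$ using the almost sure limit from Theorem \ref{th:pt2pt}. Since each $g(\cdot,z_{1,\ell})$ is local, I fix a finite $\mathcal N\subset\Z^d$ such that $g(\cdot,z_{1,\ell})$ depends only on $\{\w_y:y\in\mathcal N\}$ for every $z_{1,\ell}\in\range^\ell$. Then $F_n(\w)$ is determined by $\w$ restricted to the finite set $\Lambda_n=\bigcup(X_k+\mathcal N)$, where the union runs over admissible paths from $0$ to $\xhat_n(\zeta)$ and over $0\le k\le n+\ell-1$.

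The crucial geometric input is Lemma \ref{lm:animal1}: since $0\notin\Uset_0\ni\zeta$, every admissible $n$-path from $0$ to $\xhat_n(\zeta)\in n\Uset_0$ is self-avoiding. Consequently, resampling a single coordinate $\w_y$ alters at most $\abs{\mathcal N}$ terms in $\sum_{k=0}^{n-1}g(T_{X_k}\w,Z_{k+1,k+\ell})$ along any fixed path, yielding the uniform bound $\abs{F_n(\w)-F_n(\w')}\le K:=2\abs{\mathcal N}\,\norm{g}_\infty$ whenever $\w,\w'$ agree off one site. Enumerating $\Lambda_n=\{y_1,\dots,y_N\}$ and setting $\mathcal F_j=\sigma(\w_{y_1},\dots,\w_{y_j})$, the Doob martingale $M_j=\E[F_n\mid\mathcal F_j]$ has bounded increments $\abs{M_j-M_{j-1}}\le K$.

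The naive Azuma--Hoeffding bound $2\exp(-n^2\e^2/(2NK^2))$ is inadequate since $N=\abs{\Lambda_n}$ can grow like $n^d$. This is where the Liu--Watbled/Comets--Yoshida refinement enters, and it is the main technical obstacle. One expresses the $j$-th increment through an independent resampling of $\w_{y_j}$ and uses the Gibbs representation of $F_n$ to control $\abs{M_j-M_{j-1}}$ pointwise by a multiple of the quenched polymer probability $Q_n^{g,\w}\{(X_{0,n-1}+\mathcal N)\ni y_j\}$. Self-avoidance forces $\sum_{j=1}^N Q_n^{g,\w}\{(X_{0,n-1}+\mathcal N)\ni y_j\}\le\abs{\mathcal N}(n+\ell)$ pointwise in $\w$, so the total quadratic variation satisfies $\sum_j (M_j-M_{j-1})^2\le CKn$ almost surely. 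A standard exponential inequality for bounded-difference martingales with quadratic-variation control (a Bernstein/Freedman-type bound) then produces $\P\{\abs{F_n-\E F_n}\ge n\e\}\le 2e^{-Bn\e^2}$ for all $\e$ in some interval $(0,c_0]$, with $B$ depending only on $\norm{g}_\infty$, $\abs{\mathcal N}$, and $\abs{\range}$.

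To finish, boundedness of $g$ gives $\abs{F_n}\le n\norm{g}_\infty$, so dominated convergence applied to the a.s.\ limit in Theorem \ref{th:pt2pt}(a) yields $\E F_n/n\to\pres_\ell(g,\zeta)$, hence $\abs{\E F_n-n\pres_\ell(g,\zeta)}\le n\e/2$ for all $n\ge n_0(\e)$. For $n<n_0(\e)$ the estimate \eqref{Fn2} is made trivial by shrinking $B$ and $c$ so that $2e^{-Bc^2 n_0(c)}\ge 1$. Combining the two regimes via the triangle inequality and absorbing constants into $B$ yields \eqref{Fn2}. The delicate step will be the $L^\infty$ control of the quadratic variation, which relies precisely on self-avoidance and on the identity that the $Q_n^{g,\w}$-expected number of sites visited is exactly $n+\ell$.
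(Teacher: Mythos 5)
Your approach differs from the paper's in a way that matters. You set up a Doob martingale indexed \emph{site by site} over the $O(n^d)$ lattice points on which $F_n$ depends, recognize that naive Azuma then fails, and try to repair this with a Freedman/Bernstein-type inequality controlled by the quadratic variation. The paper instead exploits the strict directedness $0\notin\Uset_0$ from the start: it picks $\uhat$ with $\uhat\cdot z\ge 1$ for $z\in\range_0$, slices $\Z^d$ into slabs $\{x:\,(j-1)r_0< x\cdot\uhat\le jr_0\}$, and filters by slab index $j=1,\dots,n_0$ with $n_0=O(n)$. Because there are only $O(n)$ increments, no quadratic-variation refinement is needed; the paper then verifies the hypotheses of Lemma A.1 of \cite{Com-Yos-11-}, namely a uniform conditional exponential moment bound $\E[e^{t\abs{F_n-G_j}}\mid\cH_{j-1}]\le b$, which follows immediately from boundedness of $g$ and the fact that any admissible path spends at most $3r_0$ steps in the region where $F_n$ and $G_j$ differ. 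Your route buys nothing in generality here and is considerably harder to make rigorous.

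There is also a genuine gap in your argument, at the step you yourself flag as delicate. The bound $\sum_j Q_n^{g,\w}\{(X_{0,n-1}+\mathcal N)\ni y_j\}\le\abs{\mathcal N}(n+\ell)$ is a pointwise inequality on polymer probabilities, but $M_j-M_{j-1}$ is a \emph{conditional expectation} with respect to a filtration that changes with $j$, and the resampling representation bounds $\abs{F_n-F_n^{(j)}}$ in terms of both $Q_n^{g,\w}$ and $Q_n^{g,\w^{(j)}}$. You cannot pass from the pointwise bound on $\sum_j Q_n^{g,\w}\{\cdot\}$ to an $L^\infty$ bound on $\sum_j(M_j-M_{j-1})^2$ without an argument handling the conditional expectation and the two environments; as written this is an assertion, not a proof. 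Moreover, the Freedman-type inequality you invoke needs the \emph{predictable} quadratic variation $\sum_j\E[(M_j-M_{j-1})^2\mid\mathcal F_{j-1}]$, not the pathwise one, which creates yet another layer to justify. These are exactly the obstructions the paper's slab decomposition sidesteps. Your final step (dominated convergence from Theorem \ref{th:pt2pt}(a) to replace $\E F_n$ by $n\pres_\ell(g,\zeta)$, and the small-$n$ absorption) is fine and matches the paper's.
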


\begin{proof}  Since $n^{-1}\E F_n\to \pres_\ell(g,\zeta)$, we can prove instead
\beq
\P\{ \w:  \abs{F_n(\w)- \E F_n } \ge n\e\}  \le 2e^{-B\e^2 n}. 
\label{Fn3}\eeq
As before, with $\range_0=\range\cap\Uset_0$ we have $\Uset_0=\conv\range_0$, 
any admissible path $x_{0,n}$ with $x_n=\xhat_n(\zeta)$ uses only $\range_0$-steps,
and from $0\not\in\Uset_0$ follows the existence of $\uhat\in\Z^d$ such that 
$\uhat\cdot z\ge 1$ for all $z\in\range_0$.   
Set   $M_0=\max_{z\in\range_0} \uhat\cdot z$. 

Fix $r_0\in\N$ so that $g(\w, z_{1,\ell})$ depends on $\w$ only through
$\{\w_x:  \abs{x\cdot\uhat}<r_0\}$.  Let  $n_0\in\N$ be such  that 
$n_0r_0\ge M_0n+r_0$.  
On $\Omega$ define the filtration  $\cH_0=\{\emptyset, \Omega\}$,
 $\cH_j=\sigma\{\w_x:  x\cdot\uhat\le j r_0\}$ for $1\le j \le n_0$. 
Since $x_n\cdot\uhat\le M_0n$,  $F_n$ is  $\cH_{n_0}$-measurable.   

To apply Lemma A.1 of \cite{Com-Yos-11-}  we need to find $G_1,\dotsc, G_{n_0}\in L^1(\P)$ 
such that  
\beq  \E[G_j\vert\cH_{j-1}]=\E[G_j\vert\cH_{j}]  \label{Fn7}\eeq
and   
\beq   \E[e^{t\abs{F_n-G_j}} \,\vert\, \cH_{j-1}] \le  b   \label{Fn8}\eeq
for constants  $t,b \in(0,\infty)$ and all $1\le j\le n_0$.  

For the background random walk define stopping times 
\[  \rho_j=\inf\{k\ge 0:  x_k\cdot\uhat \ge (j-2)r_0\}  \]
  and  
\[ \sigma_j=\inf\{k\ge 0:  x_k\cdot\uhat \ge (j+1)r_0\} . \]
Abbreviate  $\varphi(x)=\one\{x= \xhat_n(\zeta)\}$.  
For $1\le j\le n_0$  put
\[  W_j= \exp\Bigl\{  \sum_{\substack{k:\, 0\le k< n\wedge\rho_j \\ \ \ \; n\wedge\sigma_j\le k<n }}  g(T_{x_k}\w,z_{k+1,k+\ell}) \Bigr\} \]
and 
\[   G_j(\w)=\log E[ W_j\,\varphi(X_n) ]. \]
Then $W_j$ does not depend on $\{\w_x:  (j-1)r_0\le {x\cdot\uhat}\le jr_0\}$ and
consequently \eqref{Fn7} holds by the independence of the $\{\w_x\}$. 

Let $t\in\R\smallsetminus(0,1)$.   By Jensen's inequality, 
\begin{align*}
e^{t(F_n-G_j)}\; &=\; \biggl( \frac{E[ W_j\,e^{\sum_{k=n\wedge\rho_j}^{n\wedge\sigma_j-1}g(T_{X_k}\w,Z_{k+1,k+\ell})} \,\varphi(X_n) ]}{E[ W_j\,\varphi(X_n) ]} \biggr)^t \\
&\le  \;  \frac{E[ W_j\,e^{t\sum_{k=n\wedge\rho_j}^{n\wedge\sigma_j-1}g(T_{X_k}\w,Z_{k+1,k+\ell})} \,\varphi(X_n) ]}{E[ W_j\,\varphi(X_n) ]} \\
&\le \; \frac{E[ W_j\,e^{C\abs{t} (\sigma_j-\rho_j)} \,\varphi(X_n) ]}{E[ W_j\,\varphi(X_n) ]}
\; \le \; e^{C\abs{t}} 
\end{align*} 
since $g$ is bounded and  $\sigma_j-\rho_j\le 3r_0$.   This implies
\eqref{Fn8} since $t$ can be taken of either sign.  

Lemma A.1 of \cite{Com-Yos-11-} now gives \eqref{Fn2}.   Note that parameter $n$
in  Lemma A.1 of \cite{Com-Yos-11-}  is actually our $n_0$.  But the ratio $n/n_0$ is bounded
and bounded away from zero so this discrepancy does not harm \eqref{Fn3}. 
\qed\end{proof}

\begin{acknowledgements}
F.\ Rassoul-Agha's work was partially supported by NSF Grant DMS-0747758. T.\ Sepp\"al\"ainen's work was partially supported
by NSF Grant DMS-1003651 and by the Wisconsin Alumni Research Foundation. The authors thank the anonymous referee for valuable comments that improved the
presentation. 
\end{acknowledgements}

\bibliographystyle{spmpsci}      
\bibliography{firasbib2010}   

\begin{thebibliography}{10}
\providecommand{\url}[1]{{#1}}
\providecommand{\urlprefix}{URL }
\expandafter\ifx\csname urlstyle\endcsname\relax
  \providecommand{\doi}[1]{DOI~\discretionary{}{}{}#1}\else
  \providecommand{\doi}{DOI~\discretionary{}{}{}\begingroup
  \urlstyle{rm}\Url}\fi

\bibitem{Arm-Sou-12}
Armstrong, S.N., Souganidis, P.E.: Stochastic homogenization of
  {H}amilton-{J}acobi and degenerate {B}ellman equations in unbounded
  environments.
\newblock J. Math. Pures Appl. (9) \textbf{97}(5), 460--504 (2012)

\bibitem{Bol-Szn-02-dmv}
Bolthausen, E., Sznitman, A.S.: Ten lectures on random media, \emph{DMV
  Seminar}, vol.~32.
\newblock Birkh{\"a}user Verlag, Basel (2002)

\bibitem{Bor-Cor-Rem-12-}
Borodin, A., Corwin, I., Remenik, D.: Log-gamma polymer free energy
  fluctuations via a fredholm determinant identity.
\newblock Comm. Math. Phys.  (2013).
\newblock To appear

\bibitem{Cam-etal-12-}
Campos, D., Drewitz, A., Ramirez, A.F., Rassoul-Agha, F., Seppalainen, T.:
  Level 1 quenched large deviation principle for random walk in dynamic random
  environment.
\newblock Bull. Inst. Math. Acad. Sin.  (2013).
\newblock Special Issue in honor of the 70th birthday of Raghu Varadhan. To
  appear

\bibitem{Car-Hu-04}
Carmona, P., Hu, Y.: Fluctuation exponents and large deviations for directed
  polymers in a random environment.
\newblock Stochastic Process. Appl. \textbf{112}(2), 285--308 (2004)

\bibitem{Com-Gan-Zei-00}
Comets, F., Gantert, N., Zeitouni, O.: Quenched, annealed and functional large
  deviations for one-dimensional random walk in random environment.
\newblock Probab. Theory Related Fields \textbf{118}(1), 65--114 (2000)

\bibitem{Com-Shi-Yos-04}
Comets, F., Shiga, T., Yoshida, N.: Probabilistic analysis of directed polymers
  in a random environment: a review.
\newblock In: Stochastic analysis on large scale interacting systems,
  \emph{Adv. Stud. Pure Math.}, vol.~39, pp. 115--142. Math. Soc. Japan, Tokyo
  (2004)

\bibitem{Com-Yos-06}
Comets, F., Yoshida, N.: Directed polymers in random environment are diffusive
  at weak disorder.
\newblock Ann. Probab. \textbf{34}(5), 1746--1770 (2006)

\bibitem{Com-Yos-11-}
Comets, F., Yoshida, N.: Branching random walks in space--time random
  environment: Survival probability, global and local growth rates.
\newblock Journal of Theoretical Probability \textbf{24}, 657--687 (2011)

\bibitem{Cor-11-}
Corwin, I.: The {Kardar-Parisi-Zhang} equation and universality class  (2011).
\newblock \urlprefix\url{http://arxiv.org/abs/1106.1596}.
\newblock Preprint

\bibitem{Cor-etal-11-}
Corwin, I., O'Connell, N., Sepp{\"a}l{\"a}inen, T., Zygouras, N.: Tropical
  combinatorics and {Whittaker} functions  (2011).
\newblock \urlprefix\url{http://arxiv.org/abs/1110.3489}.
\newblock Preprint

\bibitem{Cox-Gan-Gri-93}
Cox, J.T., Gandolfi, A., Griffin, P.S., Kesten, H.: Greedy lattice animals.
  {I}. {U}pper bounds.
\newblock Ann. Appl. Probab. \textbf{3}(4), 1151--1169 (1993)

\bibitem{Eke-Tem-99}
Ekeland, I., T{{\'e}}mam, R.: Convex analysis and variational problems,
  \emph{Classics in Applied Mathematics}, vol.~28, english edn.
\newblock Society for Industrial and Applied Mathematics (SIAM), Philadelphia,
  PA (1999).
\newblock Translated from the French

\bibitem{Gan-Kes-94}
Gandolfi, A., Kesten, H.: Greedy lattice animals. {II}. {L}inear growth.
\newblock Ann. Appl. Probab. \textbf{4}(1), 76--107 (1994)

\bibitem{Geo-11}
Georgiou, N.: Positive and {Z}ero {T}emperature {P}olymer {M}odels.
\newblock ProQuest LLC, Ann Arbor, MI (2011).
\newblock \urlprefix\url{http://search.proquest.com/docview/913379015}.
\newblock Thesis (Ph.D.)--The University of Wisconsin - Madison

\bibitem{Geo-Sep-12-}
Georgiou, N., Sepp\"al\"ainen, T.: Large deviation rate functions for the
  partition function in a log-gamma distributed random potential.
\newblock Ann. Probab.  (2012).
\newblock To appear.

\bibitem{Gre-Hol-94}
Greven, A., den Hollander, F.: Large deviations for a random walk in random
  environment.
\newblock Ann. Probab. \textbf{22}(3), 1381--1428 (1994)

\bibitem{Hol-09}
den Hollander, F.: Random polymers, \emph{Lecture Notes in Mathematics}, vol.
  1974.
\newblock Springer-Verlag, Berlin (2009).
\newblock Lectures from the 37th Probability Summer School held in Saint-Flour,
  2007

\bibitem{Hus-Hen-85}
Huse, D.A., Henley, C.L.: Pinning and roughening of domain walls in ising
  systems due to random impurities.
\newblock Phys. Rev. Lett. \textbf{54}(25), 2708--2711 (1985)

\bibitem{Iof-Vel-12-}
Ioffe, D., Velenik, Y.: Stretched polymers in random environment.
\newblock In: Probability in Complex Physical Systems, \emph{Springer
  Proceedings in Mathematics}, vol.~11, pp. 339--369 (2012)

\bibitem{Kas-94}
Kassay, G.: A simple proof for {K}{\"o}nig's minimax theorem.
\newblock Acta Math. Hungar. \textbf{63}(4), 371--374 (1994)

\bibitem{Kos-Rez-Var-06}
Kosygina, E., Rezakhanlou, F., Varadhan, S.R.S.: Stochastic homogenization of
  {H}amilton-{J}acobi-{B}ellman equations.
\newblock Comm. Pure Appl. Math. \textbf{59}(10), 1489--1521 (2006)

\bibitem{Kos-Var-08}
Kosygina, E., Varadhan, S.R.S.: Homogenization of {H}amilton-{J}acobi-{B}ellman
  equations with respect to time-space shifts in a stationary ergodic medium.
\newblock Comm. Pure Appl. Math. \textbf{61}(6), 816--847 (2008)

\bibitem{Lig-05}
Liggett, T.M.: Interacting particle systems.
\newblock Classics in Mathematics. Springer-Verlag, Berlin (2005).
\newblock Reprint of the 1985 original

\bibitem{Liu-Wat-09}
Liu, Q., Watbled, F.: Exponential inequalities for martingales and asymptotic
  properties of the free energy of directed polymers in a random environment.
\newblock Stochastic Process. Appl. \textbf{119}(10), 3101--3132 (2009)

\bibitem{Mar-02}
Martin, J.B.: Linear growth for greedy lattice animals.
\newblock Stochastic Process. Appl. \textbf{98}(1), 43--66 (2002)

\bibitem{Mor-10-}
{Moreno Flores}, G.R.: Asymmetric directed polymers in random environments
  (2010).
\newblock \urlprefix\url{http://arxiv.org/abs/1009.5576}.
\newblock Preprint

\bibitem{Mou-12}
Mourrat, J.C.: Lyapunov exponents, shape theorems and large deviations for the
  random walk in random potential.
\newblock ALEA Lat. Am. J. Probab. Math. Stat. \textbf{9}, 165--209 (2012)

\bibitem{Piz-97}
Piza, M.S.T.: Directed polymers in a random environment: some results on
  fluctuations.
\newblock J. Statist. Phys. \textbf{89}(3-4), 581--603 (1997)

\bibitem{Ras-Sep-07}
Rassoul-Agha, F., Sepp{{\"a}}l{{\"a}}inen, T.: Quenched invariance principle
  for multidimensional ballistic random walk in a random environment with a
  forbidden direction.
\newblock Ann. Probab. \textbf{35}(1), 1--31 (2007)

\bibitem{Ras-Sep-10-ldp-}
Rassoul-Agha, F., Sepp\"al\"ainen, T.: A course on large deviation theory with
  an introduction to {G}ibbs measures (2010).
\newblock
  \urlprefix\url{http://www.math.utah.edu/~firas/Papers/rassoul-seppalainen-ldp.pdf}.
\newblock Preprint

\bibitem{Ras-Sep-11}
Rassoul-Agha, F., Sepp{{\"a}}l{{\"a}}inen, T.: Process-level quenched large
  deviations for random walk in random environment.
\newblock Ann. Inst. Henri Poincar{\'e} Probab. Stat. \textbf{47}(1), 214--242
  (2011)

\bibitem{Ras-Sep-12-arxiv}
Rassoul-Agha, F., Sepp{\"a}l{\"a}inen, T.: Quenched point-to-point free energy
  for random walks in random potentials.
\newblock {\tt arXiv:1202.2584}, Version 1  (2012).
\newblock \urlprefix\url{http://arxiv.org/abs/1202.2584}

\bibitem{Ras-Sep-Yil-12-}
Rassoul-Agha, F., Sepp\"al\"ainen, T., Yilmaz, A.: Quenched large deviations
  for random walks in random environments and random potentials.
\newblock Comm. Pure Appl. Math.  (2012).
\newblock To appear

\bibitem{Roc-70}
Rockafellar, R.T.: Convex analysis.
\newblock Princeton Mathematical Series, No. 28. Princeton University Press,
  Princeton, N.J. (1970)

\bibitem{Ros-06}
Rosenbluth, J.M.: Quenched large deviation for multidimensional random walk in
  random environment: {A} variational formula.
\newblock ProQuest LLC, Ann Arbor, MI (2006).
\newblock \urlprefix\url{http://search.proquest.com/docview/305292045}.
\newblock Thesis (Ph.D.)--New York University

\bibitem{Sep-12-}
Sepp{\"a}l{\"a}inen, T.: Scaling for a one-dimensional directed polymer with
  boundary conditions.
\newblock Ann. Probab. \textbf{40}(1), 19--73 (2012)

\bibitem{Szn-94}
Sznitman, A.S.: Shape theorem, {L}yapounov exponents, and large deviations for
  {B}rownian motion in a {P}oissonian potential.
\newblock Comm. Pure Appl. Math. \textbf{47}(12), 1655--1688 (1994)

\bibitem{Szn-98}
Sznitman, A.S.: Brownian motion, obstacles and random media.
\newblock Springer Monographs in Mathematics. Springer-Verlag, Berlin (1998)

\bibitem{Szn-04}
Sznitman, A.S.: Topics in random walks in random environment.
\newblock In: School and {C}onference on {P}robability {T}heory, ICTP Lect.
  Notes, XVII, pp. 203--266 (electronic). Abdus Salam Int. Cent. Theoret.
  Phys., Trieste (2004)

\bibitem{Var-03-cpam}
Varadhan, S.R.S.: Large deviations for random walks in a random environment.
\newblock Comm. Pure Appl. Math. \textbf{56}(8), 1222--1245 (2003).
\newblock Dedicated to the memory of J{{\"u}}rgen K. Moser

\bibitem{Yil-09-aop}
Yilmaz, A.: Large deviations for random walk in a space-time product
  environment.
\newblock Ann. Probab. \textbf{37}(1), 189--205 (2009)

\bibitem{Yil-09-cpam}
Yilmaz, A.: Quenched large deviations for random walk in a random environment.
\newblock Comm. Pure Appl. Math. \textbf{62}(8), 1033--1075 (2009)

\bibitem{Zei-04}
Zeitouni, O.: Random walks in random environment.
\newblock In: Lectures on probability theory and statistics, \emph{Lecture
  Notes in Math.}, vol. 1837, pp. 189--312. Springer, Berlin (2004)

\bibitem{Zer-98-aap}
Zerner, M.P.W.: Directional decay of the {G}reen's function for a random
  nonnegative potential on {${\bf Z}^d$}.
\newblock Ann. Appl. Probab. \textbf{8}(1), 246--280 (1998)

\bibitem{Zer-98-aop}
Zerner, M.P.W.: Lyapounov exponents and quenched large deviations for
  multidimensional random walk in random environment.
\newblock Ann. Probab. \textbf{26}(4), 1446--1476 (1998)

\end{thebibliography}

\end{document}